\newcommand{\al}{\alpha}
\newcommand{\wh}{\widehat}
\newcommand{\ot}{\otimes}
\newcommand{\C}{\mathbb{C}}
\newcommand{\N}{\mathbb N}
\newcommand{\Z}{\mathbb Z}
\newcommand{\ba}{\begin {eqnarray}}
\newcommand{\ea}{\end {eqnarray}}
\newcommand{\baa}{\begin {eqnarray*}}
\newcommand{\eaa}{\end {eqnarray*}}
\newcommand{\be}{\begin {equation}}
\newcommand{\ee}{\end {equation}}
\newcommand{\bee}{\begin {equation*}}
\newcommand{\eee}{\end {equation*}}
\newcommand{\U}{\mathcal{U}}
\def\({\left(}
\def\){\right)}
\newlength{\dhatheight}
\def \<{{\langle}}
\def \>{{\rangle}}
\theoremstyle{Theorem}
\theoremstyle{Theorem}
\newtheorem{thm}{Theorem}[section]
\newtheorem{lemt}[thm]{Lemma}
\newtheorem{prpt}[thm]{Proposition}
\newtheorem{dfnt}[thm]{Definition}
\numberwithin{equation}{section}
\title[Toroidal Howe dual pairs]{Howe duality in the toroidal setting}
\author{Fulin Chen$^1$}
\address{School of Mathematical Sciences, Xiamen University,
 Xiamen, China 361005}
  \email{chenf@xmu.edu.cn}
\thanks{1. Partially supported by China NSF grant (Nos. 11971397, 12161141001)}
\author{Xin Huang}
\address{School of Mathematical Sciences, Xiamen University,
 Xiamen, China 361005}
 \email{xinhuang2@stu.xmu.edu.cn}
 \author{Shaobin Tan$^2$}
  \address{School of Mathematical Sciences, Xiamen University,
 Xiamen, China 361005}
  \email{tans@xmu.edu.cn}
  \thanks{{3. Partially supported by China NSF grant (No. 12131018)}}
\subjclass[2020]{Primary 17B67}
\date{}
\keywords{Toroidal Lie algebra, quantum torus, Howe duality}
\dedicatory{}
\begin{document}

\begin{abstract}
In this paper, we construct and study various dual pairs acting on the oscillator modules of the
 symplectic toroidal Lie algebras coordinated by irrational quantum tori.
 This extends the classical Howe dual pairs to the toroidal setup.
\end{abstract}

\maketitle

\section{Introduction}
The main goal of this paper is to generalize the classical Howe dual pairs acting
on the oscillator modules of the complex symplectic  Lie algebras to the toroidal setup.
It is well-known (\cite{H}) that there are  two types of irreducible reductive dual pairs in the complex symplectic group $\mathrm{Sp}_{2N}$:
the type I  dual pair $(\mathrm{O}_n,\mathrm{Sp}_{2m})$  and the type II  dual pair $(\mathrm{GL}_n,\mathrm{GL}_m)$,
where  $N,n,m$ are  positive integers such that $N=mn$.
By taking differentials, they lead two  pairs  $(\mathfrak{so}_n,\mathfrak{sp}_{2m})$ and $(\mathfrak{gl}_n, \mathfrak{gl}_{m})$ of
maximally commutating subalgebras  in the
complex symplectic Lie algebra $\mathfrak{sp}_{2N}$.
Denote by $\mathcal{F}_N$ the usual oscillator  $\mathfrak{sp}_{2N}$-module, namely, the space of  polynomial functions on the standard
$\mathfrak{sp}_{2N}$-module $\C^N$.
Via the embedding $\mathfrak{so}_n\hookrightarrow \mathfrak{sp}_{2N}$, the
 oscillator  $\mathfrak{sp}_{2N}$-module $\mathcal{F}_N$ becomes an $\mathfrak{so}_n$-module,
 which can be integrated to an $\mathrm{SO}_n$-module and extended to a locally regular $\mathrm{O}_n$-module.
 Then $(\mathrm{O}_n,\mathfrak{sp}_{2m})$ forms a Howe dual pair on $\mathcal{F}_N$ in the sense that
 \begin{itemize}
 \item each multiplicity space $\mathrm{Hom}_{\mathrm{O}_n}(W_i,\mathcal{F}_N)$ for $i\in I$ is an irreducible $\mathfrak{sp}_{2m}$-module, and
 \item for $i,j\in I$,
$\mathrm{Hom}_{\mathrm{O}_n}(W_i,\mathcal{F}_N)\cong \mathrm{Hom}_{\mathrm{O}_n}(W_j,\mathcal{F}_N)$ if and only if $i=j$,
\end{itemize}
where $\{W_i\}_{i\in I}$ is the set of all non-isomorphic irreducible regular $\mathrm{O}_n$-submodules in $\mathcal{F}_N$.
The above action of the symplectic Lie algebra $\mathfrak{sp}_{2m}$  does not integrate to a
symplectic group representation in general. To obtain an $\mathrm{Sp}_{2m}$-module, one needs to  replace the space of polynomial functions by the
space of Schwartz functions (the so-called  smooth  oscillator module).
On the other hand, via the embedding $\mathfrak{gl}_n\hookrightarrow \mathfrak{sp}_{2N}$, $\mathcal{F}_N$ becomes a $\mathfrak{gl}_n$-module
and integrates to a $\mathrm{GL}_n$-module.
Up to a central action if necessary, $\mathcal{F}_N$ is a locally regular $\mathrm{GL}_n$-module and $(\mathrm{GL}_n,\mathfrak{gl}_m)$ forms a Howe dual pair on $\mathcal{F}_N$.
One may see \cite{H1,H2} for details.

Let $q$ be a generic complex number, and let $\C_q=\C_q[t_0^{\pm1},t_1^{\pm 1}]$ be the quantum torus associated to $q$.
Similar to the construction of finite dimensional classical Lie algebras,
one can define the classical toroidal Lie algebras $\mathfrak{gl}_N(\C_q)$, $\mathfrak{so}_N(\C_q)$ and $\mathfrak{sp}_{2N}(\C_q)$
over $\C_q$.
Following \cite{BGK,CG}, there are one-dimensional non-trivial central extensions,  say
 $\widehat{\mathfrak{gl}}_N(\C_q)$, $\widehat{\mathfrak{so}}_N(\C_q)$ and $\widehat{\mathfrak{sp}}_{2N}(\C_q)$,
 of these classical toroidal Lie algebras.
Motivated by the pairs $(\mathfrak{so}_n,\mathfrak{sp}_{2m})$ and $(\mathfrak{gl}_n, \mathfrak{gl}_{m})$  in $\mathfrak{sp}_{2N}$, we prove in Proposition \ref{prop:dualpairs} that there are three pairs  $(\mathfrak{so}_n,\mathfrak{sp}_{2m}(\C_q))$,
$(\mathfrak{sp}_{2n},\mathfrak{so}_m(\C_q))$  and
$(\mathfrak{gl}_n,\mathfrak{gl}_m(\C_q))$ of maximally commutating subalgebras in $\mathfrak{sp}_{2N}(\C_q)$.
This in turn provides the following pairs of  commutating  subalgebras in $\widehat{\mathfrak{sp}}_{2N}(\C_q)$:
\begin{equation}\label{eq:subalgpairs}
(\mathfrak{so}_n,\widehat{\mathfrak{sp}}_{2m}(\C_q)),\qquad(\mathfrak{sp}_{2n},\widehat{\mathfrak{so}}_m(\C_q)),\qquad
(\mathfrak{gl}_n,\widehat{\mathfrak{gl}}_m(\C_q)).
\end{equation}

The Fock spaces we use to construct various toroidal dual pairs are the oscillator modules $\mathcal{F}_N(\bm{Z})$ for the affine symplectic Lie algebra
constructed in
\cite{FF}, where $\bm{Z}=\Z$ or $\Z+\frac{1}{2}$.
It is remarkable that the affine symplectic Lie algebra action on  $\mathcal{F}_N(\bm{Z})$ can be extended to an action for
the toroidal symplectic Lie algebra $\widehat{\mathfrak{sp}}_{2N}(\C_q)$ (see Proposition \ref{le4}). When $\bm{Z}=\Z$, this
oscillator $\widehat{\mathfrak{sp}}_{2N}(\C_q)$-module $\mathcal{F}_N(\bm{Z})$ was  obtained in \cite{CG}.

Just like the finite dimensional case, via the first pair in \eqref{eq:subalgpairs}, $\mathcal{F}_N(\bm{Z})$ can be integrated to an $\mathrm{SO}_n$-module and extended to a locally
regular $\mathrm{O}_n$-module. Furthermore, $(\mathrm{O}_n,\widehat{\mathfrak{sp}}_{2m}(\C_q))$ forms a Howe dual pair on $\mathcal{F}_N(\bm{Z})$.
Similarly, when $\bm{Z}=\Z$, the $\mathfrak{sp}_{2n}$-action on $\mathcal{F}_N(\bm{Z})$ does not integrate to an $\mathrm{Sp}_{2n}$-module
through the second pair in
\eqref{eq:subalgpairs}.
However, when $\bm{Z}=\Z+\frac{1}{2}$, $\mathcal{F}_N(\bm{Z})$ integrates to a locally regular $\mathrm{Sp}_{2n}$-module and
$(\mathrm{Sp}_{2n},\widehat{\mathfrak{so}}_{m}(\C_q))$ forms a Howe dual pair on $\mathcal{F}_N(\bm{Z})$.
In addition, by using the third pair in \eqref{eq:subalgpairs}, the $\mathfrak{gl}_n$-action on $\mathcal{F}_N(\bm{Z})$ integrates to a group action and,
up to a central action when $\bm{Z}=\Z$,
$(\mathrm{GL}_n,\widehat{\mathfrak{gl}}_m(\C_q))$ forms a Howe dual pair on $\mathcal{F}_N(\bm{Z})$.

As a main result of this paper, we completely determine  the multiplicity-free decomposition of $\mathcal{F}_N(\bm{Z})$ with respect  to the above
Howe
dual pairs (see Theorem \ref{thm:main}).
We also obtain the explicit formula for the joint highest weight vectors of each toroidal dual pair inside $\mathcal{F}_N(\bm{Z})$ and calculate the
corresponding highest
weights (see Propositions \ref{pr:dualpair1}, \ref{pr:dualpair2} and \ref{pr:dualpair3}).
It turns out that all the irreducible regular modules for the complex classical group $\mathrm{O}_n$, $\mathrm{Sp}_{2n}$ or $\mathrm{GL}_n$
are appeared in  $\mathcal{F}_N(\bm{Z})$.
Thus, we present  a duality between all irreducible regular modules for the classical group
$\mathrm{O}_n$, $\mathrm{Sp}_{2n}$ or $\mathrm{GL}_n$ and certain irreducible highest weight modules for the classical
toroidal Lie algebra $\widehat{\mathfrak{sp}}_{2m}(\C_q)$, $\widehat{\mathfrak{so}}_m(\C_q)$ or $\widehat{\mathfrak{gl}}_m(\C_q)$.

Note that the skew Howe duality for general linear and orthogonal Lie groups were generalized to  the toroidal setups in \cite{CLTW} and \cite{CHT}, respectively.
In \cite{CLTW},  by using the reciprocity law attached to the seesaw  pairs, various branching laws for
irreducible integrable highest weight $\widehat{\mathfrak{gl}}_N(\C_q)$-modules were
obtained.
By replacing the duality for $\widehat{\mathfrak{gl}}_N(\C_q)$ in \cite{CLTW} by those toroidal Howe dual pairs given above,
one can also get  various branching laws for certain irreducible  highest weight modules of classical toroidal Lie algebras.
We also remark that the Howe dualities between finite dimensional Lie groups and  completed infinite-rank affine Kac-Moody algebras (or classical Lie subalgebras of
$\mathcal{W}_{1+\infty}$) have been extensively studied (see \cite{FKRW,KR,KWY,W1,W2} for examples).

The paper is organized as follows. In Section 2, we recall the classical toroidal Lie algebras  coordinated by $\mathbb{C}_q$
as well as their one-dimensional central extensions.
In section 3, we review the irreducible regular modules of complex classical groups and introduce a notion of
 irreducible highest weight modules for toroidal Lie algebras.
 In Section 4, we state the main result of this paper, and the proof is presented in Section 5.

For this paper, we work on the field
$\mathbb{C}$ of complex numbers, and we denote by $\mathbb{C}^{\times}$, $\mathbb{Z}$ and $\mathbb{N}$  the sets of nonzero complex numbers, integers and nonnegative integers, respectively.

\section{Toroidal dual pairs}
Throughout this paper, let $N$ be a positive integer.
In this section, we generalize the classical reductive dual pairs in complex symplectic Lie algebra $\mathfrak{sp}_{2N}$  to the toroidal
setting.
\subsection{Reductive dual pairs} We begin by introducing  some standard notations  on the  complex classical groups and Lie algebras for later use.
Namely,
\begin{itemize}
\item the complex general linear group $\mathrm{GL}_{N}:=\{\text{all  $N\times N$ invertible matrices over $\C$}$\},
\item the complex orthogonal group $\mathrm{O}_N:=\{A\in \mathrm{GL}_N \mid J_N A^t J_N=A\}$,
\item the complex special orthogonal group $\mathrm{SO}_N:=\{A\in \mathrm{O}_N\mid \det(A)=1\}$,
\item the complex symplectic group $\mathrm{Sp}_{2N}:=\{A\in \mathrm{GL}_{2N}\mid J_{2N}^{\prime} A^t J_{2N}^{\prime}=A\}$,
\item the complex general linear Lie algebra $\mathfrak{gl}_{N}:=\{\text{all  $N\times N$  matrices over $\C$}$\},
\item the complex orthogonal Lie algebra $\mathfrak{so}_N:=\{A\in \mathfrak{gl}_N \mid J_NA+A^t J_N=0\}$,
\item the complex symplectic Lie algebra $\mathfrak{sp}_{2N}:=\{A\in \mathfrak{gl}_{2N}\mid J_{2N}^{\prime}A+ A^t J_{2N}^{\prime}=0\}$,
\end{itemize}
where $J_N=\sum_{i=1}^{N}E_{i,N+1-i}\in \textrm{GL}_{N}$,
$J_{2N}^{\prime}=\sum_{i=1}^{N}(E_{i,N+i}-E_{N+i,i})\in \textrm{GL}_{2N}$  and $E_{i,j}$'s denote the elementary matrices.

The orthogonal  Lie algebra $\mathfrak{so}_N$ is spanned by the elements
\[e_{i,j}:=E_{i,j}-E_{N+1-j,N+1-i}\]
for $ 1\le i,j\le N$,
and the  symplectic Lie algebra $\mathfrak{sp}_{2N}$ is spanned by the elements
\[f_{i,j}:=E_{i,j}-E_{N+j,N+i},\quad g_{i,j}:=E_{i,N+j}+E_{j,N+i},\quad h_{i,j}:=-E_{N+i,j}-E_{N+j,i}\]
for $ 1\le i,j\le N$.

For a subgroup $B$ of a group $A$, let
\[\mathrm{Comm}_B(A):=\{a\in A\mid ab=ba\ \text{for all}\ b\in B\}\]
be the commutant of $B$ in $A$.
Following \cite{H}, a reductive dual pair $(G,G')$ in $\mathrm{Sp}_{2N}$ is a pair of subgroups $G$ and $G'$ of $\mathrm{Sp}_{2N}$ such that
$G$ and $G'$ are reductive, $\mathrm{Comm}_G(\mathrm{Sp}_{2N})=G'$ and $\mathrm{Comm}_{G'}(\mathrm{Sp}_{2N})=G$.
It is well-known (\cite{H}) that there are exactly two types of irreducible reductive dual pairs in $\mathrm{Sp}_{2N}$:
the type I  dual pair $(\mathrm{O}_n,\mathrm{Sp}_{2m})$  and the type II  dual pair $(\mathrm{GL}_n,\mathrm{GL}_m)$.
Here and henceforth, $m,n$ are two positive
integers such that
\[N=mn.\]

In the language of Lie algebras, this yields the dual pairs  $(\mathfrak{so}_n,\mathfrak{sp}_{2m})$ and $(\mathfrak{gl}_n, \mathfrak{gl}_{m})$ in the
 symplectic Lie algebra $\mathfrak{sp}_{2N}$.
 Here, by a dual pair in a complex Lie algebra $L$, we mean  a pair $(K,K')$ of subalgebras of $L$ such that $\mathrm{Cent}_K(L)=K'$ and $\mathrm{Cent}_{K'}(L)=K$,
where
 \[\mathrm{Cent}_K(L):=\{a\in L\mid [a,b]=0\ \text{for all}\ b\in K\}\]
is the centralizer of $K$ in $L$.
Explicitly, for $1\leq i\leq m$, $1\leq r\leq n$, set
\begin{equation}\label{eq:pi}
 \pi(i,r):=i+(r-1)m.
 \end{equation}
Then $\mathfrak{so}_n$ and $\mathfrak{sp}_{2m}$ are viewed as  subalgebras of $\mathfrak{sp}_{2N}$ through the embeddings
\begin{eqnarray*}
\mathfrak{so}_n\hookrightarrow \mathfrak{sp}_{2N},\quad &&e_{p,s}\mapsto \sum_{k=1}^m f_{\pi(k,p),\pi(k,s)}-f_{\pi(k,n+1-s),\pi(k,n+1-p)},\\
\mathfrak{sp}_{2m}\hookrightarrow \mathfrak{sp}_{2N},\quad &&f_{i,j}\mapsto \sum_{k=1}^n f_{\pi(i,k),\pi(j,k)},\
g_{i,j}\mapsto \sum_{k=1}^n g_{\pi(i,k),\pi(j,n+1-k)},\\
&& h_{i,j}\mapsto \sum_{k=1}^n h_{\pi(i,k),\pi(j,n+1-k)},
\end{eqnarray*}
where $1\le p,s\le n$ and $1\le i,j\le m$.
And,  $\mathfrak{gl}_n$ and $\mathfrak{gl}_{m}$ are  viewed as subalgebras of $\mathfrak{sp}_{2N}$ through the embeddings
\begin{eqnarray*}
&&\mathfrak{gl}_n\hookrightarrow \mathfrak{sp}_{2N},\quad E_{p,s}\mapsto \sum_{k=1}^m f_{\pi(k,p),\pi(k,s)},\\
&&\mathfrak{gl}_m\hookrightarrow \mathfrak{sp}_{2N},\quad E_{i,j}\mapsto \sum_{k=1}^n f_{\pi(i,k),\pi(j,k)},
\end{eqnarray*}
where $1\le p,s\le n$ and $1\le i,j\le m$.

\subsection{Toroidal Lie algebras coordinated by quantum tori}
Throughout this paper, let $q$ be a complex number, which is not a root of unity.
Denote by $\mathbb{C}_q$ the quantum torus associated to $q$. By definition, $\mathbb{C}_q=\mathbb{C}[t_0^{\pm 1},t_1^{\pm 1}]$ as a vector space and the multiplication
is given by
 $$(t_0^{a}t_1^{b})\cdot (t_0^{c}t_1^{d})=q^{bc}t_0^{a+c}t_1^{b+d}$$
for $a,b,c,d\in\mathbb{Z}$. Let $\ \bar{}\ $ be the anti-involution of $\mathbb{C}_q$ determined by
\begin{equation*}
\bar{t}_0=t_0,\quad \bar{t}_1=t_{1}^{-1}.
\end{equation*}
Then we have
\[\overline{t_0^at_1^b}=q^{-ab}t_0^at_1^{-b}\] for $a,b\in\mathbb{Z}$.

We introduce the following toroidal analogies of classical Lie algebras:
\begin{itemize}
\item the general linear toroidal Lie algebra  $\mathfrak{gl}_N(\mathbb{C}_q):=\{(a_{i,j})_{1\leq i,j\leq N}\mid a_{i,j}\in \C_q\}$,
\item the orthogonal toroidal Lie algebra $\mathfrak{so}_N(\mathbb{C}_q):=\{A\in \mathfrak{gl}_N(\mathbb{C}_q)\mid J_NA+\bar{A}^{t}J_N=0\}$,
\item the symplectic toroidal Lie algebra $\mathfrak{sp}_{2N}(\mathbb{C}_q):=\{A\in \mathfrak{gl}_{2N}(\mathbb{C}_q)\mid J_{2N}^{\prime}A+\bar{A}^{t}J_{2N}^{\prime}=0\}$.
\end{itemize}
 Here, for a matrix $A=(a_{ij})\in \mathfrak{gl}_N(\C_q)$, $\bar{A}^t$ stands for the transpose matrix of $\bar{A}=(\overline{a_{ij}})$.
 We will often identify $\mathfrak{gl}_N$ (resp.\,$\mathfrak{so}_N$; resp.\,$\mathfrak{sp}_{2N}$) as a subalgebra of
$\mathfrak{gl}_N(\C_q)$ (resp.\,$\mathfrak{so}_N(\C_q)$; resp.\,$\mathfrak{sp}_{2N}(\C_q)$)
in an obvious way.

Note that the Lie algebra $\mathfrak{so}_N(\mathbb{C}_q)$ is spanned by the elements
\[e_{i,j}(a,b):=E_{i,j}t_0^at_1^b-E_{N+1-j,N+1-i}\overline{t_0^at_1^b}  \]
for $1\leq i,j\leq N, a,b\in\mathbb{Z}$.
Here,  the notation $E_{i,j}t_0^at_1^b$ denotes the matrix whose only nonzero entry is the $(i,j)$-entry which is $t_0^at_1^b$.
Meanwhile, for $1\leq i,j\leq N$, $a,b\in\mathbb{Z}$, form the following elements
\begin{equation*}
  \begin{split}
       &  f_{i,j}(a,b):=E_{i,j}t_0^at_1^b-E_{N+j,N+i}\overline{t_0^at_1^b}, \\
       &  g_{i,j}(a,b):=E_{i,N+j}t_0^at_1^b+E_{j,N+i}\overline{t_0^at_1^b}, \\
       &  h_{i,j}(a,b):=-E_{N+i,j}t_0^at_1^b-E_{N+j,i}\overline{t_0^at_1^b},  \\
  \end{split}
\end{equation*}
which span the Lie algebra $\mathfrak{sp}_{2N}(\mathbb{C}_q)$.

Consider the  $1$-dimensional central extension (cf.\,\cite{BGK})
 \[\widehat{\mathfrak{gl}}_N(\mathbb{C}_q):=\mathfrak{gl}_N(\mathbb{C}_q)\oplus \mathbb{C}\bm{c}\]
of $\mathfrak{gl}_N(\mathbb{C}_q)$, where $\bm c$ is central  and
\begin{eqnarray*}
   &&[E_{i,j}t_0^a t_1^b,E_{k,l}t_0^c t_1^d]\\
   &=&\delta_{j,k}q^{bc}E_{i,l}t_0^{a+c}t_1^{b+d}-\delta_{i,l}q^{ad}E_{k,j}t_0^{a+c}t_1^{b+d}+\frac{1}{2}\delta_{j,k}\delta_{i,l}\delta_{a+c,0}\delta_{b+d,0}q^{bc}a\bm{c}
\end{eqnarray*}
for $1\leq i,j,k,l\leq N$ and $a,b,c,d\in\mathbb{Z}$.
Similarly, we have the $1$-dimensional central extension
\[
  \widehat{\mathfrak{so}}_N(\mathbb{C}_q):=\mathfrak{so}_N(\mathbb{C}_q)\oplus \mathbb{C}\bm c\subset \widehat{\mathfrak{gl}}_N(\mathbb{C}_q)
\]
of $\mathfrak{so}_N(\mathbb{C}_q)$, as well as the $1$-dimensional central extension
\[\widehat{\mathfrak{sp}}_{2N}(\mathbb{C}_q):=\mathfrak{sp}_{2N}(\mathbb{C}_q)\oplus \mathbb{C}\bm c\subset \widehat{\mathfrak{gl}}_{2N}(\mathbb{C}_q)
\] of $\mathfrak{sp}_{2N}(\mathbb{C}_q)$.

\subsection{Toroidal dual pairs}
Motivated by the classical dual pairs in $\mathfrak{sp}_{2N}$, we introduce  the following  Lie algebra embeddings
(into $\mathfrak{sp}_{2N}(\C_q)$):

\begin{equation}\label{eq:dualpair1}\begin{split}&\mathfrak{gl}_n\hookrightarrow \mathfrak{sp}_{2N}(\mathbb{C}_q),\quad
 E_{p,s}\mapsto \sum_{k=1}^{m}f_{\pi(k,p),\pi(k,s)},\\
&\mathfrak{gl}_m(\C_q)\hookrightarrow \mathfrak{sp}_{2N}(\mathbb{C}_q),\quad
 E_{i,j}t_0^a t_1^b\mapsto \sum_{k=1}^{n}f_{\pi(i,k),\pi(j,k)}(a,b),
 \end{split}
 \end{equation}
  \begin{equation}\label{eq:dualpair2}\begin{split}
&\mathfrak{so}_n\hookrightarrow \mathfrak{sp}_{2N}(\mathbb{C}_q),\quad e_{p,s}\mapsto
\sum_{k=1}^{m}(f_{\pi(k,p),\pi(k,s)}-f_{\pi(k,n+1-s),\pi(k,n+1-p)}),\\
&\mathfrak{sp}_{2m}(\C_q)\hookrightarrow \mathfrak{sp}_{2N}(\mathbb{C}_q),\quad \begin{cases}& f_{i,j}(a,b)\mapsto\sum_{k=1}^{n}f_{\pi(i,k),\pi(j,k)}(a,b)\\
& g_{i,j}(a,b)\mapsto \sum_{k=1}^{n}g_{\pi(i,k),\pi(j,n+1-k)}(a,b)\\
& h_{i,j}(a,b)\mapsto \sum_{k=1}^{n}h_{\pi(i,k),\pi(j,n+1-k)}(a,b)
 \end{cases},
 \end{split}
 \end{equation}
 \begin{equation}\label{eq:dualpair3}\begin{split}
&\mathfrak{sp}_{2n}\hookrightarrow \mathfrak{sp}_{2N}(\mathbb{C}_q),\quad \begin{cases}& f_{p,s}\mapsto\sum_{k=1}^{m}f_{\pi(k,p),\pi(k,s)}\\
& g_{p,s}\mapsto \sum_{k=1}^{m}g_{\pi(k,p),\pi(m+1-k,s)}\\
& h_{p,s}\mapsto \sum_{k=1}^{m}h_{\pi(k,p),\pi(m+1-k,s)}
 \end{cases},\\
 &\mathfrak{so}_m(\C_q)\hookrightarrow \mathfrak{sp}_{2N}(\mathbb{C}_q),\quad e_{i,j}(a,b)
\mapsto \sum_{k=1}^{n}(f_{\pi(i,k),\pi(j,k)}(a,b)-q^{-ab}\\
&\qquad\qquad\qquad\qquad\qquad\qquad\qquad\qquad\cdot f_{\pi(m+1-j,k),\pi(m+1-i,k)}(a,-b)),
 \end{split}
 \end{equation}
where $1\le p,s\le n$, $1\le i,j\le m$ and $a,b\in \Z$,
recalling  that $m,n,N$ are positive integers such that $mn=N$.

The following results generalize the classical dual pairs in $\mathfrak{sp}_{2N}$ (see \S\,2.1).

\begin{prpt}\label{prop:dualpairs} Let $N,m,n$ be positive integers such that $mn=N$.
Then, via the embeddings \eqref{eq:dualpair1}-\eqref{eq:dualpair3}, $(\mathfrak{gl}_n,\mathfrak{gl}_m(\C_q))$, $(\mathfrak{so}_n,\mathfrak{sp}_{2m}(\C_q))$ and
$(\mathfrak{sp}_{2n},\mathfrak{so}_m(\C_q))$ are dual pairs in $\mathfrak{sp}_{2N}(\C_q)$.
\end{prpt}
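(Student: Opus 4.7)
My plan is, for each alleged dual pair $(K, K')$, to prove three things in turn: (a) the indicated maps are Lie algebra embeddings; (b) the two images commute inside $\mathfrak{sp}_{2N}(\C_q)$; and (c) each image equals the full centralizer of the other. I outline the argument for $(\mathfrak{gl}_n, \mathfrak{gl}_m(\C_q))$; the other two are completely parallel.

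For (a) and (b), the image of $\mathfrak{gl}_n$ sits entirely in the bidegree-$(0,0)$ part of $\mathfrak{sp}_{2N}(\C_q)$, so the homomorphism property is inherited from the classical embedding $\mathfrak{gl}_n \hookrightarrow \mathfrak{sp}_{2N}$; the embedding of $\mathfrak{gl}_m(\C_q)$ requires a bracket check using the identity $[A t_0^a t_1^b, B t_0^c t_1^d] = (q^{bc}AB - q^{ad}BA)\,t_0^{a+c}t_1^{b+d}$ in $\mathfrak{gl}_{2N}(\C_q)$. The commutation $[\mathfrak{gl}_n, \mathfrak{gl}_m(\C_q)]=0$ reduces, because $\mathfrak{gl}_n$ has trivial bidegree, to the classical fact that $\mathfrak{gl}_n$ and $\mathfrak{gl}_m$ commute in $\mathfrak{sp}_{2N}$ (the classical Howe dual pair).

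The substance is (c). I would use the $\Z$-grading of $\mathfrak{sp}_{2N}(\C_q)$ by $t_0$-degree (which is preserved by the anti-involution). Writing out $J'_{2N}A + \overline{A}^t J'_{2N}=0$, the relation couples bidegree $(a,b)$ with $(a,-b)$, so a basis for the $t_0$-degree-$a$ piece consists of $\{A_0 t_0^a : A_0\in\mathfrak{sp}_{2N}\}$ together with $\{X t_0^a t_1^b - q^{-ab}(J'_{2N})^{-1}X^tJ'_{2N}\,t_0^a t_1^{-b} : X\in\mathfrak{gl}_{2N},\ b>0\}$. A short computation shows that, under the adjoint action of $\mathfrak{sp}_{2N}\otimes 1$, the $(a,0)$-summands are isomorphic to the adjoint $\mathfrak{sp}_{2N}$-module $\mathfrak{sp}_{2N}$, and each $(a,b)$-summand with $b>0$ is isomorphic (via $X$) to the adjoint $\mathfrak{sp}_{2N}$-module $\mathfrak{gl}_{2N}$. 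Hence taking $\mathfrak{gl}_n$-invariants summand-by-summand reduces to the classical problem: each $(a,0)$-piece contributes $\mathrm{Cent}_{\mathfrak{gl}_n}(\mathfrak{sp}_{2N}) = \mathfrak{gl}_m$ by the classical Howe theorem, and each $(a,b)$ with $b>0$ contributes $\mathrm{Cent}_{\mathfrak{gl}_n}(\mathfrak{gl}_{2N})$, which via $\C^{2N} \cong (\C^n\otimes\C^m)\oplus((\C^n)^*\otimes\C^m)$ has dimension $2m^2$ by Schur's lemma. Matching these dimensions against the image of $\mathfrak{gl}_m(\C_q)$ in the corresponding bidegrees and invoking (b) yields $\mathrm{Cent}_{\mathfrak{gl}_n}(\mathfrak{sp}_{2N}(\C_q))=\mathfrak{gl}_m(\C_q)$.

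For the reverse centralizer $\mathrm{Cent}_{\mathfrak{gl}_m(\C_q)}(\mathfrak{sp}_{2N}(\C_q))=\mathfrak{gl}_n$, I would exploit the non-commutativity of $\C_q$: an element of nonzero bidegree $(a,b)$ must commute with $F t_0^c t_1^d$ for every $(c,d)$ appearing in the image of $\mathfrak{gl}_m(\C_q)$, which (using the formula above) forces $q^{bc}YF = q^{ad}FY$; combined with $YF = FY$ (classical), this gives either $(a,b)=(0,0)$ or $YF=0$ for all such $F$, and the latter is impossible since those $F$'s act effectively on $\C^{2N}$. Hence the centralizer is concentrated in bidegree $(0,0)$ and equals the classical $\mathrm{Cent}_{\mathfrak{gl}_m}(\mathfrak{sp}_{2N}) = \mathfrak{gl}_n$. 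The main obstacle is this last step: orchestrating the $q$-constraints to isolate bidegree $(0,0)$ (which uses the hypothesis that $q$ is not a root of unity) and the effective-action claim, uniformly across the three pairs. For $(\mathfrak{so}_n, \mathfrak{sp}_{2m}(\C_q))$ and $(\mathfrak{sp}_{2n}, \mathfrak{so}_m(\C_q))$ one replaces the decomposition above by $\C^{2N}\cong \C^{2m}\otimes\C^n$ (with the first factor symplectic, the second orthogonal) and $\C^{2N}\cong \C^{2n}\otimes\C^m$ (likewise), invoking the classical Howe dual pairs $(\mathfrak{so}_n,\mathfrak{sp}_{2m})$ and $(\mathfrak{sp}_{2n},\mathfrak{so}_m)$ in $\mathfrak{sp}_{2N}$.
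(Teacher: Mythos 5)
Your proposal is correct in substance, but it is worth comparing with what the paper actually does. The paper's proof (given only for $(\mathfrak{gl}_n,\mathfrak{gl}_m(\C_q))$, the other two cases being declared similar) is a direct coefficient comparison: it writes a general element $x=\sum(\varphi^{a,b}_{i,j}f_{i,j}(a,b)+\psi^{a,b}_{i,j}g_{i,j}(a,b)+\chi^{a,b}_{i,j}h_{i,j}(a,b))$ and extracts linear constraints on the coefficients from $[\,\cdot\,,x]=0$, using that $q$ is not a root of unity to kill the components with $(a,b)\neq(0,0)$ in the second centralizer. Your computation of $\mathrm{Cent}_{\mathfrak{gl}_n}(\mathfrak{sp}_{2N}(\C_q))$ is genuinely different and arguably cleaner: the identification of the $t_0$-degree-$a$, $t_1$-degree-$(\pm b)$ piece (for $b>0$) with the adjoint $\mathfrak{sp}_{2N}$-module $\mathfrak{gl}_{2N}$ via $X\mapsto Xt_0^at_1^b-q^{-ab}(J'_{2N})^{-1}X^tJ'_{2N}t_0^at_1^{-b}$ is a correct intertwiner, the invariants have dimension $2m^2$ by Schur, and the image of $\mathfrak{gl}_m(\C_q)$ fills them out; this replaces the paper's explicit coefficient chase by classical Howe plus a dimension count. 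Your argument for $\mathrm{Cent}_{\mathfrak{gl}_m(\C_q)}(\mathfrak{sp}_{2N}(\C_q))=\mathfrak{gl}_n$ is, in mechanism, the same as the paper's ($q^{bc}$ versus $q^{ad}$, $q$ not a root of unity, plus an invertible element coming from the image of the identity of $\mathfrak{gl}_m$), but your write-up has a bookkeeping slip you should repair: the image elements are \emph{not} of the pure form $Ft_0^ct_1^d$ --- because of the anti-involution, the image of $E_{i,j}t_0^ct_1^d$ is $F_1t_0^ct_1^d+F_2t_0^ct_1^{-d}$ --- and a centralizing element need not be $t_1$-homogeneous, since the centralizer of a non-homogeneous family is not automatically graded. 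Both points are fixable (first commute with the constant copy of $\mathfrak{gl}_m$, which is of bidegree $(0,0)$, to reduce to homogeneous components; then separate the $\pm d$ contributions by letting $d$ vary, or compare $q$-powers at two values of $c$ and of $d$ to force $F_iY=YF_i=0$ and hence $Y=0$ when $(a,b)\neq(0,0)$), and the paper's componentwise coefficient comparison handles this automatically, but as written your ``$q^{bc}YF=q^{ad}FY$'' step does not literally apply to the generators you must commute with.

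One further caveat on your claim that the other two pairs are ``completely parallel'': your own Schur count requires $\C^n$ to be irreducible (and self-dual) as a module over the small constant algebra, which fails for $\mathfrak{so}_n$ when $n=2$; there $\mathrm{End}_{\mathfrak{so}_2}(\C^{2N})$ has dimension $8m^2$ rather than $4m^2$, and indeed the centralizer of the one-dimensional $\mathfrak{so}_2$ in $\mathfrak{sp}_{4m}(\C_q)$ is strictly larger than the image of $\mathfrak{sp}_{2m}(\C_q)$ (the classical shadow of the fact that the type I pair needs $\mathrm{O}_2$, not $\mathrm{SO}_2$). The paper's omitted ``similar'' computation has the same blind spot, so this is not a defect of your method relative to the paper's --- in fact your dimension count is the quickest way to see exactly where the parallelism needs a restriction --- but you should not assert the transfer to $(\mathfrak{so}_n,\mathfrak{sp}_{2m}(\C_q))$ without flagging the $n=2$ case.
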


\begin{proof} We will only prove  that, through the embeddings  \eqref{eq:dualpair1},
$(\mathfrak{gl}_n,\mathfrak{gl}_m(\C_q))$ is a dual pair of $\mathfrak{sp}_{2N}(\C_q)$.
The other two cases can be proved in a similar way, and we omit the details.

It is straightforward to see that $\mathfrak{gl}_n$ commutes with $\mathfrak{gl}_{m}(\mathbb{C}_q)$
 (as subalgebras of $\mathfrak{sp}_{2N}(\mathbb{C}_q)$ via \eqref{eq:dualpair1}).
Namely, we have
\begin{equation}\label{eq:glnsubsetcentsp}
\mathfrak{gl}_n\subset \textrm{Cent}_{\mathfrak{sp}_{2N}(\mathbb{C}_q)}(\mathfrak{gl}_{m}(\mathbb{C}_q))\quad\textrm{and}\quad \mathfrak{gl}_{m}(\mathbb{C}_q)\subset \textrm{Cent}_{\mathfrak{sp}_{2N}(\mathbb{C}_q)}(\mathfrak{gl}_n).
\end{equation}
We fix an element
 $$x=\sum_{\substack{1\leq i,j\leq N;\\a,b\in\mathbb{Z}}}(\varphi_{i,j}^{a,b}f_{i,j}(a,b)+\psi_{i,j}^{a,b}g_{i,j}(a,b)+\chi_{i,j}^{a,b}h_{i,j}(a,b))\in \mathfrak{sp}_{2N}(\mathbb{C}_q),$$
where $\varphi_{i,j}^{a,b}, \psi_{i,j}^{a,b}, \chi_{i,j}^{a,b}\in \C$.

If $x\in\textrm{Cent}_{\mathfrak{sp}_{2N}(\mathbb{C}_q)}(\mathfrak{gl}_n)$,
then by comparing the coefficients in the equalities
   $$ [\sum_{k=1}^{m}f_{\pi(k,p),\pi(k,s)}, x]=0\quad (p,s=1,2,\dots,n),$$
we can find that
   $\psi_{i,j}^{a,b}=\chi_{i,j}^{a,b}=0 $ for $1\leq i,j\leq N, ~a,b\in\mathbb{Z},$ and
   $$\varphi_{\pi(k,p),\pi(t,s)}^{a,b}=\delta_{p,s}\varphi_{\pi(k,r),\pi(t,r)}^{a,b}$$
for $1\leq k,t\leq m, ~1\leq p,s,r\leq n,$ and $a,b\in\mathbb{Z}$.
   This together with \eqref{eq:glnsubsetcentsp} gives that
   \[\textrm{Cent}_{\mathfrak{sp}_{2N}(\mathbb{C}_q)}(\mathfrak{gl}_n)= \mathfrak{gl}_{m}(\mathbb{C}_q).\]

Similarly, if  $x\in \textrm{Cent}_{\mathfrak{sp}_{2N}(\mathbb{C}_q)}(\mathfrak{gl}_{m}(\mathbb{C}_q))$,
then by comparing the coefficients in the equalities
 $$[\sum_{k=1}^{n}f_{\pi(i,k),\pi(j,k)}(c,d), x]=0\quad (c,d\in\mathbb{Z},~i,j=1,2,\dots,m,)$$
 we can find that  $\psi_{i,j}^{a,b}=\chi_{i,j}^{a,b}=0$ for $1\leq i,j\leq N, a,b\in\mathbb{Z}$, $\varphi_{\pi(k,p),\pi(t,s)}^{a,b}=0$ for $k\neq t$, $1\leq p,s\leq n$, $a,b\in\mathbb{Z}$,
 and
   $$q^{ad}\varphi_{\pi(k,p),\pi(k,s)}^{a,b}-q^{bc}\varphi_{\pi(t,p),\pi(t,s)}^{a,b}=0$$
   for $1\leq p,s\leq n$, $1\leq k,t\leq m$ and $a,b,c,d\in\mathbb{Z}$.
Since $q$ is not a root of unity, we have $\varphi_{\pi(k,p),\pi(k,s)}^{a,b}=0$ provided that either $a\ne 0$ or $b\ne 0$.
 Together with \eqref{eq:glnsubsetcentsp}, this gives that $\textrm{Cent}_{\mathfrak{sp}_{2N}(\mathbb{C}_q)}(\mathfrak{gl}_{m}(\mathbb{C}_q))= \mathfrak{gl}_n$. Thus we complete the proof.
\end{proof}

\section{Highest weight modules}
In this section, we review the classification of irreducible regular modules for complex classical groups,
and introduce a notion of highest weight module for classical toroidal Lie algebras.

\subsection{Irreducible regular modules for complex classical groups}
In this subsection, we review the classification
of the irreducible (finite dimensional) regular modules for the complex classical groups $\mathrm{GL}_n$, $\mathrm{SO}_n$, $\mathrm{O}_n$, and
$\mathrm{Sp}_{2n}$.
One may see \cite{GW} for details.

Let $\mathrm{H}_{\mathrm{GL}_n}$ be the subgroup of $\mathrm{GL}_n$ consisting of all invertible diagonal matrices,
 and let $\mathrm{N}_{\mathrm{GL}_n}^{+}$ be the subgroup consisting of all unipotent upper-triangular  matrices.
We also write
\begin{eqnarray*} \mathrm{H}_{\mathrm{Sp}_{2n}}=\mathrm{H}_{\mathrm{GL}_{2n}}\cap \mathrm{Sp}_{2n},\qquad
\mathrm{N}_{\mathrm{Sp}_{2n}}^+=\mathrm{N}_{\mathrm{GL}_{2n}}^{+}\cap \mathrm{Sp}_{2n}
\end{eqnarray*}
and
\begin{eqnarray*}
\mathrm{H}_{\mathrm{SO}_{n}}=\mathrm{H}_{\mathrm{GL}_{n}}\cap \mathrm{SO}_{n},\qquad
\mathrm{N}_{\mathrm{SO}_{n}}^+=\mathrm{N}_{\mathrm{GL}_{n}}^{+}\cap \mathrm{SO}_{n}.
\end{eqnarray*}

 Set \[\mathcal{R}(\mathrm{GL}_n):=\{\bm{\mu}=(\mu_1,\mu_2,\dots,\mu_n)\in \Z^n\mid \mu_1\ge \mu_2\ge \cdots\ge \mu_n\}.\]
For every $\bm{\mu}\in \mathcal{R}(\mathrm{GL}_n)$,
denote by $L_{\mathrm{GL}_n}(\bm{\mu})$ the irreducible
regular
$\mathrm{GL}_n$-module generated by a (nonzero) highest weight vector $v_{\bm{\mu}}$ such that
\[\mathrm{N}_{\mathrm{GL}_n}^+.v_{\bm{\mu}}=v_{\bm{\mu}},\qquad h.v_{\bm{\mu}}=h^{\bm{\mu}}v_{\bm{\mu}}\]
for all $h\in \mathrm{H}_{\mathrm{GL}_n}$.
Here and henceforth, for $h=\mathrm{diag}\{h_1,h_2,\dots,h_n\}\in \mathrm{H}_{\mathrm{GL}_n}$ and $\bm{\mu}=
(\mu_1,\mu_2,\dots,\mu_n)\in \Z^n$, we  exploit the notation
\[h^{\bm{\mu}}:=h_1^{\mu_1}h_2^{\mu_2}\cdots h_n^{\mu_n}.\]

Similarly, set
\[ \mathcal{R}(\mathrm{Sp}_{2n}):=\{(\mu_1,\mu_2,\dots,\mu_{2n})\in \Z^{2n}\mid \mu_1\ge \mu_2\ge \cdots\ge \mu_n\ge \mu_{n+1}=\cdots=\mu_{2n}=0\}.\]
For every $\bm{\mu}\in \mathcal{R}(\mathrm{Sp}_{2n})$,
denote by $L_{\mathrm{Sp}_{2n}}(\bm{\mu})$ the irreducible
regular
$\mathrm{Sp}_{2n}$-module generated by a (nonzero) highest weight vector $v_{\bm{\mu}}$ such that
\[\mathrm{N}_{\mathrm{Sp}_{2n}}^+.v_{\bm{\mu}}=v_{\bm{\mu}},\qquad h.v_{\bm{\mu}}=h^{\bm{\mu}}v_{\bm{\mu}}\]
for all $h\in \mathrm{H}_{\mathrm{Sp}_{2n}}$.

For the orthogonal group $\mathrm{O}_n$, write
 \[\mathcal{P}(n):=\{\bm{\mu}=(\mu_1,\mu_2,\dots,\mu_n)\in \N^{n}\mid \mu_1\ge \mu_2\ge \cdots\ge \mu_n\}\]
 for the set of partitions with depth $\le n$.
 For every $\bm{\mu}\in \mathcal{P}(n)$, write
\[d(\bm\mu):=\textrm{max}\{k=1,2,\dots,n\mid\mu_k>0\}\] for the depth of $\bm{\mu}$,
 and write
 \[\bm{\mu}':=(\mu_1',\mu_2',\dots,\mu_n')\] for the transpose of $\bm{\mu}$, where
 $\mu_i^{\prime}=\textrm{Card}\{1\leq k\leq d(\bm\mu)\mid\mu_k\geq i\}$.

Set
\[\mathcal{R}(\mathrm{O}_n):=\{\bm{\mu}\in \mathcal{P}(n)\mid \mu_1'+\mu_2'\le n\}.\]
As usual, write $\lfloor\frac{n}{2}\rfloor$ for  the maximal  integer no larger than $\frac{n}{2}$.
Then we have
\[\mathcal{R}(\mathrm{O}_n)=\{\bm\mu, \tilde{\bm\mu}\mid \bm\mu\in \mathcal{P}(n)\ \text{with}\ d(\bm\mu)\le \lfloor\frac{n}{2}\rfloor\},\]
where  $\tilde{\bm\mu}$ is  the partition obtained from $\bm\mu$ by replacing the first column $\mu_1^{\prime}$ by $n-\mu_1^{\prime}$. Namely,
if $\bm\mu=(\mu_1,\mu_2,\dots,\mu_n)$, then
\[\tilde{\bm\mu}=\{\mu_1,\mu_2,\ldots,\mu_{d(\bm\mu)},1,\dots,1,0,\dots,0\}\]
 with $d(\tilde{\bm\mu})=n-\mu_1^{\prime}$.
 Note that $\bm\mu\ne \tilde{\bm\mu}$ if $d(\bm\mu)<\frac{n}{2}$, and $\bm\mu\ne \tilde{\bm\mu}$ if $d(\bm\mu)=\frac{n}{2}$ with $n$ even.

 We also set
 \[\mathcal{R}(\mathrm{SO}_n):=\{\bm\mu\in \mathcal{P}(n)\mid d(\bm\mu)<\frac{n}{2}\}
 \cup \{\bm{\mu},\overline{\bm{\mu}}\mid \bm\mu\in \mathcal{P}(n)\ \text{with $n$ even and}\ d(\bm\mu)=\frac{n}{2}\},
 \]
 where $\overline{\bm \mu}:=(\mu_1,\mu_2,\ldots,\mu_{\frac{n}{2}-1},-\mu_{\frac{n}{2}})$.
For every $\bm\mu\in \mathcal{R}(\mathrm{SO}_n)$,
denote by $L_{\mathrm{SO}_n}(\bm\mu)$ the irreducible regular $\mathrm{SO}_n$-module
 generated by a (nonzero) highest weight vector $v_{\bm{\mu}}$ such that
\[\mathrm{N}_{\mathrm{SO}_{n}}^+.v_{\bm{\mu}}=v_{\bm{\mu}},\qquad h.v_{\bm{\mu}}=h^{\bm{\mu}}v_{\bm{\mu}} \]
for all $h\in \mathrm{H}_{\mathrm{SO}_{n}}$.

If $n$ is odd, then $\mathrm{O}_n=\textrm{SO}_{n}\times \{\pm I_n\}$, where  $I_n$ denotes the identity matrix of rank $n$.
 Let $L$ be an irreducible regular $\mathrm{O}_n$-module. Then
 \begin{itemize}
 \item $-I_n$ acts as $\pm \mathrm{Id}_{L}$ on $L$; and
  \item there is a (unique) $\bm\mu\in \mathcal{P}(n)$ with $d(\bm\mu)< \frac{n}{2}$ such that $L|_{\mathrm{SO}_n}$ is isomorphic to $L_{\mathrm{SO}_n}(\bm\mu)$.
  \end{itemize}
 We denote $L$ by $L_{\mathrm{O}_n}(\bm\mu)$ if $-I_N$ acts as $\mathrm{Id}_{L}$ on $L$, and
  denote $L$ by $L_{\mathrm{O}_n}(\tilde{\bm\mu})$ if $-I_N$ acts as $-\textrm{Id}_{L}$.

If $n$ is even, then
 $\mathrm{O}_n=\textrm{SO}_{n}\rtimes \{\tau, I_n\}$, where
 $\tau$ denotes the $n\times n$-matrix which interchanges the $\frac{n}{2}$-th row and $(\frac{n}{2}+1)$-th row of identity matrix $I_n$.
  Let $L$ be an  irreducible regular $\textrm{O}_n$-module $L$. Then either
 \begin{itemize}
   \item there is a (unique) $\bm\mu\in \mathcal{P}(n)$ with $d(\bm\mu)=\frac{n}{2}$ such that $L|_{\mathrm{SO}_n}$ is isomorphic to a direct sum of $L_{\mathrm{SO}_n}(\bm \mu)$ and $L_{\mathrm{SO}_n}(\overline{\bm \mu})$; or
   \item there is a (unique) $\bm\mu\in \mathcal{P}(n)$ with $d(\bm\mu)<\frac{n}{2}$ such that $L|_{\mathrm{SO}_n}$  is isomorphic to $L_{\mathrm{SO}_n}(\bm \mu)$, and
    $\tau.v_{\bm\mu}=\pm v_{\bm\mu}$.
 \end{itemize}
In the first case, we denote $L$ by $L_{\mathrm{O}_n}(\bm \mu)$,
and in the second case,  we denote $L$ by $L_{\mathrm{O}_N}(\bm \mu)$ if $\tau.v_{\bm\mu}=v_{\bm\mu}$,
       and denote $L$ by $L_{\mathrm{O}_n}(\tilde{\bm \mu})$ if $\tau.v_{\bm\mu}=-v_{\bm\mu}$.

In summary, let $L$ be an irreducible regular $\mathrm{G}$-module with $\mathrm{G}=\mathrm{GL}_n$, $\mathrm{Sp}_{2n}$, $\mathrm{SO}_n$, or $\mathrm{O}_n$.
Then it is known that there is a (unique) $\bm\mu\in \mathcal{R}(\mathrm{G})$ such that
$L$ is isomorphic to $L_{\mathrm{G}}(\bm\mu)$ (see \cite{GW} for details).
\subsection{A grading on toroidal Lie algebras}
In this subsection, we introduce a canonical grading structure on the (central extensions of) toroidal Lie algebras.

Let $\mathfrak{g}$ be one of the toroidal Lie algebras $\widehat{\mathfrak{gl}}_m(\mathbb{C}_q)$,
$\widehat{\mathfrak{so}}_m(\mathbb{C}_q)$ or $\widehat{\mathfrak{sp}}_{2m}(\mathbb{C}_q)$.
We first endow $\mathfrak{g}$ a canonical $\Z$-grading structure $\mathfrak{g}=\oplus_{a\in \Z}\mathfrak{g}_{(a)}$ with respect to the degree of $t_0$.
Namely,
\begin{eqnarray*}
\widehat{\mathfrak{gl}}_m(\mathbb{C}_q)_{(a)}&=&\text{Span}_\C\{E_{i,j}t_0^at_1^b,\,\delta_{a,0}\bm{c}\mid 1\le i,j\le m, b\in \Z\},\\
\widehat{\mathfrak{so}}_m(\mathbb{C}_q)_{(a)}&=&\text{Span}_\C\{e_{i,j}(a,b),\,\delta_{a,0}\bm{c}\mid 1\le i,j\le m, b\in \Z\},\\
\widehat{\mathfrak{sp}}_{2m}(\mathbb{C}_q)_{(a)}&=&\text{Span}_\C\{f_{i,j}(a,b),\, g_{i,j}(a,b),\, h_{i,j}(a,b),\,\delta_{a,0}\bm{c}\mid 1\le i,j\le m, b\in \Z\}
\end{eqnarray*}
for $a\in \Z$.

Write
\begin{equation*}
\mathfrak{h}:=\begin{cases} \sum_{i=1}^{m}\mathbb{C}E_{i,i}\ &\text{if}\ \mathfrak{g}=\widehat{\mathfrak{gl}}_m(\mathbb{C}_q)\\
\sum_{i=1}^{m}\mathbb{C}e_{i,i}\ &\text{if}\ \mathfrak{g}=\widehat{\mathfrak{so}}_m(\mathbb{C}_q)\\
\sum_{i=1}^{m}\mathbb{C}f_{i,i}\ &\text{if}\ \mathfrak{g}=\widehat{\mathfrak{sp}}_{2m}(\mathbb{C}_q)
\end{cases},
\end{equation*}
which is a finite dimensional abelian subalgebra of $\mathfrak{g}$.
With respect to $\mathfrak{h}$, we have the following root space decomposition of $\mathfrak{g}$:
\[ \mathfrak{g}=\,\oplus_{\alpha\in \mathfrak{h}^*} \mathfrak{g}^\alpha\qquad
(\mathfrak{g}^\alpha:=\{x\in \mathfrak{g}\mid [h,x]=\alpha(h)x,\ \forall\ h\in \mathfrak{h}\}).
\]
Set
\[\Delta(\mathfrak{g}):=\{\alpha\in \mathfrak{h}^*\mid \mathfrak{g}^\alpha\ne 0\},\qquad
Q(\mathfrak{g}):=\Z\Delta(\mathfrak{g})\times \Z.\]
Since the spaces $\mathfrak{g}_{(a)}, a\in \Z$ are all $\mathfrak{h}$-invariant,
$\mathfrak{g}$ admits a  $Q(\mathfrak{g})$-grading as follows:
\begin{eqnarray}\label{eq:Q(g)grading}
\mathfrak{g}=\oplus_{(\alpha,a)\in Q(\mathfrak{g})}\,\mathfrak{g}_{(\alpha,a)}\qquad (\mathfrak{g}_{(\alpha,a)}:=\mathfrak{g}^{\alpha}\cap\mathfrak{g}_{(a)}).
\end{eqnarray}

Explicitly, when $\mathfrak{g}=\widehat{\mathfrak{gl}}_m(\mathbb{C}_q)$, the root system
of $\mathfrak{g}$ is the finite root system
\[\Delta(\mathfrak{g})=\{\epsilon_i-\epsilon_j\mid 1\leq i, j\leq m\}\] of type $A_{m-1}$, and
\begin{eqnarray*}
\widehat{\mathfrak{gl}}_m(\mathbb{C}_q)_{(0,a)}&=&\textrm{Span}_{\mathbb{C}}\{E_{i,i}t_0^at_1^b,~\delta_{a,0}\bm c\mid 1\leq i\leq m,~ b\in\mathbb{Z}\},\\
\widehat{\mathfrak{gl}}_m(\mathbb{C}_q)_{(\epsilon_i-\epsilon_j,a)}&=&\textrm{Span}_{\mathbb{C}}\{E_{i,j}t_0^at_1^b\mid b\in\mathbb{Z}\}
\end{eqnarray*}
for $a\in \Z$, $1\leq i\neq j\leq m$.
Here, $\epsilon_i\in \mathfrak{h}^*$ is defined by
$\epsilon_i(E_{j,j})=\delta_{i,j}$ for $1\le i,j\le m$.

When $\mathfrak{g}=\widehat{\mathfrak{so}}_m(\mathbb{C}_q)$, we define $\epsilon_i\in\mathfrak{h}^{\ast}$ by letting
 $\epsilon_i(e_{j,j})=\delta_{i,j}$ for $1\leq i,j\leq \lfloor\frac{m}{2}\rfloor$.
Then the root system
\begin{equation*}
\Delta(\mathfrak{g})=
  \begin{cases}
    \{\epsilon_i-\epsilon_j, \pm(\epsilon_i+\epsilon_j) \mid 1\leq i, j\leq \frac{m}{2} \} & \mbox{if } m\ \text{is even} \\
    \{\epsilon_i-\epsilon_j, \pm(\epsilon_i+\epsilon_j), \pm \epsilon_i \mid 1\leq i, j\leq \frac{m-1}{2}\} &\mbox{if } m\ \text{is odd}
  \end{cases}
\end{equation*}
of $\mathfrak{g}$ is the finite root system of type $C_{\frac{m}{2}}$ ($m$ even) or of type $BC_{\frac{m-1}{2}}$ ($m$ odd), and
\begin{eqnarray*}
\widehat{\mathfrak{so}}_m(\mathbb{C}_q)_{(0,a)}&=&\textrm{Span}_{\mathbb{C}}\{e_{i,i}(a,b),~\delta_{a,0}\bm c\mid 1\leq i\leq m,~ b\in\mathbb{Z}\},\\
\widehat{\mathfrak{so}}_m(\mathbb{C}_q)_{(\epsilon_i-\epsilon_j,a)}
&=&\textrm{Span}_{\mathbb{C}}\{e_{i,j}(a,b)\mid b\in\mathbb{Z}\},\\
\widehat{\mathfrak{so}}_m(\mathbb{C}_q)_{(\epsilon_k+\epsilon_l,a)}&=&
\textrm{Span}_{\mathbb{C}}\{e_{k,m+1-l}(a,b)\mid b\in\mathbb{Z}\},\\
\widehat{\mathfrak{so}}_m(\mathbb{C}_q)_{(-\epsilon_k-\epsilon_l,a)}&=&
\textrm{Span}_{\mathbb{C}}\{e_{m+1-k,l}(a,b)\mid b\in\mathbb{Z}\},\\
\widehat{\mathfrak{so}}_m(\mathbb{C}_q)_{(\epsilon_i,a)}&=&
    \textrm{Span}_{\mathbb{C}}\{e_{i,\frac{m+1}{2}}(a,b)\mid b\in\mathbb{Z}\},\\
    \widehat{\mathfrak{so}}_m(\mathbb{C}_q)_{(-\epsilon_i,a)}&=&
    \textrm{Span}_{\mathbb{C}}\{e_{m+1-i,\frac{m+1}{2}}(a,b)\mid b\in\mathbb{Z}\}
\end{eqnarray*}
 for $a\in \Z$, $1\leq i\neq j\leq \lfloor\frac{m}{2}\rfloor$ and $1\le k\le l\le \lfloor\frac{m}{2}\rfloor$, where the last two graded subspaces exist only when $m$ is odd.

Finally, when $\mathfrak{g}=\widehat{\mathfrak{sp}}_{2m}(\mathbb{C}_q)$,
define $\epsilon_i\in\mathfrak{h}^{\ast}$ by letting $\epsilon_i(f_{j,j})=\delta_{i,j}$ for $1\leq i,j\leq m$.
Then the root system
\begin{equation*}
\Delta(\mathfrak{g})=    \{\epsilon_i-\epsilon_j, \pm(\epsilon_i+\epsilon_j) \mid 1\leq i, j\leq m \}
\end{equation*}
 of $\mathfrak{g}$ is the finite root system of type $C_{m}$, and
\begin{eqnarray*}
\widehat{\mathfrak{sp}}_{2m}(\mathbb{C}_q)_{(0,a)}&=&\textrm{Span}_{\mathbb{C}}\{f_{i,i}(a,b),~\delta_{a,0}\bm c\mid 1\leq i\leq m,~ b\in\mathbb{Z}\},\\
\widehat{\mathfrak{sp}}_{2m}(\mathbb{C}_q)_{(\epsilon_i-\epsilon_j,a)}&=&\textrm{Span}_{\mathbb{C}}\{f_{i,j}(a,b)\mid b\in\mathbb{Z}\},\\
\widehat{\mathfrak{sp}}_{2m}(\mathbb{C}_q)_{(\epsilon_k+\epsilon_l,a)}&=&\textrm{Span}_{\mathbb{C}}\{g_{k,l}(a,b)\mid b\in\mathbb{Z}\},\\
\widehat{\mathfrak{sp}}_{2m}(\mathbb{C}_q)_{(-\epsilon_k-\epsilon_l,a)}&=&\textrm{Span}_{\mathbb{C}}\{h_{k,l}(a,b)\mid b\in\mathbb{Z}\}
\end{eqnarray*}
 for $a\in \Z$, $1\leq i\neq j\leq m$ and $1\le k\le l\le m$.

\subsection{Highest weight modules for classical toroidal Lie algebras}
Here we introduce a notion of highest weight modules for the classical torodial Lie algebras.

We start with a general definition on highest weight modules.
Let $\Gamma$ be an abelian group equipped with a partial order ``$\succeq$'',
which is additive in the sense that  $\al_1\succeq \beta_1$ and $\al_2\succeq \beta_2$ imply
$\al_1+\al_2\succeq \beta_1+\beta_2$ for $\al_i,\beta_i\in \Gamma$.
By a $(\Gamma,\succeq )$-triangulated Lie algebra, we mean a
$\Gamma$-graded Lie algebra $\mathfrak{g}=\oplus_{\gamma\in\Gamma}\mathfrak{g}_{\gamma}$ such that $\mathfrak{g}_0$ is abelian and
\begin{align}\label{cgtri}
\mathfrak{g}=\mathfrak{g}_{+}\oplus \mathfrak{g}_{0}\oplus \mathfrak{g}_{-}\quad (\mathfrak g_{\pm}:=\oplus_{\pm \gamma\succ 0}\mathfrak g_{\gamma}).
\end{align}

\begin{dfnt}
Let $\mathfrak{g}$ be a  $(\Gamma,\succeq )$-triangulated Lie algebra. A $\mathfrak{g}$-module
$W$ is called a highest weight module with highest weight $\lambda\in \mathfrak{g}_{ 0}^*$ if there exists a nonzero vector $v_{\lambda}\in W$ such that
\begin{equation*}
W=\mathcal{U}(\mathfrak{g})v_{\lambda},\qquad \mathfrak{g}_{+}.v_{\lambda}=0,\qquad h.v_{\lambda}=\lambda(h)v_{\lambda}
\end{equation*}
for all $h\in \mathfrak{g}_{0}$.
\end{dfnt}

Given a $\lambda\in \mathfrak{g}_{0}^*$. Let $\C v_\lambda$ be a one-dimensional
$(\mathfrak{g}_{+}\oplus \mathfrak{g}_{0})$-module on which $\mathfrak{g}_{+}$ acts trivially and $h.v_{\lambda}=\lambda(h)v_{\lambda}$ for all
$h\in \mathfrak{g}_{0}$.
Form an induced $\mathfrak{g}$-module
\begin{align}\label{defvgu}
 V_{\mathfrak{g},\Gamma,\succeq}(\lambda):=\U(\mathfrak{g})\ot_{\U(\mathfrak{g}_{+}\oplus\mathfrak{g}_{0})} \C v_\lambda.
 \end{align}
Note that there is a natural $\Gamma$-grading structure on  $V_{\mathfrak{g},\Gamma,\succeq}(\lambda)$ defined  by letting $\deg v_{\lambda}=0$.
Let $J_{\mathfrak{g},\Gamma,\succeq}(\lambda)$ be the (unique) maximal proper $\Gamma$-graded submodule of $V_{\mathfrak{g},\Gamma,\succeq}(\lambda)$.
Set
\begin{align}
L_{\mathfrak{g},\Gamma,\succeq}(\lambda):=V_{\mathfrak{g},\Gamma,\succeq}(\lambda)/J_{\mathfrak{g},\Gamma,\succeq}(\lambda),
\end{align}
which  is a graded irreducible  $\mathfrak{g}$-module.
The following result shows that  $L_{\mathfrak{g},\Gamma,\succeq}(\lambda)$ is in fact an irreducible $\mathfrak{g}$-module.

\begin{lemt}\label{Lirr}
For any $\lambda\in\mathfrak{g}_0^{\ast}$, the $\mathfrak{g}$-module $L_{\mathfrak{g},\Gamma,\succeq}(\lambda)$ is irreducible.
\end{lemt}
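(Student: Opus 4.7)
My plan is to reduce the ungraded irreducibility of $L:=L_{\mathfrak{g},\Gamma,\succeq}(\lambda)$ to its graded irreducibility, and then remove the ``lower order'' part of a well-chosen element by an induction on support size. First I would fix a nonzero $\mathfrak{g}$-submodule $W\subseteq L$ and consider its \emph{graded closure} $W^{\mathrm{gr}}:=\sum_{\gamma\in\Gamma}\pi_\gamma(W)$, where $\pi_\gamma:L\to L_\gamma$ is the projection onto the $\gamma$-th graded piece. Using the grading-equivariance of the action, namely $\pi_{\gamma+\delta}(x\cdot w)=x\cdot\pi_\gamma(w)$ for homogeneous $x\in\mathfrak{g}_\delta$, I would verify that $W^{\mathrm{gr}}$ is a graded $\mathfrak{g}$-submodule. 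Since $L$ is graded-irreducible (an immediate consequence of $J_{\mathfrak{g},\Gamma,\succeq}(\lambda)$ being the maximal proper graded submodule of $V_{\mathfrak{g},\Gamma,\succeq}(\lambda)$), the nonzero graded closure $W^{\mathrm{gr}}$ must equal $L$. In particular $\pi_0(W)=L_0=\C\bar v_\lambda$, which produces an element $w=\bar v_\lambda+w'\in W$ with $w'\in\bigoplus_{\gamma\prec 0}L_\gamma$.

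The goal is then to conclude that $\bar v_\lambda\in W$, which forces $W=L$ since $L=\U(\mathfrak{g})\bar v_\lambda$. I plan to do this by induction on the finite cardinality of $\mathrm{supp}(w'):=\{\gamma\prec 0:w'_\gamma\ne 0\}$. Pick a $\preceq$-maximal $\gamma^*\in\mathrm{supp}(w')$. The crux is to produce a homogeneous element $c\in\U(\mathfrak{g}_+)$ of degree $-\gamma^*$ with $c\cdot w'_{\gamma^*}=\bar v_\lambda$. Granting this, using $c\cdot\bar v_\lambda=0$ gives $c\cdot w=\bar v_\lambda+w''\in W$, where $w'':=\sum_{\gamma\ne\gamma^*}c\cdot w'_\gamma$ has each summand $c\cdot w'_\gamma$ lying in $L_{\gamma-\gamma^*}$. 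The summands for which $\gamma-\gamma^*\not\preceq 0$ vanish automatically because $L$ lives in degrees $\preceq 0$, while the surviving ones sit in pairwise distinct graded pieces indexed by a subset of $\mathrm{supp}(w')\setminus\{\gamma^*\}$; hence $|\mathrm{supp}(w'')|\le|\mathrm{supp}(w')|-1$, and the induction hypothesis applied to $c\cdot w\in W$ delivers $\bar v_\lambda\in W$.

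The main obstacle is the existence of the lifting element $c$, and here I would invoke graded irreducibility once more: the submodule $\U(\mathfrak{g})\cdot w'_{\gamma^*}$ is a nonzero graded submodule (since $w'_{\gamma^*}$ is homogeneous), hence equals $L$, so $\bar v_\lambda=y\cdot w'_{\gamma^*}$ for some $y\in\U(\mathfrak{g})$. Expanding $y$ through the PBW decomposition $\U(\mathfrak{g})=\U(\mathfrak{g}_-)\U(\mathfrak{g}_0)\U(\mathfrak{g}_+)$ and matching degrees carefully (once again exploiting $L_\delta=0$ unless $\delta\preceq 0$) would force every surviving PBW summand $a_ib_ic_i$ to satisfy $\deg a_i=0$ and $\deg c_i=-\gamma^*$, so that each $c_i\cdot w'_{\gamma^*}\in L_0=\C\bar v_\lambda$. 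Since $\U(\mathfrak{g}_0)$ acts on $\bar v_\lambda$ by the character $\lambda$, the identity $y\cdot w'_{\gamma^*}=\bar v_\lambda$ would then force at least one $c_i$ to satisfy $c_i\cdot w'_{\gamma^*}\in\C^{\times}\bar v_\lambda$, and rescaling yields the desired $c$. This PBW step, where the one-dimensionality of $L_0$ and the triangular decomposition of $\mathfrak{g}$ combine to rule out contributions from $\U(\mathfrak{g}_-)$, is the delicate heart of the argument.
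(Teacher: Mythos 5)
Your argument is correct, and it runs on the same two engines as the paper's proof: the graded irreducibility of $L_{\mathfrak{g},\Gamma,\succeq}(\lambda)$ (coming from the maximality of $J_{\mathfrak{g},\Gamma,\succeq}(\lambda)$ among graded submodules) and the fact that the module is concentrated in degrees $\preceq 0$, which controls all cross terms. The difference is organizational. The paper takes an arbitrary nonzero $v=v_1+\cdots+v_k$ with distinct homogeneous degrees $\gamma_i\preceq 0$, uses graded irreducibility to pick a homogeneous $x_i$ of degree $-\gamma_i$ with $x_i.v_i=v_\lambda$ for each $i$, and then chooses $\gamma_j$ \emph{minimal}: since $x_j.v_i$ has degree $\gamma_i-\gamma_j$, which can only support a nonzero vector when $\gamma_i\preceq\gamma_j$, minimality kills every cross term at once and $x_j.v=v_\lambda$ in a single step, with no induction, no graded closure, and no need for $L_0$ to be one-dimensional. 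Your route instead passes through the graded closure to normalize the vector to the form $\bar v_\lambda+w'$ and then peels off one component of $w'$ per inductive step; this works, but note two simplifications: (i) the maximality of $\gamma^*$ is never actually used (any element of $\mathrm{supp}(w')$ shrinks the support), whereas the paper's minimality choice is exactly what collapses the whole induction; and (ii) your PBW analysis forcing $c\in\mathcal{U}(\mathfrak{g}_+)$ is superfluous -- any homogeneous $c\in\mathcal{U}(\mathfrak{g})$ of degree $-\gamma^*\succ 0$ automatically satisfies $c.\bar v_\lambda\in L_{-\gamma^*}=0$ because $L$ lives in degrees $\preceq 0$, so graded irreducibility alone already hands you the required $c$. (Your appeal to $L_0=\C\bar v_\lambda$ does need the small observation that a nonempty sum of elements $\prec 0$ of $\Gamma$ is again $\prec 0$, which follows from additivity of the order; this is fine, but the paper's argument avoids relying on it.) In short: correct, same underlying mechanism, but the paper's choice of the minimal-degree component makes the proof a one-move argument where yours is an induction with extra machinery.
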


\begin{proof}
Let $v=v_1+\cdots+v_k$ be a nonzero vector in $L_{\mathfrak{g},\Gamma,\succeq}(\lambda)$ with $\deg v_{i}=\gamma_i\preceq 0$ for some distinct $\gamma_i\in\Gamma$.
For each $1\le i\le k$, $\U(\mathfrak{g})v_i$ is a nonzero graded $\mathfrak{g}$-submodule of
$L_{\mathfrak{g},\Gamma,\succeq}(\lambda)$ and hence $\U(\mathfrak{g})v_i=L_{\mathfrak{g},\Gamma,\succeq}(\lambda)$. Consequently,
there exists
$x_i\in\mathfrak{g}_{-\gamma_i}$ such that $ x_i. v_i=v_{\lambda}$.
Let $\gamma_j$ be a minimal element in the set $\{\gamma_1,\dots,\gamma_k\}$.
As $\deg x_j.v_i=\gamma_i-\gamma_j$, we see that $x_j. v_i\ne 0$ if and only if $\gamma_i=\gamma_j$.
Thus we have $x_j. v=x_j. v_j=v_{\lambda}$.
This gives that $\U(\mathfrak{g})v=\U(\mathfrak{g})v_{\lambda}=L_{\mathfrak{g},\Gamma,\succeq}(\lambda)$, as desired.
\end{proof}

Lemma $\ref{Lirr}$ implies that any irreducible highest weight $\mathfrak{g}$-module with highest weight $\lambda\in\mathfrak{g}_{0}^{\ast}$
must be isomorphic to $L_{\mathfrak{g},\Gamma,\succeq}(\lambda)$.
We also notice that, for $\lambda,\mu\in\mathfrak{g}_0^{\ast}$,   $L_{\mathfrak{g},\Gamma,\succeq}(\lambda)\cong L_{\mathfrak{g},\Gamma,\succeq}(\mu)$
 if and only if $\lambda=\mu$.

Now let $\mathfrak{g}$ be one of the toroidal Lie algebras $\widehat{\mathfrak{gl}}_m(\mathbb{C}_q)$,
$\widehat{\mathfrak{so}}_m(\mathbb{C}_q)$ or $\widehat{\mathfrak{sp}}_{2m}(\mathbb{C}_q)$.
Recall from \eqref{eq:Q(g)grading} that  $\mathfrak{g}$ is a $Q(\mathfrak{g})$-graded Lie algebra.
Choose a simple root base $\Pi(\mathfrak{g})$ of the finite irreducible root system $\Delta(\mathfrak{g})$.
Then there is a canonical  partial order $\succeq_{\Pi(\mathfrak{g})}$ on $\Z\Delta(\mathfrak{g})$ such that
\[\alpha \succeq_{\Pi(\mathfrak{g})} \beta \iff \alpha-\beta\in \sum_{\gamma\in \Pi(\mathfrak{g})} \N \gamma
\]
for $\alpha,\beta\in \Z\Delta(\mathfrak{g})$.
Extend $\succeq_{\Pi(\mathfrak{g})}$ to a partial order on $Q(\mathfrak{g})$ such that
$$(\alpha,a)\succeq_{\Pi(\mathfrak{g})} (\beta,b)\iff\text{either}\ a>b, \textrm{ or }a=b \textrm{ and }\alpha\succeq_{\Pi(\mathfrak{g})}  \beta$$
 for $\alpha,\beta\in \Z\Delta(\mathfrak{g}), a,b\in \Z$.
 Then $\mathfrak{g}$ becomes a $(Q(\mathfrak{g}),\succeq_{\Pi(\mathfrak{g})})$-triangulated Lie algebra.
 In particular, for every $\lambda\in \mathfrak{g}_0^\ast$, we have an irreducible
 highest weight $\mathfrak{g}$-module
 \begin{eqnarray}
 \label{eq:LgQg}
 L_{\mathfrak{g},Q(\mathfrak{g}),\succeq_{\Pi(\mathfrak{g})}}(\lambda).
 \end{eqnarray}

\section{Howe dual pairs on toroidal oscillator modules}
In this section, we state the main result of this paper.

 \subsection{The oscillator $\widehat{\mathfrak{sp}}_{2N}(\C_q)$-module $\mathcal{F}_N(\bf{Z})$}
 Let $(\mathfrak{a},\<\cdot,\cdot\>)$ be a symplectic complex vector space of dimension $2N$,
and let $\mathcal{W}_N$ be the Weyl algebra associated to $(\mathfrak{a},\<\cdot,\cdot\>)$. Recall that $\mathcal{W}_N$ is the unital associative algebra
generated by the space $\mathfrak{a}$ and satisfies  the relations
\[ uv-vu=\<u,v\>\,1\]
for $u,v\in \mathfrak{a}$.
Fix a
polarization
\[\mathfrak{a}=\mathfrak{a}_1\oplus\mathfrak{a}_2\quad (\mathfrak{a}_1\cong \C^N\cong \mathfrak{a}_2)\] of $\mathfrak{a}$  into maximal isotropic subspaces.
Let $\mathcal{F}_N$ be the simple $\mathcal{W}_N$-module generated by a vacuum vector $|0\>$ such that $\mathfrak{a}_2.|0\>=0$.
It is known that the quadratic elements $u v$ $(u,v\in \mathfrak{a})$ in $\mathrm{End}(\mathcal{F}_N)$ span the symplectic Lie algebra
$\mathfrak{sp}_{2N}$, and so $\mathcal{F}_N$ becomes an $\mathfrak{sp}_{2N}$-module.

 In what follows we generalize the  oscillator $\mathfrak{sp}_{2N}$-module $\mathcal{F}_N$ to the  symplectic  toroidal  Lie algebra
$\widehat{\mathfrak{sp}}_{2N}(\C_q)$ (cf.\cite{CG}).

As in \cite{FF}, we first introduce an enlarge Weyl algebra $W_N(\bm{Z})$, where $\bm{Z}=\Z$ or $\Z+\frac{1}{2}$.
By definition, it is generated by the elements  $u(k)$ for $u\in \mathfrak{a}$, $k\in \bm{Z}$, and
subject to the relations
\[u(k)v(r)-v(r)u(k)=\<u,v\>\,\delta_{k+r,0}\,1\]
for  $u,v\in \mathfrak{a},\ k,r\in \bm{Z}$.
Define a  normal ordering on $\mathcal{W}_N(\bm{Z})$ as follows:
\begin{equation}\label{no}
:u(k)v(r):=\begin{cases}
u(k)v(r)\quad  &\textrm{if}\ r>0\\
\frac{1}{2}(u(k)v(r)+v(r)u(k))\quad &\text{if}\ r=0\\
v(r)u(k)\quad  &\textrm{if}\ r<0
\end{cases},
\end{equation}
where $u,v\in \mathfrak{a}$ and $k,r\in \bm{Z}$.

Let $\mathcal{F}_N(\bm{Z})$ be the simple $\mathcal{W}_N(\bm{Z})$-module generated by a (nonzero) vacuum vector $|0\>$ such that
\begin{itemize}
\item $u(k).|0\>=0$   for all $u\in \mathfrak{a}$
and $k\in \bm{Z}$ with $k>0$;  and
\item  $v(0).|0\>=0$ for all $v\in \mathfrak{a}_2$ if $\bm{Z}=\Z$.
\end{itemize}
Note that $\mathcal{F}_N(\bm{Z})$ is a free $\mathcal{W}_N(\bm{Z})^-$-module of rank $1$, where
$\mathcal{W}_N(\bm{Z})^-$ denotes the subalgebra of $\mathcal{W}_N(\bm{Z})$ generated by the elements $u(k)$ for  $u\in \mathfrak{a}$
and $k\in \bm{Z}$ with $k<0$, and the elements $v(0)$ for $v\in \mathfrak{a}_1$ if $\bm{Z}=\Z$.

Fix a basis $\psi_1,\psi_2,\dots,\psi_N$ of $\mathfrak{a}_1$, and let $\bar\psi_1,\bar\psi_2,\dots,\bar\psi_N$ be the dual basis in $\mathfrak{a}_2$
such that $\<\bar\psi_i,\psi_j\>=\delta_{i,j}$ for $1\le i,j\le N$.
For $1\leq r\leq m$ and $1\leq k\leq n$, we will also write (see \eqref{eq:pi})
\[\psi_r^k:=\psi_{\pi(r,k)},\qquad \bar{\psi}_r^k:=\bar{\psi}_{\pi(r,k)}.\]

For every $b\in\mathbb{Z}$, set
\begin{equation*}\label{}
\omega_{\bm{Z}}(b)=
  \begin{cases}
    0, & \mbox{if } b=0 \\
    \frac{1}{2}\frac{q^b+1}{q^b-1}, & \mbox{if }b\neq 0\ \text{and}\ \bm{Z}=\Z\\
    \frac{q^{\frac{1}{2}b}}{q^b-1}, & \mbox{if }b\neq 0\ \text{and}\ \bm{Z}=\Z+\frac{1}{2}
  \end{cases},
\end{equation*}
where $q^{\frac{1}{2}b}=\left(q^{\frac{1}{2}}\right)^b$ and $q^{\frac{1}{2}}$ is a
 square root of $q$ (fixed throughout this paper).

Define
\[\rho_{\bm{Z}}:\ \widehat{\mathfrak{sp}}_{2N}(\mathbb{C}_q)\longrightarrow  \textrm{End}(\mathcal{F}_N(\bm{Z}))\]
 to be a linear map such that $\rho(\bm c)=-1$ and
\begin{equation}\label{ho}
\begin{split}
 \rho_{\bm Z}(f_{ij}(a,b))&=\sum_{r\in\bm{Z}}q^{-br}:\psi_{i}(a-r)\bar{\psi}_{j}(r):+\delta_{i,j}\delta_{a,0}\omega_{\bm{Z}}(b)\bm{c},\\
   \rho_{\bm Z}(g_{ij}(a,b))&=\sum_{r\in\bm{Z}}q^{-br}:\psi_{i}(a-r)\psi_{j}(r):,\\
   \rho_{\bm Z}(h_{ij}(a,b))&=\sum_{r\in\bm{Z}}q^{-br}:\bar\psi_{i}(a-r)\bar\psi_{j}(r):,\\
\end{split}
\end{equation}
where $1\leq i,j\leq N$ and $a,b\in\mathbb{Z}$.

Then we have the following result.

\begin{prpt}\label{le4}
  The linear map $\rho_{\bm{Z}}$  gives a $\widehat{\mathfrak{sp}}_{2N}(\mathbb{C}_q)$-module structure on $\mathcal{F}_N(\bm{Z})$.
\end{prpt}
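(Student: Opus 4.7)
My plan is to verify directly that $\rho_{\bm{Z}}$ preserves the defining bracket relations of $\widehat{\mathfrak{sp}}_{2N}(\mathbb{C}_q)$ on the generators $f_{ij}(a,b)$, $g_{ij}(a,b)$, $h_{ij}(a,b)$ and $\bm c$. Before computing commutators I would first justify that each $\rho_{\bm Z}(x)$ really is a well-defined linear endomorphism of $\mathcal{F}_N(\bm Z)$: for every vector $v\in\mathcal{F}_N(\bm Z)$ there exists an integer $K$ (depending on $v$) such that $u(k).v=0$ for all $u\in\mathfrak{a}$ and all $k>K$ (together with $u(0).v=0$ for $u\in\mathfrak{a}_2$ when $\bm Z=\Z$). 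After applying the normal-ordering convention \eqref{no} to push positive-mode factors to the right, only finitely many summands of $\sum_{r\in\bm Z}q^{-br}:\psi_i(a-r)\bar\psi_j(r):$ act nontrivially on $v$, and similarly for the $g$- and $h$-series.

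The core step is the commutator calculation. Applying the Weyl relation $[u(k),v(r)]=\<u,v\>\delta_{k+r,0}$ twice gives a Wick-type identity for the bracket of two quadratic normal-ordered monomials: a sum of four single-contraction terms (each again a normal-ordered quadratic expression) plus a scalar double-contraction anomaly. I would substitute this identity into $[\rho_{\bm Z}(f_{ij}(a,b)),\rho_{\bm Z}(f_{kl}(c,d))]$ and, after reindexing the double sum over $r,s\in\bm Z$ via the Kronecker constraints, recognize the non-anomalous part as exactly $\rho_{\bm Z}$ applied to the Lie bracket of $f_{ij}(a,b)$ and $f_{kl}(c,d)$ computed from the formula of the excerpt (viewing $\mathfrak{sp}_{2N}(\mathbb{C}_q)\subset \mathfrak{gl}_{2N}(\mathbb{C}_q)$). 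The same mechanism handles $[f,g]$, $[f,h]$, $[g,g]$, $[g,h]$, $[h,h]$ in parallel, and the bracket with $\bm c$ is trivial because $\rho_{\bm Z}(\bm c)=-1$ is a scalar.

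The main obstacle will be matching the scalar double-contraction term against the central cocycle of $\widehat{\mathfrak{gl}}_{2N}(\mathbb{C}_q)$. This anomaly is nonzero only on the ``diagonal'' $j=k$, $i=l$, $a+c=0$, $b+d=0$ (and analogous conditions in the $g$-$h$ case), and it reduces to a formal evaluation of the series $\sum_{r\in\bm Z,\,r>0}q^{r(d-b)}-\sum_{r\in\bm Z,\,r<0}q^{r(d-b)}$, with an additional half-weighted $r=0$ contribution when $\bm Z=\Z$ coming from the symmetrized normal ordering at that point. Because $q$ is not a root of unity, the geometric sums evaluate to explicit rational functions of $q^{b}$, and a direct comparison shows these are precisely the $\omega_{\bm Z}(b)$ appearing in \eqref{ho}. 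The added correction $\delta_{i,j}\delta_{a,0}\omega_{\bm Z}(b)\bm c$ built into $\rho_{\bm Z}(f_{ij}(a,b))$ is calibrated so that it cancels the off-diagonal ($a\ne -c$ or $b\ne -d$) contributions arising from the two $\omega_{\bm Z}$ corrections themselves, while on the diagonal the surviving piece equals $\tfrac12\delta_{jk}\delta_{il}\delta_{a+c,0}\delta_{b+d,0}q^{bc}a\bm c$, matching the central term of the excerpt's bracket formula. Once this calibration is confirmed for the $f$-$f$ bracket, the remaining cases follow by the same bookkeeping, completing the proof.
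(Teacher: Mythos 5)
The paper does not carry out this verification at all: its proof is a one-line citation of \cite[Theorem 2.1]{CG} for $\bm Z=\Z$ together with the remark that $\bm Z=\Z+\frac{1}{2}$ is analogous. So your plan — well-definedness via local finiteness of the normal-ordered sums, then a Wick-type check of the brackets with the anomaly absorbed by the central term and the $\omega_{\bm Z}$ shifts — is the right (and essentially the only) direct route, and its first two steps are sound.

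However, the description of the decisive anomaly-matching step is incorrect as written. The double-contraction scalar in $[\rho_{\bm Z}(f_{ij}(a,b)),\rho_{\bm Z}(f_{kl}(c,d))]$ is supported on $j=k$, $i=l$, $a+c=0$ \emph{without} any condition on $b+d$: collapsing the double sum with the two contraction deltas and re-normal-ordering, the divergent tails of the two single-contraction families cancel pairwise, and what survives is the finite scalar $\delta_{il}\delta_{jk}\delta_{a+c,0}\,q^{ad}\sum_{r\in\bm Z}q^{-(b+d)r}\bigl(\theta(r)-\theta(r-a)\bigr)$, where $\theta$ is the step function implicit in the normal ordering \eqref{no}; this is a geometric sum over roughly $|a|$ consecutive values of $r$ (with half-weighted endpoints when $\bm Z=\Z$), and its exponent involves $b+d$, not $d-b$. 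Your version — a difference of two \emph{infinite} series $\sum_{r>0}q^{r(d-b)}-\sum_{r<0}q^{r(d-b)}$ to be ``formally evaluated'' because $q$ is not a root of unity — is not a legitimate step: non-root-of-unity gives no convergence, and if the cancellation is tracked no divergent series ever appears. Relatedly, the role of the $\omega_{\bm Z}$ corrections is misstated: being scalars, they drop out of the left-hand commutator entirely; they enter only through $\rho_{\bm Z}$ applied to the right-hand side, because for $a+c=0$ the bracket $[f_{ij}(a,b),f_{kl}(c,d)]$ contains $f_{il}(0,b+d)$ and $f_{kj}(0,b+d)$, whose images under \eqref{ho} carry $\omega_{\bm Z}(b+d)\rho_{\bm Z}(\bm c)$. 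For $b+d\neq0$ these contributions do \emph{not} cancel among themselves; they are exactly what the finite geometric sum above must equal (an elementary closed-form identity using $\omega_{\bm Z}(b+d)=\tfrac12\tfrac{q^{b+d}+1}{q^{b+d}-1}$, resp.\ $\tfrac{q^{(b+d)/2}}{q^{b+d}-1}$), while for $b+d=0$ the sum reduces to $\mp a$ times a power of $q$ and reproduces the cocycle term $\delta_{jk}\delta_{il}\delta_{a+c,0}\delta_{b+d,0}q^{bc}a\,\bm c$ with $\rho_{\bm Z}(\bm c)=-1$; the $[g,h]$ bracket requires the same treatment. Until the support and the finite-sum form of the anomaly, and the correct bookkeeping of where the $\omega_{\bm Z}$ terms act, are set right, the calibration claim at the heart of your argument is not established.
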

\begin{proof} When $\bm{Z}=\Z$, the proposition was proved in \cite[Theorem 2.1]{CG}, and the case that $\bm{Z}=\Z+\frac{1}{2}$ can be
proved in a similar way.
\end{proof}
\subsection{Some highest weight modules}
Let $\mathfrak{g}$ be one of the toroidal Lie algebras $\widehat{\mathfrak{gl}}_m(\mathbb{C}_q)$, $\widehat{\mathfrak{sp}}_{2m}(\mathbb{C}_q)$ or
$\widehat{\mathfrak{so}}_m(\mathbb{C}_q)$.
In this subsection, we introduce some special highest weight $\mathfrak{g}$-modules, which are concerned about in this paper.

In what follows we define a simple  root base $\Pi(\mathfrak{g},\bm{Z})$ of $\Delta(\mathfrak{g})$ ($\bm{Z}=\Z+\frac{1}{2}$ if
$\mathfrak{g}=\widehat{\mathfrak{so}}_m(\mathbb{C}_q)$), and give the associated triangular decomposition
$\mathfrak{g}=\mathfrak{g}_{+,\bm{Z}}\oplus \mathfrak{g}_0\oplus \mathfrak{g}_{-,\bm{Z}}$ of $\mathfrak{g}$, where
  $\mathfrak{g}_{\pm,\bm{Z}}=\oplus_{\pm (\alpha,a)\succ_{\Pi(\mathfrak{g},\bm{Z})} 0}\mathfrak{g}_{(\alpha,a)}$.
 When $\mathfrak{g}=\widehat{\mathfrak{gl}}_m(\mathbb{C}_q)$, the simple root base
 \begin{equation}\label{eq:Pi1}
 \Pi(\mathfrak{g},\bm{Z})=\{\epsilon_1-\epsilon_2,\epsilon_2-\epsilon_3,\dots,\epsilon_{m-1}-\epsilon_m\}
 \end{equation}
 for both $\bm{Z}=\Z+\frac{1}{2}$ and $\Z$,
and we have
 \begin{eqnarray*}
\widehat{\mathfrak{gl}}_m(\mathbb{C}_q)_{+,\bm{Z}}&=&\sum_{1\le i,j\le m; a,b\in \Z, a>0} \mathbb{C}E_{i,j}t_0^at_1^b+\sum_{
1\le i<j\le m; b\in \Z}\mathbb{C}E_{i,j}t_1^b,\\
\widehat{\mathfrak{gl}}_m(\mathbb{C}_q)_{-,\bm{Z}}&=&\sum_{1\le i,j\le m; a,b\in \Z, a<0} \mathbb{C}E_{i,j}t_0^at_1^b+\sum_{
1\le i>j\le m; b\in \Z}\mathbb{C}E_{i,j}t_1^b,\\
\widehat{\mathfrak{gl}}_m(\mathbb{C}_q)_{0}&=&\sum_{1\leq i\leq m; b\in\mathbb{Z}}\mathbb{C}(E_{i,i}t_1^b)+\mathbb{C}\bm{c},
\end{eqnarray*}

When $\mathfrak{g}=\widehat{\mathfrak{sp}}_{2m}(\mathbb{C}_q)$, the simple root bases
\begin{eqnarray}
\label{eq:Pi3} \Pi(\mathfrak{g},\Z+\frac{1}{2})&=&\{\epsilon_i-\epsilon_{i+1},~2\epsilon_{m}\mid 1\leq i\leq m-1\},\\
\label{eq:Pi4} \Pi(\mathfrak{g},\Z)&=&\{-2\epsilon_{1},~\epsilon_i-\epsilon_{i+1}\mid 1\leq i\leq m-1\}.
\end{eqnarray}
And, we have that
\begin{equation*}
  \begin{split}
&  \widehat{\mathfrak{sp}}_{2m}(\mathbb{C}_q)_{\pm,\Z+\frac{1}{2}}=\widehat{\mathfrak{gl}}_{2m}(\mathbb{C}_q)_{\pm}\cap
\widehat{\mathfrak{sp}}_{2m}(\mathbb{C}_q),\qquad \widehat{\mathfrak{sp}}_{2m}(\mathbb{C}_q)_{0}=\widehat{\mathfrak{gl}}_{2m}(\mathbb{C}_q)_{0}\cap
\widehat{\mathfrak{sp}}_{2m}(\mathbb{C}_q),\\
  &  \widehat{\mathfrak{sp}}_{2m}(\mathbb{C}_q)_{+,\Z}=\sum_{1\le i,j\le m; a,b\in \Z, a>0}\mathbb{C} f_{i,j}(a,b)+\sum_{1\le i<j\le m;b\in\Z} \mathbb{C}f_{i,j}(0,b) \\
   &+\sum_{1\le i,j\le m; a,b\in \Z, a>0}\mathbb{C}g_{i,j}(a,b)+\sum_{1\le i,j\le m; a,b\in \Z, a>0}\mathbb{C}h_{i,j}(a,b)+\sum_{1\le i, j\le m; b\in \Z}\mathbb{C}h_{i,j}(0,b),\\
    &\widehat{\mathfrak{sp}}_{2m}(\mathbb{C}_q)_{-,\Z}=\sum_{1\le i,j\le m; a,b\in \Z, a<0}\mathbb{C} f_{i,j}(a,b)+\sum_{1\le i>j\le m;b\in\Z} \mathbb{C}f_{i,j}(0,b) \\
   &+\sum_{1\le i,j\le m; a,b\in \Z, a<0}\mathbb{C}g_{i,j}(a,b)+\sum_{1\le i,j\le m; a,b\in \Z, a<0}\mathbb{C}h_{i,j}(a,b)+\sum_{1\le i, j\le m; b\in \Z}\mathbb{C}g_{i,j}(0,b).\\
  \end{split}
\end{equation*}

When $\mathfrak{g}=\widehat{\mathfrak{so}}_m(\mathbb{C}_q)$ and $\bm{Z}=\Z+\frac{1}{2}$, the simple base
 \begin{equation}\label{eq:Pi2}
 \Pi(\mathfrak{g},\bm{Z})=
   \begin{cases}
     \{\epsilon_i-\epsilon_{i+1},~2\epsilon_{\frac{m}{2}},\mid 1\leq i\leq \frac{m}{2}-1\} & \mbox{if } m\ \text{is even} \\
     \{\epsilon_i-\epsilon_{i+1},~\epsilon_{\frac{m-1}{2}},\mid 1\leq i\leq \frac{m-1}{2}-1\} & \mbox{if } m\ \text{is odd}
   \end{cases}.
 \end{equation}
 In this case, we have
 \[\widehat{\mathfrak{so}}_m(\mathbb{C}_q)_{\pm,\bm{Z}}=
 \widehat{\mathfrak{gl}}_m(\mathbb{C}_q)_{\pm,\bm{Z}} \cap \widehat{\mathfrak{so}}_m(\mathbb{C}_q),\quad
 \widehat{\mathfrak{so}}_m(\mathbb{C}_q)_{0}=
 \widehat{\mathfrak{gl}}_m(\mathbb{C}_q)_{0} \cap \widehat{\mathfrak{so}}_m(\mathbb{C}_q).
 \]

Next we introduce some linear functionals $\eta_{\bm\mu,\bm{Z}}$ on $\mathfrak{g}_0$.
Let $i,\ell$ be two positive integers such that $1\leq i\leq m$ and $1\leq \ell\leq n$.
Set
\begin{equation*}
J_{i,\ell}^{\bm Z}:= \{r\in\bm Z\mid 1\leq (r-\frac{1}{2}+\epsilon)m+i\leq \ell\},
\end{equation*}
\begin{equation*}
\bar{J}_{i,\ell}^{\bm Z}:=\{r\in\bm Z\mid 1\leq (-r+\frac{1}{2}-\epsilon)m-i+1\leq \ell\},
\end{equation*}
and
\begin{equation*}
\bar{\ell}_1:=
\begin{cases}
  \frac{\ell_1}{2}m+\ell_2 & \mbox{if } \ell_1 \textrm{ is even} \\
  \frac{\ell_1+1}{2}m & \mbox{if } \ell_1 \textrm{ is odd}
\end{cases},\qquad
\bar{\ell}_2:=
\begin{cases}
  \frac{\ell_1}{2}m & \mbox{if } \ell_1 \textrm{ is even}\\
  \frac{\ell_1-1}{2}m+\ell_2 & \mbox{if } \ell_1 \textrm{ is odd}
\end{cases},
\end{equation*}
where $\ell_1,\ell_2$ are the nonnegative integers such that
\[\ell=\ell_1m+\ell_2,\quad \ell_1
\in\mathbb{N},\quad 1\leq \ell_2\leq m.\]
We also set
  \begin{equation}\label{eq:defepsilon}
\epsilon:=
  \begin{cases}
    0 & \mbox{if } \bm Z=\mathbb{Z}+\frac{1}{2} \\
    \frac{1}{2} & \mbox{if }\bm{Z}=\mathbb{Z}
  \end{cases}.
\end{equation}

When $\mathfrak{g}=\widehat{\mathfrak{gl}}_m(\mathbb{C}_q)$, for every $\bm\mu=(\mu_1,\mu_2,\dots,\mu_n)\in \mathcal{R}(\mathrm{GL}_n)$ with
\begin{equation}
\label{eq:muformGLn}\mu_1\ge\mu_2\ge\cdots\ge\mu_p>\mu_{p+1}=\cdots=\mu_{s-1}=0>\mu_{s}\ge\cdots\ge\mu_n,\end{equation}
we define $\eta_{\bm\mu,\bm Z}$ to be the linear functional on $\mathfrak{g}_{0}$ such that $\eta_{\bm\mu,\bm Z}(\bm c)=-n$ and
\begin{equation*}
\begin{split}
 &\eta_{\bm\mu,\bm Z}(E_{i,i}t_1^b)\\
 =&\sum_{k=1}^{p-1}(\mu_k-\mu_{k+1})\sum_{r\in J_{i,k}^{\bm Z}}q^{-br}
+\mu_p\sum_{r\in J_{i,p}^{\bm Z}}q^{-br}+\mu_{s}\sum_{r\in\bar{J}_{i,n-s+1}^{\bm Z}}q^{-br}\\
&-\sum_{k=s+1}^{n}(\mu_{k-1}-\mu_{k})\sum_{r\in\bar{J}_{i,n-k+1}^{\bm Z}}q^{-br}
+\epsilon n-n\omega_{\bm Z}(b),
\end{split}
\end{equation*}
where $1\le i\le m$ and $b\in \Z$.

When $\mathfrak{g}=\widehat{\mathfrak{sp}}_{2m}(\C_q))$, for every $\bm\mu=(\mu_1,\mu_2,\dots,\mu_n)\in \mathcal{R}(\mathrm{O}_{n})$,
 define the linear functional $\eta_{\bm\mu,\bm{Z}}$ on $\mathfrak{g}_0$
by letting $\eta_{\bm\mu,\bm{Z}}(\bm{c})=-n$ and
\begin{eqnarray*}
\eta_{\bm\mu,\bm Z}(f_{i,i}(0,b))=\sum_{k=1}^{n}(\mu_k-\mu_{k+1})(\sum_{r\in J_{i,\bar{k}_1}^{\bm Z}}q^{-br}-\sum_{r\in \bar{J}_{i,\bar{k}_2}^{\bm Z}}q^{-br})
+\epsilon n-n\omega_{\bm Z}(b),
\end{eqnarray*}
where $1\le i\le m$, $b\in \Z$ and $\mu_{n+1}=0$.

When $\mathfrak{g}=\widehat{\mathfrak{so}}_m(\C_q)$, for every $\bm\mu=(\mu_1,\mu_2,\dots,\mu_{2n})\in \mathcal{R}(\mathrm{Sp}_{2n})$ with  $\bm{Z}=\mathbb{Z}+\frac{1}{2}$, define
$\eta_{\bm\mu,\bm{Z}}$ to be the linear functional on $\mathfrak{g}_0$ such that $\eta_{\bm\mu,\bm Z}(\bm c)=-n$ and
\begin{equation*}
\begin{split}
 \eta_{\bm\mu,\bm Z}(e_{i,i}(0,b))
 =\sum_{k=1}^{n}(\mu_k-\mu_{k+1})\sum_{r\in J_{i,k}^{\bm Z}}q^ {-br}-\sum_{k=1}^{n}(\mu_k-\mu_{k+1})\sum_{r\in J_{m+1-i,k}^{\bm Z}}q^{br}-2n\omega_{\bm Z}(b),
\end{split}
\end{equation*}
where $1\le i\le m$ and $b\in \Z$.

For $\lambda\in \mathfrak{g}_0^*$, we  set (see \eqref{eq:LgQg})
\[ L_{\mathfrak{g},\bm{Z}}(\lambda):=L_{\mathfrak{g},Q(\mathfrak{g}),\succeq_{\Pi(\mathfrak{g},\bm{Z})}}(\lambda).
\]
In this notation,  we have the following irreducible highest weight modules for toroidal Lie algebras $\widehat{\mathfrak{gl}}_m(\mathbb{C}_q)$,
$\widehat{\mathfrak{sp}}_{2m}(\mathbb{C}_q)$
 or $\widehat{\mathfrak{so}}_m(\mathbb{C}_q)$, which are needed in next subsection:
\begin{itemize}
\item $L_{\widehat{\mathfrak{gl}}_m(\mathbb{C}_q),\bm{Z}}(\eta_{\bm\mu,\bm{Z}})$\quad for all\ $\bm\mu\in \mathcal{R}(\mathrm{GL}_n)$;
\item $L_{\widehat{\mathfrak{sp}}_{2m}(\mathbb{C}_q),\bm{Z}}(\eta_{\bm\mu,\Z})$\quad for all\ $\bm\mu\in \mathcal{R}(\mathrm{O}_n)$;
\item $L_{\widehat{\mathfrak{so}}_m(\mathbb{C}_q),\Z+\frac{1}{2}}(\eta_{\bm\mu,\Z+\frac{1}{2}})$\quad for all\ $\bm\mu\in \mathcal{R}(\mathrm{Sp}_{2n})$.
\end{itemize}

\subsection{Main results}
We start with the definition of Howe dual pairs (\cite{GW}).
Recall that a module for a complex reductive group is called  locally regular if it is a direct sum of irreducible regular submodules.

\begin{dfnt} Let $\mathrm{G}$ be a complex reductive group,  $\mathcal{L}$  a complex Lie algebra, and $W$  a complex vector space.

$\mathrm{(i)}$ We say that $W$ is a locally regular $(\mathrm{G},\mathcal{L})$-module if there is a $\mathrm{G}$-module action as well as
an $\mathcal{L}$-module action on it such that these two actions commute, and as a $\mathrm{G}$-module, $W$ is locally regular.

$\mathrm{(ii)}$ We say that $(\mathrm{G}, \mathcal{L})$ is a Howe dual pair on $W$ if
\begin{itemize}
  \item $W$ is a locally regular $(\mathrm{G},\mathcal{L})$-module;
  \item for every $i\in I$, the $\mathcal{L}$-submodule $\textrm{Hom}_\mathrm{G}(W_i,W)$ of $W$ is irreducible; and
  \item for any  $i, j \in I$, $\textrm{Hom}_\mathrm{G}(W_i,W)\cong \textrm{Hom}_\mathrm{G}(W_j,W)$ as $\mathcal{L}$-modules if and only if $i = j$,
\end{itemize}
where
$W_i, i\in I$ exhaust all non-isomorphic irreducible regular $\mathrm{G}$-submodules of $W$.
\end{dfnt}

For example, recall the classical dual pairs $(\mathfrak{gl}_n,\mathfrak{gl}_m)$ and $(\mathfrak{so}_n,\mathfrak{sp}_{2m})$ in $\mathfrak{sp}_{2N}$.
By taking restriction, the oscillator $\mathfrak{sp}_{2N}$-module $\mathcal{F}_N$ becomes an $\mathfrak{so}_n\oplus \mathfrak{sp}_{2m}$-module.
As an $\mathfrak{so}_n$-module, $\mathcal{F}_N$ can be integrated to an $\mathrm{SO}_n$-module and extended to a (locally regular) $\mathrm{O}_n$-module.
Then  $(\mathrm{O}_n,\mathfrak{sp}_{2m})$ forms a Howe dual pair on $\mathcal{F}_N$.
We remark that, as an $\mathfrak{sp}_{2m}$-module, $\mathcal{F}_N$ can not be integrated to an $\mathrm{Sp}_{2m}$-module.

Similarly, by taking restriction, $\mathcal{F}_N$ becomes a  $(\mathfrak{gl}_n\oplus \mathfrak{gl}_m)$-module.
  As a $\mathfrak{gl}_n$-module, $\mathcal{F}_N$ can be integrated  to a $\mathrm{GL}_n$-module.
  Up to a central action of $\mathrm{GL}_n$ if necessary, $(\mathrm{GL}_n,\mathfrak{gl}_m)$  becomes a Howe dual pair on $\mathcal{F}_N$ as well.

Now we turn to consider the toroidal Lie algebras case.
Recall the three dual pairs $(\mathfrak{gl}_n,\mathfrak{gl}_m(\C_q))$, $(\mathfrak{so}_n,\mathfrak{sp}_{2n}(\C_q))$ and
$(\mathfrak{sp}_{2n},\mathfrak{so}_m(\C_q))$  in the toroidal Lie algebra  $\mathfrak{sp}_{2N}(\C_q)$ given in Proposition \ref{prop:dualpairs}.
Note that the embedding from  $\mathfrak{gl}_m(\C_q)$ (resp.\,$\mathfrak{sp}_{2m}(\C_q)$; resp.\,$\mathfrak{so}_m(\C_q)$)
 to $\mathfrak{sp}_{2N}(\C_q)$
can be lifted to an embedding from  $\widehat{\mathfrak{gl}}_m(\C_q)$ (resp.\,$\widehat{\mathfrak{sp}}_{2m}(\C_q)$; resp.\,$\widehat{\mathfrak{so}}_m(\C_q)$)
 to $\widehat{\mathfrak{sp}}_{2N}(\C_q)$ with $\bm{c}\mapsto n\bm{c}$.
 We obtain in this way the following three pairs of mutually commutative subalgebras in   $\widehat{\mathfrak{sp}}_{2N}(\C_q)$:
\begin{eqnarray} \label{eq:pairsinhatspwNcq}
 (\mathfrak{gl}_n,\widehat{\mathfrak{gl}}_m(\C_q)),\qquad\  (\mathfrak{so}_n, \widehat{\mathfrak{sp}}_{2m}(\C_q)),\qquad\
 (\mathfrak{sp}_{2n},\widehat{\mathfrak{so}}_m(\C_q)).
\end{eqnarray}

 By taking restriction to the first pair in \eqref{eq:pairsinhatspwNcq}, the oscillator $\widehat{\mathfrak{sp}}_{2N}(\C_q)$-module
  $\mathcal{F}_N(\bm{Z})$ becomes a $\mathfrak{gl}_n\oplus \widehat{\mathfrak{gl}}_m(\C_q)$-module.
As a $\mathfrak{gl}_n$-module,  $\mathcal{F}_N(\bm{Z})$ can be
  integrated to a $\mathrm{GL}_n$-module.
 When
 $\bm{Z}=\mathbb{Z}+\frac{1}{2}$, the resulting $\mathrm{GL}_n$-module  is locally regular.
 Thus, one obtains a canonical locally regular $(\mathrm{GL}_n,\widehat{\mathfrak{gl}}_m(\C_q))$-module structure on  $\mathcal{F}_N(\mathbb{Z}+\frac{1}{2})$.
For the case that $\bf{Z}=\mathbb{Z}$,  just like the classical case,
the $\mathrm{GL}_n$-module $\mathcal{F}_N(\bm{Z})$ is a direct sum of  irreducible  highest weight modules, which are not regular if $m$ is odd.
To make it a locally regular $\mathrm{GL}_n$-module when $m$ is odd, we  modify the action of $\mathfrak{gl}_n$ on $\mathcal{F}_N(\mathbb{Z})$ as follows:
\[E_{i,j}\mapsto \sum_{k=1}^m\sum_{r\in\mathbb{Z}}:\psi_k^i(-r)\bar{\psi}_k^j(r):-\delta_{i,j}\frac{m}{2}\]
for $1\le i,j\le n$, recalling the original action of $\mathfrak{gl}_n$ on $\mathcal{F}_N(\mathbb{Z})$ is given by
\[E_{i,j}\mapsto \sum_{k=1}^m\sum_{r\in\mathbb{Z}}:\psi_k^i(-r)\bar{\psi}_k^j(r):\]
for $1\le i,j\le n$.
With this modification, $\mathcal{F}_N(\mathbb{Z})$ is a $\mathfrak{gl}_n\oplus \widehat{\mathfrak{gl}}_m(\C_q)$-module as well, and
as a $\mathfrak{gl}_n$-module, it
can be integrated to a locally regular $\mathrm{GL}_n$-module.
Then we obtain in this way a locally regular $(\mathrm{GL}_n,\widehat{\mathfrak{gl}}_m(\C_q))$-module structure on  $\mathcal{F}_N(\mathbb{Z})$.

To be more precise, set
  \begin{eqnarray*}\label{eq:defUmn}
  U_m:=\mathbb{C}^{m}\otimes\mathbb{C}[t],\qquad U_m^n:=(\mathbb{C}^{n}\otimes U_m)\oplus ((\mathbb{C}^{n})^{\ast}\otimes U_m).
  \end{eqnarray*}
 View $U_m^n$ as a $\mathfrak{gl}_{n}$-module such that $\mathfrak{gl}_{n}$ acts naturally on $\mathbb{C}^{n}$ and its dual space $(\mathbb{C}^{n})^{\ast}$,
   and acts trivially on $U_m$.
   Then the symmetric algebra $\mathrm{S}(U_m^n)$ of $U_m^n$ is naturally a $\mathfrak{gl}_{n}$-module, and can be
   integrated to a locally regular $\mathrm{GL}_n$-module.
Let $\{e_1,e_2,\ldots,e_n\}$ be a basis   of $\mathbb{C}^{n}$, $\{e^1,e^2,\ldots,e^n\}$  its dual basis in $(\mathbb{C}^{n})^{\ast}$, and
   $\{w^1,w^2,\ldots,w^m\}$ a basis of  $\mathbb{C}^{m}$.
One can check that $\mathcal{F}_N(\bm{Z})$ is isomorphic to $\mathrm{S}(U_m^n)$ as $\mathfrak{gl}_{n}$-modules (and hence as $\mathrm{GL}_n$-modules).
The module isomorphism is given as follows
\begin{equation}\label{eq:glnmodiso}
\psi_i^k(r+2\epsilon)|0\rangle=e_{k}\otimes w^{i}\otimes t^{-r-\frac{1}{2}-\epsilon},\quad \bar{\psi}_i^k(r)|0\rangle=e^{k}\otimes w^{i}\otimes t^{-r-\frac{1}{2}-\epsilon}
\end{equation}
for $1\leq i\leq m$, $1\leq k\leq n$ and $r\in \bm{Z}$ with $r<0$, where $\epsilon$ is as in \eqref{eq:defepsilon}.

Similarly, by taking restriction to the second pair in \eqref{eq:pairsinhatspwNcq}, the oscillator $\widehat{\mathfrak{sp}}_{2N}(\C_q)$-module
  $\mathcal{F}_N(\bm{Z})$ becomes an $\mathfrak{so}_n\oplus \widehat{\mathfrak{sp}}_{2m}(\C_q)$-module.
Note that \eqref{eq:glnmodiso} also affords an $\mathfrak{so}_n$-module isomorphism from  $\mathcal{F}_N(\bm{Z})$ to  $\mathrm{S}(U_m^n)|_{\mathfrak{so}_n}$.
Thus, as an $\mathfrak{so}_n$-module module, $\mathcal{F}_N(\bm{Z})$ can be integrated to
an $\mathrm{SO}_n$-module and extended to a locally regular $\mathrm{O}_n$-module.
Then we obtain a locally regular $(\mathrm{O}_n,\widehat{\mathfrak{sp}}_{2m}(\C_q))$-module structure on $\mathcal{F}_N(\bm{Z})$.

Finally, by taking restriction to the third pair in \eqref{eq:pairsinhatspwNcq}, the oscillator $\widehat{\mathfrak{sp}}_{2N}(\C_q)$-module
  $\mathcal{F}_N(\bm{Z})$ becomes a $\mathfrak{sp}_{2n}\oplus \widehat{\mathfrak{so}}_{m}(\C_q)$-module.
  When $\bm{Z}=\mathbb{Z}$, similar to the classical case, as an $\mathfrak{sp}_{2n}$-module, $\mathcal{F}_N(\bm{Z})$ can not be integrated to an $\mathrm{Sp}_{2n}$-module.
However,  when $\bm Z=\mathbb{Z}+\frac{1}{2}$, the $\mathfrak{sp}_{2n}$-module $\mathcal{F}_N(\bm{Z})$ integrates to
a locally regular  $\mathrm{Sp}_{2n}$-module, and so is a locally regular $(\mathrm{Sp}_{2n}, \widehat{\mathfrak{so}}_{m}(\C_q))$-module.
In fact, set
\begin{eqnarray*}\label{eq:deftildeUmn}
\tilde{U}_m^n:=\mathbb{C}^{2n}\otimes U_m
\end{eqnarray*}
to be viewed as an $\mathfrak{sp}_{2n}$-module such that $\mathfrak{sp}_{2n}$ acts naturally on $\mathbb{C}^{2n}$ and acts trivially on $U_m$.
Then, as $\mathfrak{sp}_{2n}$-modules, $\mathcal{F}_N(\mathbb{Z}+\frac{1}{2})$ is isomorphic to
the symmetric algebra $\mathrm{S}(\tilde{U}_m^n)$.
The isomorphism is given by
\begin{eqnarray}\label{eq:sp2nmodiso}
\psi_i^k(r)|0\rangle=e_k\otimes w^{m+1-i}\otimes t^{-r-\frac{1}{2}},\quad \bar{\psi}_i^k(r)|0\rangle=-e_{n+k}\otimes w^i \otimes t^{-r-\frac{1}{2}}
\end{eqnarray}
for $1\leq i\leq m$, $1\leq k\leq n$ and $r\in\mathbb{Z}+\frac{1}{2}$ with $r<0$.

In summary,
let
$(\mathrm{G},\mathfrak{g})$ be one of the following pairs
\begin{eqnarray}\label{eq:pariGg}
 (\mathrm{GL}_n,\widehat{\mathfrak{gl}}_m(\C_q)),\qquad\  (\mathrm{O}_n, \widehat{\mathfrak{sp}}_{2m}(\C_q)),\qquad\
 (\mathrm{Sp}_{2n},\widehat{\mathfrak{so}}_m(\C_q)).
\end{eqnarray}
Then we have shown that,
except the case that $(\mathrm{G},\mathfrak{g})=(\mathrm{Sp}_{2n},\widehat{\mathfrak{so}}_m(\C_q))$
and $\mathbf{Z}=\mathbb{Z}$,   $\mathcal{F}_N(\mathbf{Z})$  is naturally a locally regular $(\mathrm{G},\mathfrak{g})$-module.

As the main result of this paper, we have the following generalization of the classical Howe dual pairs on $\mathcal{F}_N$, whose proof will be
given in next section.

\begin{thm}\label{thm:main} Let $N,n,m$ be positive integers such that $N=nm$, let $(\mathrm{G},\mathfrak{g})$ be one of the  pairs
 in \eqref{eq:pariGg}, and let $\bm{Z}=\Z$ or $\Z+\frac{1}{2}$. Then,
except the case that $(\mathrm{G},\mathfrak{g})=(\mathrm{Sp}_{2n},\widehat{\mathfrak{so}}_m(\C_q))$
and $\mathbf{Z}=\mathbb{Z}$, $(\mathrm{G},\mathfrak{g})$ forms a Howe dual pair on $\mathcal{F}_N(\bm{Z})$.
Furthermore, as a locally regular $(\mathrm{G},\mathfrak{g})$-module, we have the following  multiplicity-free decomposition:
\begin{eqnarray}\label{eq:maindec}
\mathcal{F}_N(\bm{Z})=\bigoplus_{\bm\mu\in \mathcal{R}(\mathrm{G})} L_{\mathrm{G}}(\bm\mu)
\otimes L_{\mathfrak{g},\bm{Z}}(\eta_{\bm\mu,\bm{Z}}).
\end{eqnarray}
\end{thm}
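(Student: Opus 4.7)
The plan is to (i) use the classical $\mathrm{G}$-isotypic decomposition of $\mathcal{F}_N(\bm Z)$, (ii) construct explicit joint highest-weight vectors $v_{\bm\mu}$, and (iii) show each $\mathrm{G}$-multiplicity space is an irreducible highest-weight $\mathfrak{g}$-module generated by $v_{\bm\mu}$.

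For (i), via \eqref{eq:glnmodiso} and \eqref{eq:sp2nmodiso} I would identify $\mathcal{F}_N(\bm Z)$ as a $\mathrm{G}$-module with a symmetric algebra on a polynomial-extended standard $\mathrm{G}$-representation and apply classical Howe duality to obtain the multiplicity-free $\mathrm{G}$-isotypic decomposition
\[\mathcal{F}_N(\bm Z) \,=\, \bigoplus_{\bm\mu\in\mathcal{R}(\mathrm{G})} L_\mathrm{G}(\bm\mu)\otimes M_{\bm\mu},\]
with every $\bm\mu\in\mathcal{R}(\mathrm{G})$ appearing (the polynomial extension provides arbitrarily many copies of the standard module). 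Each $M_{\bm\mu}$ is nonzero and, since $\mathfrak{g}$ commutes with $\mathrm{G}$, is a $\mathfrak{g}$-submodule.

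For (ii), I would exhibit for each $\bm\mu$ an explicit joint highest-weight vector $v_{\bm\mu}\in L_\mathrm{G}(\bm\mu)\otimes M_{\bm\mu}$: a vector of $\mathrm{H}_\mathrm{G}$-weight $\bm\mu$, annihilated by $\mathrm{N}_\mathrm{G}^+$ and $\mathfrak{g}_{+,\bm Z}$, with $\mathfrak{g}_0$-weight $\eta_{\bm\mu,\bm Z}$. Such $v_{\bm\mu}$ would be built as products of creation operators $\psi_i^k(-r),\bar\psi_i^k(-r)$ indexed combinatorially by the Young diagram of $\bm\mu$ with suitable skew-symmetrizations. The annihilations reduce to positive-mode vacuum annihilation and the symmetries of the tableau; the $\mathfrak{g}_0$-weight computation, which involves the $q$-power sums $\sum_r q^{-br}$ over $J_{i,\ell}^{\bm Z}$, $\bar J_{i,\ell}^{\bm Z}$ normal-ordered against $\omega_{\bm Z}(b)$, is a $q$-deformed bookkeeping calculation constituting the content of Propositions \ref{pr:dualpair1}--\ref{pr:dualpair3}.

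For (iii), by construction $\mathcal{U}(\mathfrak{g})v_{\bm\mu}\subseteq M_{\bm\mu}$ is a highest-weight $\mathfrak{g}$-module of highest weight $\eta_{\bm\mu,\bm Z}$, which by Lemma \ref{Lirr} has irreducible quotient $L_{\mathfrak{g},\bm Z}(\eta_{\bm\mu,\bm Z})$. To upgrade this to $M_{\bm\mu}\cong L_{\mathfrak{g},\bm Z}(\eta_{\bm\mu,\bm Z})$, I would show (a) $\mathcal{U}(\mathfrak{g})v_{\bm\mu}=M_{\bm\mu}$ and (b) $M_{\bm\mu}$ is $\mathfrak{g}$-irreducible. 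A natural approach to both is to exploit a classical ``dual'' subalgebra contained in $\mathfrak{g}$ (arising from the classical Howe duality of step (i) extended to polynomial coefficients) that acts irreducibly on each $M_{\bm\mu}$, forcing (a) and (b). Multiplicity-freeness across $\bm\mu$ is inherited from the $\mathrm{G}$-isotypic grading and is also reflected in the injectivity of $\bm\mu\mapsto\eta_{\bm\mu,\bm Z}$ (which uses that $q$ is not a root of unity, so the $q$-power sums separate the indexing sets).

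The hardest part is step (iii)'s irreducibility/generation argument: the toroidal algebra $\mathfrak{g}$ is neither affine nor Kac--Moody, so standard machinery does not apply directly, and identifying the ``dual'' inside $\mathfrak{g}$ and verifying its irreducible action on $M_{\bm\mu}$ is delicate. A secondary obstacle is the case-by-case combinatorial construction of $v_{\bm\mu}$ in step (ii) for the three pairs in \eqref{eq:pariGg} and each $\bm Z\in\{\Z,\Z+\tfrac12\}$; notably, the $(\mathrm{GL}_n,\widehat{\mathfrak{gl}}_m(\C_q))$ case with $\bm Z=\Z$ and $m$ odd requires the central shift $-\delta_{i,j}m/2$ in the $\mathfrak{gl}_n$-action (see the discussion preceding Theorem \ref{thm:main}) to place the module in the regular category.
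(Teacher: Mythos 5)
Your steps (i) and (ii) do match the paper: the identification of $\mathcal{F}_N(\bm Z)$ with $\mathrm{S}(U_m^n)$ resp.\ $\mathrm{S}(\tilde U_m^n)$, the explicit determinant-type joint highest weight vectors, and the $q$-bookkeeping for $\eta_{\bm\mu,\bm Z}$ are exactly Propositions \ref{pr:dualpair1}, \ref{pr:dualpair2} and \ref{pr:dualpair3}. The genuine gap is in your step (iii). The ``classical dual subalgebra contained in $\mathfrak{g}$'' that you want to act irreducibly on each multiplicity space $M_{\bm\mu}$ does not exist: the commutant of $\mathrm{G}$ in the Weyl algebra is generated by the individual quadratics such as $\sum_{k}\psi_i^k(a)\bar\psi_j^k(b)$ (see \eqref{eq:invgl}--\eqref{eq:invsp}), an infinite-rank classical algebra, whereas the toroidal algebra $\mathfrak{g}$ acts only through the infinite $q$-weighted sums \eqref{eq:glmmodact}--\eqref{eq:sommodact}. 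Thus the containment runs the wrong way for your argument -- the image of $\mathfrak{g}$ sits inside (a completion of) the invariant algebra, not the other way around -- and irreducibility of $M_{\bm\mu}$ under the full invariant algebra does not formally descend to the smaller Lie algebra $\mathfrak{g}$; that descent is precisely the nontrivial point.

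What the paper supplies at this juncture is Lemma \ref{le:howedual}: every $\mathfrak{g}$-submodule of $\mathcal{F}_N(\bm Z)$ is automatically a $\mathcal{W}_N(\bm Z)^{\mathrm{G}}$-submodule, and every $\mathfrak{g}$-module homomorphism between such submodules is a $\mathcal{W}_N(\bm Z)^{\mathrm{G}}$-homomorphism. This is proved by fixing a vector $u$, noting that only finitely many modes $r\in T$ act nontrivially, applying the toroidal generators (e.g.\ $E_{i,j}t_0^at_1^b$) for $b=1,\dots,\mathrm{Card}\,T$, and inverting the resulting Vandermonde matrix -- which is where the hypothesis that $q$ is not a root of unity is used essentially (not merely to separate the weights $\eta_{\bm\mu,\bm Z}$, as in your sketch). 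With this lemma, the classical Howe duality \cite[Theorem 4.2.1]{GW} for the pair $(\mathrm{G},\mathcal{W}_N(\bm Z)^{\mathrm{G}})$ transfers verbatim to $(\mathrm{G},\mathfrak{g})$ (Proposition \ref{prop:howedual}), giving both irreducibility of each $M_{\bm\mu}$ as a $\mathfrak{g}$-module and the pairwise non-isomorphism; your step (ii) vectors then identify $M_{\bm\mu}\cong L_{\mathfrak{g},\bm Z}(\eta_{\bm\mu,\bm Z})$ via Lemma \ref{Lirr} and show that every $\bm\mu\in\mathcal{R}(\mathrm{G})$ occurs, completing \eqref{eq:maindec}. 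Without a replacement for Lemma \ref{le:howedual}, your (a) and (b) remain unproved.
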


\section{Proof of Theorem \ref{thm:main}}
This section is devoted to the proof of Theorem \ref{thm:main}.
Throughout this section, let $N,m,n$, $(\mathrm{G},\mathfrak{g})$ and $\bm{Z}$ be as in Theorem \ref{thm:main}.

\subsection{Howe dual pairs on $\mathcal{F}_N(\bm{Z})$}
The main goal of this subsection is to prove the first assertion in Theorem \ref{thm:main}. Namely, we will prove the following result.

\begin{prpt}\label{prop:howedual}
$(\mathrm{G},\mathfrak{g})$ is a Howe dual pair on $\mathcal{F}_N(\bm Z)$.
\end{prpt}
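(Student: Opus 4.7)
The plan is to reduce the toroidal statement to a combination of (i) classical finite-dimensional Howe duality for the pair $(\mathrm{G},\mathfrak{g}_0^{\mathrm{fin}})$ acting on the appropriate symmetric algebra, and (ii) an irreducibility/cyclicity argument for the commuting toroidal action. Concretely, using the $\mathrm{G}$-module isomorphism $\mathcal{F}_N(\bm Z)\cong \mathrm{S}(U_m^n)$ from \eqref{eq:glnmodiso}, or $\mathcal{F}_N(\bm Z)\cong \mathrm{S}(\tilde U_m^n)$ from \eqref{eq:sp2nmodiso} in the symplectic case, the classical Howe dualities for the finite-dimensional pairs $(\mathrm{GL}_n,\mathfrak{gl}_m)$, $(\mathrm{O}_n,\mathfrak{sp}_{2m})$ and $(\mathrm{Sp}_{2n},\mathfrak{so}_m)$ on symmetric algebras give a canonical $\mathrm{G}$-isotypic decomposition
\[
\mathcal{F}_N(\bm Z)=\bigoplus_{\bm\mu\in \mathcal R(\mathrm G)} L_{\mathrm G}(\bm\mu)\otimes M_{\bm\mu},
\]
where $M_{\bm\mu}:=\mathrm{Hom}_{\mathrm G}(L_{\mathrm G}(\bm\mu),\mathcal F_N(\bm Z))$ is automatically a $\mathfrak g$-module. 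Since $\mathrm{G}$-types are parametrized by distinct $\bm\mu$, condition (iii) in the definition of a Howe dual pair will be immediate once (ii) is proved, so the heart of the matter is the irreducibility of each $M_{\bm\mu}$.

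For the irreducibility, I would first exhibit in each $\mathrm{G}$-isotypic component an explicit joint highest weight vector $v_{\bm\mu}$, characterized by being a $\mathrm{G}$-highest weight vector for $L_{\mathrm G}(\bm\mu)$ that is also annihilated by $\mathfrak g_{+,\bm Z}$ (these vectors will be constructed explicitly as monomials in the generators $\psi_i^k(r),\bar\psi_i^k(r)$, along the lines of the analogous construction in Propositions \ref{pr:dualpair1}--\ref{pr:dualpair3}). Then $v_{\bm\mu}$ corresponds via the isotypic decomposition to a highest weight vector in $M_{\bm\mu}$ for the triangulated structure on $\mathfrak g$ with weight exactly $\eta_{\bm\mu,\bm Z}$, a point that reduces to a direct computation of the action of the elements in $\mathfrak g_0$ on $v_{\bm\mu}$, using the explicit formulas \eqref{ho} for $\rho_{\bm Z}$ and the constant $\omega_{\bm Z}(b)$. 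In particular $M_{\bm\mu}$ has a nonzero quotient isomorphic to $L_{\mathfrak g,\bm Z}(\eta_{\bm\mu,\bm Z})$ by Lemma \ref{Lirr}.

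To close the argument, I would prove that $v_{\bm\mu}$ is $\mathfrak g$-cyclic in $M_{\bm\mu}$, i.e.\ $\mathcal U(\mathfrak g).v_{\bm\mu}=M_{\bm\mu}$. The natural approach is a double-commutant/filtration argument: equip $\mathcal F_N(\bm Z)$ with the natural $t_0$-degree filtration, observe that the associated graded is essentially the classical $\mathrm{GL}_n\times\bigotimes\mathfrak g_0$-module whose multiplicity spaces are already known to be irreducible cyclic modules generated by the image of $v_{\bm\mu}$ (by classical Howe duality and the standard highest weight theory of the loop-Heisenberg component in $\mathfrak g_0$), and lift cyclicity to the filtered module. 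Combined with the surjection $M_{\bm\mu}\twoheadrightarrow L_{\mathfrak g,\bm Z}(\eta_{\bm\mu,\bm Z})$ already obtained, this forces $M_{\bm\mu}\cong L_{\mathfrak g,\bm Z}(\eta_{\bm\mu,\bm Z})$, which is irreducible. Distinctness of $M_{\bm\mu}$ for different $\bm\mu$ then follows from the injectivity $\bm\mu\mapsto\eta_{\bm\mu,\bm Z}$ together with the last sentence before Proposition \ref{prop:dualpairs}'s Section 3.3.

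The main obstacle I expect is precisely the cyclicity step: proving that the full toroidal Lie algebra (not just its degree-zero part, for which classical duality suffices) acts transitively enough on $v_{\bm\mu}$ to recover all of $M_{\bm\mu}$. The fact that we have identified $\mathfrak g$ with the full centralizer of $\mathrm G$ in $\widehat{\mathfrak{sp}}_{2N}(\C_q)$ in Proposition \ref{prop:dualpairs} is crucial here, because it allows us to translate cyclicity of $M_{\bm\mu}$ under $\mathfrak g$ into cyclicity of the whole $\mathrm G$-isotypic component under $\mathrm G\times\mathfrak g$, which in turn reduces (via the polarized Weyl algebra structure of $\mathcal W_N(\bm Z)$) to a standard generation statement for the oscillator module.
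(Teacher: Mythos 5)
There is a genuine gap, and it sits exactly where you anticipated: the cyclicity/irreducibility of the multiplicity spaces $M_{\bm\mu}=\mathrm{Hom}_{\mathrm G}(L_{\mathrm G}(\bm\mu),\mathcal F_N(\bm Z))$ under $\mathfrak g$. Your proposed filtration argument cannot work as sketched: the $t_0$-degree-zero subalgebra of $\mathfrak g$ preserves each $t_0$-graded piece of $\mathcal F_N(\bm Z)$, so $\mathcal U(\mathfrak g_{(0)}).v_{\bm\mu}$ lives in a single graded component and ``classical Howe duality for the degree-zero part'' says nothing about generating the full infinite tower $M_{\bm\mu}$; what is needed is precisely that the negative-$t_0$-degree toroidal operators generate enough of the commutant, which is the statement being proved, not an input. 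A telltale sign of the gap is that your argument never uses the hypothesis that $q$ is not a root of unity, yet this hypothesis is essential: for $q$ a root of unity the image of $\mathfrak g$ inside $\mathrm{End}(\mathcal F_N(\bm Z))$ is a much smaller subalgebra of the commutant of $\mathrm G$ and irreducibility of the $M_{\bm\mu}$ would fail. Also note that the explicit highest weight vectors and the weights $\eta_{\bm\mu,\bm Z}$ are not needed for this proposition; in the paper they enter only later, to identify the constituents in the decomposition \eqref{eq:maindec}, while the non-isomorphism of multiplicity spaces for distinct $\bm\mu$ comes for free from the dual pair property rather than from injectivity of $\bm\mu\mapsto\eta_{\bm\mu,\bm Z}$ (which your route would additionally have to verify).

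The paper's proof supplies the missing mechanism. By classical invariant theory (first fundamental theorem), the algebra $\mathcal W_N(\bm Z)^{\mathrm G}$ of $\mathrm G$-invariants in the Weyl algebra is generated by the quadratic elements in \eqref{eq:invgl}--\eqref{eq:invsp}, and by the general result \cite[Theorem 4.2.1]{GW} the pair $(\mathrm G,\mathcal W_N(\bm Z)^{\mathrm G})$ is a Howe dual pair on $\mathcal F_N(\bm Z)$. The key step (Lemma \ref{le:howedual}) is then to show that a subspace of $\mathcal F_N(\bm Z)$ is a $\mathfrak g$-submodule if and only if it is a $\mathcal W_N(\bm Z)^{\mathrm G}$-submodule, and similarly for module homomorphisms: applying the operators $E_{i,j}t_0^at_1^b$ (resp.\ $f_{i,j}(a,b)$, $g_{i,j}(a,b)$, $h_{i,j}(a,b)$, $e_{i,j}(a,b)$) to a fixed vector for finitely many values of $b$ and using that only finitely many modes act nontrivially, one gets a Vandermonde system in the powers $q^{-rb}$, whose invertibility (here genericity of $q$ is used) lets one solve for each individual invariant $\sum_k:\psi_i^k(a-r)\bar\psi_j^k(r):$ acting on that vector. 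This transfers the Howe dual pair property from $\mathcal W_N(\bm Z)^{\mathrm G}$ to $\mathfrak g$ in one stroke, and is the concrete ``generation statement'' your last paragraph gestures at but does not carry out. If you want to salvage your outline, replace the filtration step by exactly this extraction argument; as written, the proposal does not establish condition (ii) of the definition.
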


Since $\mathcal{W}_N(\bm{Z})$ is stable under the conjugate
action of $\mathrm{G}$ (as subalgebras of $\mathrm{End}(\mathcal{F}_N(\bm Z))$),
 $\mathcal{W}_N(\bm{Z})$ is a $\mathrm{G}$-module with
\[g.x=g x g^{-1}
\]
for $g\in \mathrm{G}$ and $x\in \mathcal{W}_N(\bm{Z})$.
Denote by $\mathcal{W}_N(\bm Z)^{\mathrm{G}}$ the subalgebra of $\mathcal{W}_N(\bm Z)$ fixed by $\mathrm{G}$.

In what follows we are going to determine to the typical generators in  $\mathcal{W}_N(\bm Z)^{\mathrm{G}}$ by using the classical invariant theory. Set
\[ W_m:=\mathbb{C}^{m}\otimes \mathbb{C}[t,t^{-1}],\ W_m^n:=(\C^n\otimes W_m)\oplus ((\C^n)^*\otimes W_m),\
\tilde{W}_m^n:=\C^{2n}\otimes W_m.
\]
When $\mathrm{G}=\mathrm{GL}_n$ or $\mathrm{O}_n$, view $W_m^n$ as a $\mathrm{G}$-module such that $\mathrm{G}$ acts naturally on
$\C^n$ and its dual space $(\C^n)^*$, and acts trivially on $\C[t,t^{-1}]$.
Similar to \eqref{eq:glnmodiso}, one can check that  the $\mathrm{G}$-module $\mathcal{W}_N(\bm{Z})$ is isomorphic to the symmetric algebra
$\mathrm{S}(W_m^n)$,  where the isomorphism
is given by
\begin{equation}\label{eq:glnmodisow}
\psi_i^k(r)=e_{k}\otimes w^{i}\otimes t^{-r-\frac{1}{2}+\epsilon},\quad \bar{\psi}_i^k(r)=e^{k}\otimes w^{i}\otimes t^{-r-\frac{1}{2}-\epsilon}
\end{equation}
for $1\leq i\leq m$, $1\leq k\leq n$ and $r\in \bm{Z}$.

When $\mathrm{G}=\mathrm{Sp}_{2n}$ (and so $\bm{Z}=\Z+\frac{1}{2}$), view $\tilde{W}_m^n$ as a
$\mathrm{G}$-module such that $\mathrm{G}$ acts naturally on
$\C^{2n}$ and  acts trivially on $\C[t,t^{-1}]$.
Similar to \eqref{eq:sp2nmodiso},
the $\mathrm{G}$-module $\mathcal{W}_N(\bm{Z})$ is isomorphic to $\mathrm{S}(\tilde{W}_m^n)$, where the isomorphism
is given by
\begin{equation}\label{eq:sp2nmodisow}
\psi_i^k(r)=e_k\otimes w^{m+1-i}\otimes t^{r-\frac{1}{2}},\quad \bar{\psi}_i^k(r)=-e_{n+k}\otimes w^{i}\otimes t^{r-\frac{1}{2}}
\end{equation}
for $1\leq i\leq m$, $1\leq k\leq n$ and $r\in \Z+\frac{1}{2}$.

In view of the modules isomorphisms \eqref{eq:glnmodisow} and \eqref{eq:sp2nmodisow}, one can conclude from the classical invariant theory (\cite{GW}) that
\begin{itemize}
\item  $\mathcal{W}_N(\bm Z)^{\mathrm{GL}_n}$ is generated by the  elements
\begin{equation}\label{eq:invgl}
\sum_{k=1}^{n}\psi_i^k(a)\bar{\psi}_j^k(b),\qquad 1\leq i,j\leq m,\ a,b\in\bm Z.
\end{equation}
\item $\mathcal{W}_N(\bm Z)^{\mathrm{O}_n}$ is generated by the elements
  \begin{equation}\label{eq:invo}
\sum_{k=1}^{n}\psi_i^k(a)\bar{\psi}_j^k(b),\quad \sum_{k=1}^{n}\psi_i^k(a)\psi_j^{n+1-k}(b),\quad \sum_{k=1}^{n}\bar{\psi}_i^k(a)\bar{\psi}_j^{n+1-k}(b),
\quad 1\leq i,j\leq m,\ a,b\in\bm Z.
 \end{equation}
\item $\mathcal{W}_N(\bm Z)^{\mathrm{Sp}_{2n}}$ (with $\bm{Z}=\Z+\frac{1}{2}$) is generated by the  elements
\begin{equation}\label{eq:invsp}
  \sum_{k=1}^{n}(\psi_i^k(a)\bar{\psi}_j^k(b)-\bar{\psi}_{m+1-i}^k(a)\psi_{m+1-j}^k(b)),\quad 1\leq i,j\leq m,\ a,b\in\bm Z.
  \end{equation}
\end{itemize}

On the other hand, by using the embeddings \eqref{eq:dualpair1}-\eqref{eq:dualpair3} and the module action \eqref{ho}, we have
\begin{itemize}
\item the $\widehat{\mathfrak{gl}}_m(\C_q))$-module action on $\mathcal{F}_N(\bm{Z})$ is given by $\bm{c}=-n$ and
\begin{equation}\label{eq:glmmodact}
E_{i,j}t_0^at_1^b=\sum_{k=1}^{n}\sum_{r\in\bm Z}q^{-br}:\psi_i^k(a-r)\bar{\psi}_j^k(r):-n\delta_{a,0}\delta_{i,j}\omega_{\bm Z}(b)
\end{equation}
for $1\le i,j\le m$ and $a,b\in \Z$.

\item the $\widehat{\mathfrak{sp}}_{2m}(\C_q))$-module action on $\mathcal{F}_N(\bm{Z})$ is given by $\bm{c}=-n$ and
\begin{equation}\begin{split}\label{eq:sp2mmodact}
f_{i,j}(a,b)&=\sum_{k=1}^{n}\sum_{r\in\bm Z}q^{-br}:\psi_i^k(a-r)\bar{\psi}_j^k(r):-n\delta_{a,0}\delta_{i,j}\omega_{\bm Z}(b),\\
 g_{i,j}(a,b)&=\sum_{k=1}^{n}\sum_{r\in\bm Z}q^{-br}:\psi_i^k(a-r)\psi_j^{n+1-k}(r):,\\
h_{i,j}(a,b)&=\sum_{k=1}^{n}\sum_{r\in\bm Z}q^{-br}:\bar{\psi}_i^k(a-r)\bar{\psi}_j^{n+1-k}(r):
\end{split}
\end{equation}
for $1\le i,j\le m$ and $a,b\in \Z$.

\item the $\widehat{\mathfrak{so}}_m(\C_q))$-module action on $\mathcal{F}_N(\bm{Z})$ (with $\bm{Z}=\Z+\frac{1}{2}$) is given by $\bm{c}=-n$ and
\begin{equation}\label{eq:sommodact}
e_{i,j}(a,b)=\sum_{k=1}^{n}\sum_{r\in\bm Z}q^{-br}(:\psi_i^k(a-r)\bar{\psi}_j^k(r):-:\psi_{m+1-j}^k(r)\bar{\psi}_{m+1-i}^k(a-r):)
-2n\delta_{a,0}\delta_{i,j}\omega_{\bm Z}(b)
\end{equation}
for $1\le i,j\le m$ and $a,b\in \Z$.
\end{itemize}

 Then we have the following result.

 \begin{lemt}\label{le:howedual}
 Let $U$ be a subspace of $\mathcal{F}_N(\bm Z)$. Then $U$ is a $\mathfrak{g}$-submodule if and only if $U$ is a $\mathcal{W}_N(\bm Z)^{\mathrm{G}}$-submodule. Furthermore, two $\mathfrak{g}$-submodules of $\mathcal{F}_N(\bm Z)$ are isomorphic if and only if they are isomorphic as $\mathcal{W}_N(\bm Z)^{\mathrm{G}}$-submodules.
 \end{lemt}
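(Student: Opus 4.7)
The plan is to identify the two module structures by showing that, on $\mathcal{F}_N(\bm Z)$, each generator of $\mathcal{W}_N(\bm Z)^{\mathrm{G}}$ and each element of $\mathfrak{g}$ act in ways that are interchangeable after a \emph{Vandermonde inversion} exploiting the assumption that $q$ is not a root of unity. Once this interchangeability is in place at the level of orbits of individual vectors, both assertions of the lemma (stability and isomorphism) will follow formally.

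\textbf{Easy direction.} If $U$ is $\mathcal{W}_N(\bm Z)^{\mathrm{G}}$-stable, then inspection of \eqref{eq:glmmodact}--\eqref{eq:sommodact} shows that for any $v\in U$ each basis element of $\mathfrak{g}$ sends $v$ to a \emph{finite} sum of the generators \eqref{eq:invgl}--\eqref{eq:invsp} (in the appropriate case) applied to $v$, plus a scalar from the normal-ordering \eqref{no} and the $\omega_{\bm Z}(b)$ correction. Finiteness is automatic because $\psi_i^k(r)$ and $\bar\psi_i^k(r)$ kill any fixed vector for $|r|$ large. Thus $\mathfrak{g}.U\subset U$.

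\textbf{Reverse direction.} Suppose $U$ is $\mathfrak{g}$-stable and fix $v\in U$. In the $\mathrm{GL}_n$ case, for indices $i,j$ and $a,b\in\bm Z$, set $c=a+b\in\Z$ and consider $T_d:=E_{i,j}t_0^ct_1^d$ for varying $d\in\Z$. By \eqref{eq:glmmodact},
\[
T_d.v \;=\; \sum_{r\in S(v)} q^{-dr}\Bigl(\sum_{k=1}^n :\!\psi_i^k(c-r)\bar\psi_j^k(r)\!:\Bigr).v,
\]
where $S(v)\subset\bm Z$ is the finite set of modes contributing to $v$. Since $q$ is not a root of unity, the values $\{q^{-r}\mid r\in S(v)\}$ are pairwise distinct, so choosing $|S(v)|$ values of $d$ yields an invertible Vandermonde system. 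Inversion expresses $\bigl(\sum_k :\!\psi_i^k(c-r)\bar\psi_j^k(r)\!:\bigr).v$ as a finite linear combination of the $T_d.v \in U$; removing the normal ordering costs only a scalar, which preserves $U$. This shows the generators \eqref{eq:invgl} send $U$ into itself, and exactly the same Vandermonde inversion applied to \eqref{eq:sp2mmodact} (respectively \eqref{eq:sommodact}) handles the generators \eqref{eq:invo} (respectively \eqref{eq:invsp}) for the $\mathrm{O}_n$ and $\mathrm{Sp}_{2n}$ cases.

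\textbf{Isomorphism statement.} Given a $\mathfrak{g}$-module isomorphism $\phi:U_1\to U_2$, the plan is to apply the same Vandermonde trick to the intertwining identity $\phi(T_d.v)=T_d.\phi(v)$. Expanding both sides in $q^{-dr}$ over the finite index set $S(v)\cup S(\phi(v))$ and using that the resulting identity holds for all $d\in\Z$ forces the $r$-by-$r$ equation
\[
\phi\Bigl(\bigl(\textstyle\sum_k :\!\psi_i^k(c-r)\bar\psi_j^k(r)\!:\bigr).v\Bigr) \;=\; \bigl(\textstyle\sum_k :\!\psi_i^k(c-r)\bar\psi_j^k(r)\!:\bigr).\phi(v),
\]
so $\phi$ intertwines every generator of $\mathcal{W}_N(\bm Z)^{\mathrm{G}}$ (again with the obvious modifications in the $\mathrm{O}_n$ and $\mathrm{Sp}_{2n}$ cases). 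The converse is immediate from the easy direction applied to the graph of $\phi$. The only real obstacle I anticipate is the bookkeeping of scalar corrections arising from normal ordering and from the $\omega_{\bm Z}(b)$-term, together with the sign/shift differences for the $g_{i,j}(a,b)$, $h_{i,j}(a,b)$ and $e_{i,j}(a,b)$ generators; these are routine once one checks the Vandermonde argument applies uniformly to each quadratic mode type.
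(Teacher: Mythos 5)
Your proposal is correct and follows essentially the same route as the paper: the easy direction by matching the toroidal generators \eqref{eq:glmmodact}--\eqref{eq:sommodact} with the invariants \eqref{eq:invgl}--\eqref{eq:invsp}, and the reverse direction and the isomorphism statement by inverting a Vandermonde system in the $t_1$-degree (using that $q$ is not a root of unity) after subtracting the scalar $\omega_{\bm Z}$-correction. The only nitpick is your justification of finiteness: the modes kill a fixed vector only for large \emph{positive} index, and it is the normal ordering that guarantees only finitely many $r$ contribute, which is exactly the finite set $T$ used in the paper.
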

 \begin{proof}
When $(\mathrm{G},\mathfrak{g})=(\mathrm{GL}_n,\widehat{\mathfrak{gl}}_m(\C_q))$ (resp.\,$(\mathrm{O}_n,\widehat{\mathfrak{sp}}_{2m}(\C_q))$;
resp.\,$(\mathrm{Sp}_{2n},\widehat{\mathfrak{so}}_m(\C_q))$),
by comparing \eqref{eq:invgl} (resp.\,\eqref{eq:invo}; resp.\,\eqref{eq:invsp}) with
\eqref{eq:glmmodact} (resp.\,\eqref{eq:sp2mmodact}; resp.\,\eqref{eq:sommodact}), it follows that every
$\mathcal{W}_N(\bm Z)^{\mathrm{G}}$-submodule of $\mathcal{F}_N(\bm Z)$  is also a $\mathfrak{g}$-submodule.
Furthermore, every module homomorphism between two $\mathcal{W}_N(\bm Z)^{\mathrm{G}}$-submodules of $\mathcal{F}_N(\bm Z)$
is also a $\mathfrak{g}$-module homomorphism.

Conversely, let $U,U'$ be two  $\mathfrak{g}$-submodules of $\mathcal{F}_N(\bm{Z})$, and let $f:U\rightarrow U'$ be a $\mathfrak{g}$-module
homomorphism.
Let $1\leq i,j\leq m,~a\in \mathbb{Z}$ and $u\in U$.
When $(\mathrm{G},\mathfrak{g})=(\mathrm{GL}_n,\widehat{\mathfrak{gl}}_m(\C_q))$, pick a  finite subset $T$ of $\bm{Z}$ such that
\[\sum_{k=1}^{n}:\psi_i^k(a-r)\bar{\psi}_j^k(r):u=0
\]
for all  $r\notin T$. Consider the equations
\begin{eqnarray*}
&&(E_{i,j}t_0^at_1^b-\delta_{a,0}\delta_{i,j}\omega_{\bm Z}(b)\bm{c}).u=\sum_{k=1}^{n}\sum_{r\in\bm Z}q^{-rb}:\psi_i^k(a-r)\bar{\psi}_j^k(r):u\\
&=& \sum_{r\in T} q^{-rb} (\sum_{k=1}^{n}:\psi_i^k(a-r)\bar{\psi}_j^k(r):u) \in U\end{eqnarray*}
  for $1\leq b\leq \mathrm{Card}\,T$.
   As $q$ is not a root of unity, the coefficient matrix of the above equations is a Vandermonde matrix.
 By solving the equations, we get
  $$\sum_{k=1}^{n}:\psi_i^k(a-r)\bar{\psi}_j^k(r):u\in U$$
  for $1\leq i,j\leq m$, $a\in\mathbb{Z}$ and $r\in\bm Z$.
  This together with \eqref{eq:invgl} gives that $U$ is $\mathcal{W}_N(\bm Z)^{\mathrm{GL}_n}$-submodule of $\mathcal{F}_N(\bm Z)$.
  Furthermore, by using the fact that
  \[f((E_{i,j}t_0^at_1^b-\delta_{a,0}\delta_{i,j}\omega_{\bm Z}(b)\bm{c}).u)
  =(E_{i,j}t_0^at_1^b+\delta_{a,0}\delta_{i,j}\omega_{\bm Z}(b)\bm{c}).f(u),
  \]
  we have
  \[
  \sum_{r\in T} q^{-rb} f(\sum_{k=1}^{n}:\psi_i^k(a-r)\bar{\psi}_j^k(r):u)=
   \sum_{r\in T} q^{-rb} (\sum_{k=1}^{n}:\psi_i^k(a-r)\bar{\psi}_j^k(r):f(u))
  \]
   for $1\leq b\leq \mathrm{Card}\,T$.
Thus, by solving the above equations, we have
\[f(\sum_{k=1}^{n}:\psi_i^k(a-r)\bar{\psi}_j^k(r):u)=\sum_{k=1}^{n}:\psi_i^k(a-r)\bar{\psi}_j^k(r):f(u)\]
for $1\leq i,j\leq m$, $a\in\mathbb{Z}$ and $r\in\bm Z$, which together with  \eqref{eq:invgl} gives that
$f$ is a $\mathcal{W}_N(\bm Z)^{\mathrm{GL}_n}$-module homomorphism as well.

Similarly, when $(\mathrm{G},\mathfrak{g})=(\mathrm{O}_n,\widehat{\mathfrak{sp}}_{2m}(\C_q))$, by solving the equations
\begin{eqnarray*}(f_{i,j}(a,b)-\delta_{a,0}\delta_{i,j}\omega_{\bm Z}(b)\bm{c}).u &=&\sum_{k=1}^{n}\sum_{r\in\bm Z}q^{-br}:\psi_i^k(a-r)\bar{\psi}_j^k(r):u\in U,\\
g_{i,j}(a,b).u&=&\sum_{k=1}^{n}\sum_{r\in\bm Z}q^{-br}:\psi_i^k(a-r)\psi_j^{n+1-k}(r):u\in U,\\
h_{i,j}(a,b).u&=&\sum_{k=1}^{n}\sum_{r\in\bm Z}q^{-br}:\bar{\psi}_i^k(a-r)\bar{\psi}_j^{n+1-k}(r):u\in U,
\end{eqnarray*}
we find that
\[:\psi_i^k(a-r)\bar{\psi}_j^k(r):u,\ :\psi_i^k(a-r)\psi_j^{n+1-k}(r):u,\ :\bar{\psi}_i^k(a-r)\bar{\psi}_j^{n+1-k}(r):u\in U\]
for $1\leq i,j\leq m$, $a\in\mathbb{Z}$ and $r\in\bm Z$.
  This together with \eqref{eq:invo} gives that $U$ is $\mathcal{W}_N(\bm Z)^{\mathrm{O}_n}$-submodule of $\mathcal{F}_N(\bm Z)$.
One can also prove in this way that $f$ is a $\mathcal{W}_N(\bm Z)^{\mathrm{O}_n}$-module homomorphism.

  When $(\mathrm{G},\mathfrak{g})=(\mathrm{Sp}_{2n},\widehat{\mathfrak{so}}_m(\C_q)))$, one can  deduce from
\eqref{eq:sommodact} that
\[\sum_{k=1}^{n}(:\psi_i^k(a-r)\bar{\psi}_j^k(r):-:\psi_{m+1-j}^k(r)\bar{\psi}_{m+1-i}^k(a-r):)u\in U\]
  for $1\leq i,j\leq m$, $a\in\mathbb{Z}$ and $r\in \Z+\frac{1}{2}$.
  Thus, it follows from \eqref{eq:invsp} that $U$ is $\mathcal{W}_N(\Z+\frac{1}{2})^{\mathrm{Sp}_{2n}}$-submodule of
   $\mathcal{F}_N(\Z+\frac{1}{2})$. Similarly, one can check that $f$ is a $\mathcal{W}_N(\bm Z)^{\mathrm{Sp}_{2n}}$-module homomorphism.
\end{proof}

\textbf{Proof of Proposition \ref{prop:howedual}}:
In view of \cite[Theorem 4.2.1]{GW}, $(\mathrm{G}, \mathcal{W}_N(\bm Z)^{\mathrm{G}})$ is a Howe dual pair on $\mathcal{F}_N(\bm Z)$.
It then follows from Lemma $\ref{le:howedual}$ that  $(\mathrm{G},\mathfrak{g})$ is a Howe dual pair on $\mathcal{F}_N(\bm Z)$ as well,
which proves Proposition \ref{prop:howedual}.

\subsection{$(\mathrm{GL}_n,\widehat{\mathfrak{gl}}_m(\mathbb{C}_q))$ and $(\mathrm{Sp}_{2n},\widehat{\mathfrak{so}}_{m}(\C_q))$-highest
 weight vectors in $\mathcal{F}_N(\bm{Z})$}
The rest of this section is devoted to a proof of  the multiplicity-free decomposition \eqref{eq:maindec}.
Set  $\mathrm{G}'=\mathrm{G}$ if
$\mathrm{G}=\mathrm{GL}_n$ or $\mathrm{Sp}_{2n}$, and $\mathrm{G}'=\mathrm{SO}_n$ if $\mathrm{G}=\mathrm{O}_n$.
Then the proof of \eqref{eq:maindec}
will be reduced to determine the $(\mathrm{G}',\mathfrak{g})$-highest weight vectors in
$\mathcal{F}_N(\bm{Z})$ in the following sense:

\begin{dfnt}
Let $0\ne v\in \mathcal{F}_N(\bm{Z})$ and $(\bm\mu,\eta)\in \mathcal{R}(\mathrm{G}')\times \mathfrak{g}_0^*$.
 We say that $v$ is a $(\mathrm{G}',\mathfrak{g})$-highest weight vector in $\mathcal{F}_N(\bm{Z})$ with
highest weight $(\bm\mu,\eta)$ if
\[ \mathrm{N}^+_{\mathrm{G}'}.v=v, \quad\mathfrak{g}_{+,\bm{Z}}.v=0,\quad h.v=h^{\bm\mu}v,\quad x.v=\eta(x)v
\]
for all $h\in \mathrm{H}_{\mathrm{G}'}$  and $x\in \mathfrak{g}_0$.
\end{dfnt}

In this subsection, we determine the $(\mathrm{GL}_n,\widehat{\mathfrak{gl}}_m(\mathbb{C}_q))$
and $(\mathrm{Sp}_{2n},\widehat{\mathfrak{so}}_{m}(\C_q))$-highest weight vectors in $\mathcal{F}_N(\bm{Z})$.
For convenience, we define $\phi_r^k,\bar{\phi_r}^k$, $1\leq k\leq n,~r\in\bm Z$ by
$$\phi_{(r+\frac{1}{2}-\epsilon)m-i+\frac{1}{2}-\epsilon}^k=\psi_i^k(r),\quad \bar{\phi}_{(r-\frac{1}{2}+\epsilon)m+i-\frac{1}{2}-\epsilon}^k=\bar{\psi}_i^k(r)$$
for $1\leq i\leq m,~r\in\bm Z$. Then for $1\leq k,\ell\leq n,~a,b\in\bm Z$ we have
\begin{eqnarray}\label{eq:commu}
\label{gla}\phi_a^k\phi_b^\ell-\phi_b^\ell\phi_a^k=0,\quad \bar{\phi}_a^k\bar{\phi}_b^\ell-\bar{\phi}_b^\ell\bar{\phi}_a^k=0,\quad
\bar{\phi}_a^k\phi_b^\ell-\phi_b^\ell\bar{\phi}_a^k=\delta_{k,\ell}\delta_{a+b,-2\epsilon}.
\end{eqnarray}

For convenience, for $a\in\mathbb{Z}$, $1\leq i\leq m$, $r\in\bm Z$, we set $\alpha(r,a,i)=(a-r+\frac{1}{2}-\epsilon)m-i+1$, $\beta(r,a,i)=(r+a-\frac{1}{2}+\epsilon)m+j$.
 And, for a positive integer $b$, we set $I_b=\{1,2,\ldots,b\}$.

Let $\ell=1,2,\dots,n$. We define two elements $A_{\ell,{\bm Z}}$ and $\bar{A}_{\ell,\bm Z}$ of the Weyl algebra $\mathcal{W}_N(\bm Z)$ in terms of $\phi_a^k$ and $\bar{\phi}_a^k$ by
\begin{equation}\label{v1}
 A_{\ell,\bm Z}:=\textrm{det}\ \begin{pmatrix}
   \phi_{-\frac{1}{2}-\epsilon}^1 & \phi_{-\frac{3}{2}-\epsilon}^1 & \cdots & \phi_{-\ell+\frac{1}{2}-\epsilon}^1 \\
   \phi_{-\frac{1}{2}-\epsilon}^2 & \phi_{-\frac{3}{2}-\epsilon}^2 & \cdots & \phi_{-\ell+\frac{1}{2}-\epsilon}^2 \\
   \cdots& \cdots & \ddots & \cdots \\
   \phi_{-\frac{1}{2}-\epsilon}^\ell & \phi_{-\frac{3}{2}-\epsilon}^\ell & \cdots & \phi_{-\ell+\frac{1}{2}-\epsilon}^\ell
 \end{pmatrix}
\end{equation}
and
\begin{equation}\label{v2}
 \bar{A}_{\ell,\bm Z}:=\textrm{det}\ \begin{pmatrix}
   \bar{\phi}_{-\frac{1}{2}-\epsilon}^n & \bar{\phi}_{-\frac{3}{2}-\epsilon}^n & \cdots & \bar{\phi}_{\ell-n-\frac{1}{2}-\epsilon}^n \\
   \bar{\phi}_{-\frac{1}{2}-\epsilon}^{n-1} & \bar{\phi}_{-\frac{3}{2}-\epsilon}^{n-1} & \cdots & \bar{\phi}_{\ell-n-\frac{1}{2}-\epsilon}^{n-1} \\
   \cdots& \cdots & \ddots & \cdots \\
   \bar{\phi}_{-\frac{1}{2}-\epsilon}^{\ell} & \bar{\phi}_{-\frac{3}{2}-\epsilon}^{\ell} & \cdots & \bar{\phi}_{\ell-n-\frac{1}{2}-\epsilon}^{\ell}
 \end{pmatrix}.
\end{equation}

We start with the following result.

\begin{lemt}\label{le:dualpair1le1}
Let $1\leq i,j\leq m$, $a\in\mathbb{Z}$, $r\in \bm{Z}$ and $\ell=1,2,\dots,n$.
Assume that either $a>0$ or $a=0$ and $i<j$. Then
\begin{equation}\label{eq:dualpair1eq1}
[\sum_{k=1}^{n} \psi_i^k(a-r)\bar{\psi}_j^k(r),A_{\ell,\bm Z}]=
\begin{cases}
A_{\ell,\bm Z}^{i,j,r}&\mbox{if } \beta(r,0,j)\in I_\ell\\
0&\text{otherwise}
\end{cases},
\end{equation}
where $A_{\ell,\bm Z}^{i,j,r}$ is the determinant obtained  by replacing the $\beta(r,0,j)$-column in $A_{\ell,\bm Z}$ by the column vector
      $$(\phi_{\alpha(r,a,i)-\frac{1}{2}-\epsilon}^1,\phi_{\alpha(r,a,i)-\frac{1}{2}-\epsilon}^2,\dots,
      \phi_{\alpha(r,a,i)-\frac{1}{2}-\epsilon}^\ell)^t. $$
Furthermore, for the case that $\beta(r,0,j)\in I_\ell$, we have
\begin{eqnarray}\label{eq:dualpair1eq2}
[A_{\ell',\bm Z},A_{\ell,\bm Z}^{i,j,r}]=[\bar{A}_{\ell'',\bm Z},A_{\ell,\bm Z}^{i,j,r}]=0\quad\text{and}\quad
A_{\ell,\bm Z}^{i,j,r}|0\rangle=0,
\end{eqnarray}
where $\ell',\ell''=1,2,\dots,n$ with $\ell'\ge \ell$ and $\ell''>\ell$.
\end{lemt}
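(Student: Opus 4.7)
The plan is to exploit the fact that all $\phi^k_a$'s commute pairwise (and likewise all $\bar\phi^k_a$'s), so that $A_{\ell,\bm Z}$, $\bar A_{\ell'',\bm Z}$ and $A_{\ell,\bm Z}^{i,j,r}$ may be manipulated as honest determinants over a commutative ring; the only nontrivial commutator from \eqref{eq:commu} is $[\bar\phi^k_a,\phi^\ell_b]=\delta_{k,\ell}\delta_{a+b,-2\epsilon}$. To prove \eqref{eq:dualpair1eq1}, I first translate the summand into $\phi$/$\bar\phi$ language as $\sum_k \phi^k_{\alpha-\frac12-\epsilon}\bar\phi^k_{\beta-\frac12-\epsilon}$, where $\alpha=\alpha(r,a,i)$ and $\beta=\beta(r,0,j)$. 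Since each $\phi^k_{\alpha-\frac12-\epsilon}$ commutes with $A_{\ell,\bm Z}$, the bracket reduces to $\sum_k \phi^k_{\alpha-\frac12-\epsilon}[\bar\phi^k_{\beta-\frac12-\epsilon},A_{\ell,\bm Z}]$. The derivation $[\bar\phi^k_{\beta-\frac12-\epsilon},-]$ acts on the commutative polynomial $A_{\ell,\bm Z}$ by picking up exactly the $(k,\beta)$-entry, so it vanishes unless $k\le\ell$ and $\beta\in I_\ell$; in that range it equals $(-1)^{k+\beta}$ times the $(k,\beta)$-minor. Multiplying by $\phi^k_{\alpha-\frac12-\epsilon}$ and summing over $k$ is then precisely the cofactor expansion along column $\beta$ of the determinant in which that column has been replaced by $(\phi^1_{\alpha-\frac12-\epsilon},\dots,\phi^\ell_{\alpha-\frac12-\epsilon})^t$, recovering $A_{\ell,\bm Z}^{i,j,r}$.

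The first two identities in \eqref{eq:dualpair1eq2} are immediate. For $[A_{\ell',\bm Z},A_{\ell,\bm Z}^{i,j,r}]=0$, both factors lie in the commutative subalgebra generated by the $\phi$'s. For $[\bar A_{\ell'',\bm Z},A_{\ell,\bm Z}^{i,j,r}]=0$, the rows of $\bar A_{\ell'',\bm Z}$ use only indices $k\in\{\ell'',\dots,n\}$ while $A_{\ell,\bm Z}^{i,j,r}$ uses only $k\in\{1,\dots,\ell\}$; the hypothesis $\ell''>\ell$ makes these index sets disjoint, so every pair of entries commutes.

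The main obstacle is the vacuum annihilation $A_{\ell,\bm Z}^{i,j,r}|0\rangle=0$, which is where the positivity hypothesis on $(a,i,j)$ finally enters. A direct computation from the definitions yields $\alpha(r,a,i)+\beta(r,0,j)=am+j-i+1$, and the hypothesis ``$a>0$, or $a=0$ and $i<j$'' is equivalent to $am+j-i\ge 1$, i.e.\ $\alpha+\beta\ge 2$. I split on the integer $\alpha$: if $\alpha\ge 1$, then each entry $\phi^k_{\alpha-\frac12-\epsilon}$ of the replaced column satisfies the vacuum-annihilation condition in both cases $\bm Z=\Z$ and $\bm Z=\Z+\tfrac12$, and pushing one such factor to the right in every summand of the determinant expansion (using commutativity) kills the vacuum; if $\alpha\le 0$, set $s:=1-\alpha\ge 1$, so the replaced column coincides entry-wise with the original column $s$. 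The delicate check is that $s$ indexes a legitimate column distinct from $\beta$: $s\ne\beta$ because $\alpha+\beta\ge 2>1$, and $s\le\ell-1$ because the hypothesis $\beta\in I_\ell$ forces $\alpha\ge 2-\beta\ge 2-\ell$. In the duplicate-column case the determinant vanishes by standard commutative linear algebra, and the lemma follows.
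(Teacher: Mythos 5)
Your proposal is correct and follows essentially the same route as the paper's (terse) proof: rewrite the operators in terms of the commuting $\phi$'s and the derivation $[\bar{\phi}^k_{\beta-\frac12-\epsilon},-]$ to get the cofactor-expansion identity \eqref{eq:dualpair1eq1}, use disjointness of the superscript/index sets for the two bracket identities, and use the inequality $\alpha(r,a,i)-\frac12-\epsilon\ge -\ell+\frac32-\epsilon$ (equivalently $\alpha+\beta\ge 2$ with $\beta\le\ell$) for the vacuum annihilation. The only difference is that you spell out the case split the paper leaves implicit — replaced column annihilates $|0\rangle$ when $\alpha\ge 1$, versus a repeated column (with the needed checks $s\le\ell-1$ and $s\ne\beta$) when $\alpha\le 0$ — and these details are correct.
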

\begin{proof}
The verification of  \eqref{eq:dualpair1eq1} is straightforward and is omitted.
For the equation \eqref{eq:dualpair1eq2}, we have from \eqref{eq:commu} that $[A_{\ell',\bm Z},A_{\ell,\bm Z}^{i,j,r}]=0$ and
 $[\bar{A}_{\ell'',\bm Z},A_{\ell,\bm Z}^{i,j,r}]=0$
 since $\ell''>\ell$.
     If $\beta(r,0,j)\in I_\ell$,
     then we have
     $$\alpha(r,a,i)-\frac{1}{2}-\epsilon=-\beta(r,0,j)+\frac{1}{2}+am+j-i-\epsilon\geq -\ell+\frac{3}{2}-\epsilon,$$
 which implies $A_{\ell,\bm Z}^{i,j,r}|0\rangle=0$.
 \end{proof}

 Similar to Lemma \ref{le:dualpair1le1}, we have the following result.

\begin{lemt}\label{le:dualpair1le1bar}
Let $1\leq i,j\leq m$, $a\in\mathbb{Z}$, $r\in \bm{Z}$ and $\ell=1,2,\dots,n$.
Assume that either $a>0$ or $a=0$ and $i<j$. Then
\begin{equation}
[\sum_{k=1}^{n} \psi_i^k(a-r)\bar{\psi}_j^k(r),\bar{A}_{\ell,\bm Z}]=
\begin{cases}
\bar{A}_{\ell,\bm Z}^{i,j,r}&\mbox{if }\alpha(r,a,i)\in I_{n-\ell+1}\\
0&\mbox{otherwise}
\end{cases},
\end{equation}
 where
 $\bar{A}_{\ell,\bm Z}^{i,j,r}$ is the determinant obtained by replacing the $\alpha(r,a,i)$-column in  $\bar{A}_{\ell,\bm Z}$ by the
column vector
 $$(-\bar{\phi}_{\beta(r,0,j)-\frac{1}{2}-\epsilon}^n,-\bar{\phi}_{\beta(r,0,j)-\frac{1}{2}-\epsilon}^{n-1},
 \dots,-\bar{\phi}_{\beta(r,0,j)-\frac{1}{2}-\epsilon}^{\ell})^t.$$
 Furthermore, for the case that $\alpha(r,a,i)\in I_{n-\ell+1}$, we have
 \begin{eqnarray}
[\bar{A}_{\ell',\bm Z},\bar{A}_{\ell,\bm Z}^{i,j,r}]=0\quad\text{and}\quad
\bar{A}_{\ell,\bm Z}^{i,j,r}|0\rangle=0,
\end{eqnarray}
where $\ell'=1,2,\dots,n$ with $\ell'\ge \ell$.
\end{lemt}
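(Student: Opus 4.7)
The plan is to mirror the proof of Lemma~\ref{le:dualpair1le1} almost verbatim, exploiting the fact that $\bar A_{\ell,\bm Z}$ is built entirely out of the $\bar\phi$-variables (just as $A_{\ell,\bm Z}$ is built from the $\phi$-variables) and that the relations in \eqref{eq:commu} are symmetric between $\phi$ and $\bar\phi$.

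For the first displayed identity, I would first observe that $\bar\psi_j^k(r)$ commutes with every entry of $\bar A_{\ell,\bm Z}$, so the bracket reduces to
\[
\sum_{k=1}^{n}[\psi_i^k(a-r),\bar A_{\ell,\bm Z}]\,\bar\psi_j^k(r).
\]
Rewriting $\psi_i^k(a-r)=\phi^k_{\alpha(r,a,i)-\frac{1}{2}-\epsilon}$ and using $[\phi^k_c,\bar\phi^{k'}_b]=-\delta_{k,k'}\delta_{c+b,-2\epsilon}$ as a derivation on the determinant, $[\phi^k_c,\bar A_{\ell,\bm Z}]$ picks out the unique column whose index matches $\alpha(r,a,i)$, and is nonzero precisely when $k\in\{\ell,\ldots,n\}$ and $\alpha(r,a,i)\in I_{n-\ell+1}$. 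A Laplace expansion then identifies the resulting sum as the determinant obtained by replacing the $\alpha(r,a,i)$-th column of $\bar A_{\ell,\bm Z}$ by the column with entries $-\bar\phi^k_{\beta(r,0,j)-\frac{1}{2}-\epsilon}$ for $k=n,n-1,\ldots,\ell$, that is, as $\bar A_{\ell,\bm Z}^{i,j,r}$.

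For the remaining assertions, $[\bar A_{\ell',\bm Z},\bar A_{\ell,\bm Z}^{i,j,r}]=0$ is immediate from \eqref{eq:commu}, as both determinants are built only out of mutually commuting $\bar\phi$'s. To establish $\bar A_{\ell,\bm Z}^{i,j,r}|0\rangle=0$, I would invoke the arithmetic identity $\alpha(r,a,i)+\beta(r,0,j)=am+j-i+1$; combined with $\alpha(r,a,i)\le n-\ell+1$ and the bound $am+j-i\ge 1$ forced by the hypothesis that either $a>0$ or $a=0$ and $i<j$, this gives $\beta(r,0,j)\ge \ell-n+1$. If $\beta(r,0,j)\ge 1$, every entry of the modified column has index at least $\frac{1}{2}-\epsilon$ and hence annihilates $|0\rangle$; since all entries of the determinant commute, this modified-column entry can be pulled rightmost in each permutation term. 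If instead $\ell-n+1\le\beta(r,0,j)\le 0$, the modified column coincides with the unmodified column at position $1-\beta(r,0,j)\in\{1,\ldots,n-\ell\}$, producing two identical columns; the potential collision $1-\beta(r,0,j)=\alpha(r,a,i)$ would force $am=i-j$, which is ruled out by the hypotheses on $a,i,j$.

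The only mild obstacle is the sign bookkeeping in the Laplace expansion and the verification that $am=i-j$ is the sole exceptional coincidence one must exclude; both are exact parallels of the corresponding steps in Lemma~\ref{le:dualpair1le1}, so no genuinely new input is required.
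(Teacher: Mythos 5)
Your proposal is correct and takes essentially the argument the paper intends (the paper only says the proof is ``similar'' to Lemma~\ref{le:dualpair1le1}): the commutator formula via the derivation property of $\phi^k_c$ on the $\bar\phi$-determinant plus a Laplace expansion along the $\alpha(r,a,i)$-th column, commutativity of the $\bar\phi$'s for $[\bar{A}_{\ell',\bm Z},\bar{A}_{\ell,\bm Z}^{i,j,r}]=0$, and the bound $\beta(r,0,j)\ge \ell-n+1$ with the two cases (all entries of the new column annihilate $|0\rangle$, or the new column duplicates an existing one at position $1-\beta(r,0,j)\neq\alpha(r,a,i)$) for $\bar{A}_{\ell,\bm Z}^{i,j,r}|0\rangle=0$. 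The only cosmetic point is that in the second case the modified column is $-1$ times the column at position $1-\beta(r,0,j)$, so the two columns are proportional rather than identical, which still forces the determinant to vanish.
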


The following result can be  checked directly.

\begin{lemt}\label{le:dualpair1le2}
Let $1\leq p\leq n-1$ and $1\le \ell\le n$. Then we have
\begin{equation*}
  [\sum_{k=1}^{m}\sum_{r\in\bm Z}\psi_k^p(-r)\bar{\psi}_k^{p+1}(r),A_{\ell,\bm Z}]=
  [\sum_{k=1}^{m}\sum_{r\in\bm Z}\psi_k^p(-r)\bar{\psi}_k^{p+1}(r),\bar{A}_{\ell,\bm Z}]=0.
\end{equation*}
\end{lemt}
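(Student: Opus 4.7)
The plan is to exhibit
\[X:=\sum_{k=1}^{m}\sum_{r\in\bm Z}\psi_k^p(-r)\bar{\psi}_k^{p+1}(r)\]
as the image of $E_{p,p+1}\in\mathfrak{gl}_n$ under the composition of the embedding \eqref{eq:dualpair1} with the representation $\rho_{\bm{Z}}$ from \eqref{ho} (the normal ordering is trivial here because $p\ne p+1$), and then to exploit the $\mathfrak{gl}_n$-equivariance of the determinantal expressions \eqref{v1}, \eqref{v2} via the Leibniz rule.

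Using the canonical brackets in \eqref{eq:commu}, a direct one-line computation gives
\[[X,\phi_a^s]=\delta_{p+1,s}\,\phi_a^p,\qquad [X,\bar{\phi}_a^s]=-\delta_{p,s}\,\bar{\phi}_a^{p+1}\]
for every admissible $a$ and every $s\in\{1,\dots,n\}$. The infinite sum over $r$ in $X$ collapses to a single term because of the Kronecker $\delta_{r+r',0}$ built into the canonical relations, so no convergence issue arises. Since commutation with $X$ is a derivation of $\mathcal{W}_N(\bm{Z})$, applying the Leibniz rule row-by-row to \eqref{v1} and \eqref{v2} expresses $[X,A_{\ell,\bm{Z}}]$ and $[X,\bar A_{\ell,\bm{Z}}]$ as sums of determinants with a single row of the original matrix replaced by its commutator with $X$.

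In $A_{\ell,\bm{Z}}$ the rows carry superscripts $1,\dots,\ell$, so only the row with superscript $s=p+1$ can be affected (requiring $p+1\le\ell$); in that event it is replaced by the row already occupying position $s=p$, so the determinant vanishes by having two equal rows. In $\bar A_{\ell,\bm{Z}}$ the rows carry superscripts $\ell,\dots,n$, so only the row with superscript $s=p$ can be affected (requiring $p\ge\ell$); in that event it is replaced by the negative of the already-present row with superscript $p+1$ (and $p+1\le n$ since $p\le n-1$), so the determinant again vanishes by proportional rows. In the complementary range of $p$, no row is affected and the bracket is trivially zero.

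I expect no real obstacle: the only substantive step is the identification of $X$ with the raising element $E_{p,p+1}$, after which the lemma reduces to a single application of the Leibniz rule together with the familiar fact that a $\mathfrak{gl}_n$-raising operator annihilates a Schur-style determinant whose rows carry the appropriate highest-weight labelling. The only bookkeeping that requires care is matching the row-index ranges of the two determinants ($1,\dots,\ell$ for $A_{\ell,\bm{Z}}$ versus $\ell,\dots,n$ for $\bar A_{\ell,\bm{Z}}$) to the $\mathfrak{gl}_n$-action, which is why the ``collapsing row'' scenarios occur at complementary positions and with opposite signs in the two cases.
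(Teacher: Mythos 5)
Your proof is correct: the bracket formulas $[X,\phi_a^s]=\delta_{s,p+1}\,\phi_a^p$ and $[X,\bar{\phi}_a^s]=-\delta_{s,p}\,\bar{\phi}_a^{p+1}$ do follow from \eqref{eq:commu} (with the infinite sum collapsing termwise), and the Leibniz row-replacement argument plus the equal-rows/proportional-rows vanishing is legitimate because all the $\phi$'s, respectively all the $\bar{\phi}$'s, mutually commute, with the row bookkeeping ($p+1\le\ell$ for $A_{\ell,\bm Z}$, $p\ge\ell$ for $\bar{A}_{\ell,\bm Z}$) handled correctly. The paper gives no argument for this lemma, merely asserting it can be checked directly, and your computation is precisely that direct check.
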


For $\ell=1,2,\dots,n$, define two elements $\bm{\mu}_\ell$ and $\bar{\bm{\mu}}_\ell$ in $\mathcal{R}(\mathrm{GL}_n)$ by
\[\bm{\mu}_\ell:=(\underbrace{1,1,\ldots,1}_\ell,0,0,\ldots,0)\quad\text{and}\quad
\bar{\bm{\mu}}_\ell:=(\underbrace{0,0,\ldots,0}_{\ell-1},-1,-1,\ldots,-1).\]

\begin{lemt}\label{le:dualpair1le3}
For $\ell=1,2,\dots,n$ and $h\in \mathrm{H}_{\mathrm{GL}_n}$, we have
\begin{equation*}
  h.A_{\ell,\bm Z}|0\rangle=h^{\bm{\mu}_\ell} A_{\ell,\bm Z}|0\rangle
  \quad\text{and}\quad h.\bar{A}_{\ell,\bm Z}|0\rangle=h^{\bar{\bm{\mu}}_\ell} \bar{A}_{\ell,\bm Z}|0\rangle.
\end{equation*}
\end{lemt}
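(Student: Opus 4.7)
The mechanism is that both $A_{\ell,\bm Z}$ and $\bar A_{\ell,\bm Z}$ are homogeneous with respect to the adjoint action of $\mathrm{H}_{\mathrm{GL}_n}$ on the Weyl algebra $\mathcal{W}_N(\bm Z)$, while (once the central shift present in the $\bm Z = \mathbb Z$ case is absorbed into the modified $\mathfrak{gl}_n$-action) the vacuum $|0\rangle$ is $\mathrm{H}_{\mathrm{GL}_n}$-fixed. The lemma will then follow by weight bookkeeping on the determinantal formulas \eqref{v1}--\eqref{v2}.

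First I would record the commutators of $E_{p,p}$ with the generators. Writing $E_{p,p}$ out via \eqref{eq:dualpair1} and \eqref{ho} and applying the Weyl relations \eqref{eq:commu}, a short normal-ordering computation collapses the double sum to a single term and yields
\[
[E_{p,p},\phi_a^k] = \delta_{p,k}\phi_a^k, \qquad [E_{p,p},\bar\phi_a^k] = -\delta_{p,k}\bar\phi_a^k,
\]
for every admissible $a$ and every $1 \le k,p \le n$. Exponentiating gives $h\phi_a^k h^{-1} = h_k \phi_a^k$ and $h\bar\phi_a^k h^{-1} = h_k^{-1}\bar\phi_a^k$ for every $h = \mathrm{diag}(h_1,\ldots,h_n) \in \mathrm{H}_{\mathrm{GL}_n}$. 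Since the $\phi^k$'s commute pairwise by \eqref{eq:commu}, the determinant \eqref{v1} expands as a sum of products, each containing exactly one factor $\phi^k$ for each $k = 1,\ldots,\ell$; thus every such product, and hence $A_{\ell,\bm Z}$ itself, has $\mathrm{H}_{\mathrm{GL}_n}$-weight $\bm\mu_\ell$, so that $hA_{\ell,\bm Z}h^{-1} = h^{\bm\mu_\ell}A_{\ell,\bm Z}$. Applying the same argument to \eqref{v2}, whose row indices run $k = n, n-1, \ldots, \ell$, gives $h\bar A_{\ell,\bm Z}h^{-1} = h^{\bar{\bm\mu}_\ell}\bar A_{\ell,\bm Z}$.

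It remains to verify $h.|0\rangle = |0\rangle$, that is, $E_{p,p}.|0\rangle = 0$ for every $p$. For $\bm Z = \mathbb Z + \frac{1}{2}$ this is immediate from the normal ordering \eqref{no} and the annihilation conditions defining $|0\rangle$. For $\bm Z = \mathbb Z$ only the $r=0$ summands contribute, and the symmetric ordering combined with $\bar\psi_k^p(0).|0\rangle = 0$ and $[\bar\psi_k^p(0),\psi_k^p(0)] = 1$ makes each term $:\psi_k^p(0)\bar\psi_k^p(0):$ contribute $\frac{1}{2}|0\rangle$, summing to $\frac{m}{2}|0\rangle$ over $k = 1,\ldots,m$; this is precisely the constant cancelled by the $-\delta_{i,j}\frac{m}{2}$ shift in the modified $\mathfrak{gl}_n$-action introduced just before Theorem \ref{thm:main}. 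Combining the two steps,
\[
h.(A_{\ell,\bm Z}|0\rangle) = (hA_{\ell,\bm Z}h^{-1}).(h.|0\rangle) = h^{\bm\mu_\ell}A_{\ell,\bm Z}|0\rangle,
\]
and analogously $h.(\bar A_{\ell,\bm Z}|0\rangle) = h^{\bar{\bm\mu}_\ell}\bar A_{\ell,\bm Z}|0\rangle$. The only mildly subtle point is the zero-mode constant for $\bm Z = \mathbb Z$, which is exactly the reason the modified action was introduced; everything else is straightforward weight counting.
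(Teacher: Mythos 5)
Your proof is correct and follows essentially the same route as the paper: a weight computation showing $E_{p,p}$ scales each $\phi^k$, $\bar\phi^k$ by $\pm\delta_{p,k}$ (hence $A_{\ell,\bm Z}$, $\bar A_{\ell,\bm Z}$ have weights $\bm\mu_\ell$, $\bar{\bm\mu}_\ell$ under conjugation), combined with vanishing of $E_{p,p}$ on the vacuum, where the $\bm Z=\mathbb{Z}$ zero-mode constant $\tfrac{m}{2}$ is cancelled by the modified $\mathfrak{gl}_n$-action. The paper merely packages your two steps into the single identity $E_{p,p}=\sum_{r\ge \frac{1}{2}-\epsilon}\phi_{-r}^p\bar{\phi}_{r}^p+\sum_{r\leq -\frac{1}{2}-\epsilon}\bar{\phi}_{r}^p\phi_{-r}^p$, which absorbs the $-\epsilon m$ shift and the vacuum annihilation at once.
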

\begin{proof}
For $1\leq p \leq n$, as operator on $\mathcal{F}_N(\bm{Z})$, we have
     $$E_{p,p}=\sum_{k=1}^{m}\sum_{r\in\bm Z}:\psi_k^p(-r)\bar{\psi}_k^p(r):-\epsilon m=\sum_{r\ge \frac{1}{2}-\epsilon}\phi_{-r}^p\bar{\phi}_{r}^p+ \sum_{r\leq -\frac{1}{2}-\epsilon}\bar{\phi}_{r}^p\phi_{-r}^p.$$
Using this, it follows that
   \begin{equation*}
   E_{p,p}A_{\ell,\bm Z}=
     \begin{cases}
       A_{\ell,\bm Z}E_{p,p}+A_{\ell,\bm Z} & \mbox{if } p \le \ell \\
       0, & \mbox{otherwise}
     \end{cases},
   \end{equation*}
   and
     \begin{equation*}
   E_{p,p}\bar{A}_{\ell,\bm Z}=
     \begin{cases}
       A_{\ell,\bm Z}E_{p,p}-\bar{A}_{\ell,\bm Z} & \mbox{if }  p\ge \ell \\
       0 & \mbox{otherwise}
     \end{cases}.
   \end{equation*}
 Thus, we find that
    $$h.A_{\ell,\bm Z}|0\rangle=h^{\bm{\mu}} A_{\ell,\bm Z}|0\rangle\quad\text{and}\quad h.\bar{A}_{\ell,\bm Z}|0\rangle=h^{\bar{\bm{\mu}}}\bar{A}_{\ell,\bm Z}|0\rangle$$
   for $h\in \mathrm{H}_{\mathrm{GL}_n}$.
\end{proof}

\begin{lemt}\label{le:dualpair1le4}
For $\ell=1,2,\dots,n$ and $x\in \widehat{\mathfrak{gl}}_m(\mathbb{C}_q)_0$, we have
\begin{equation*}
x.A_{\ell,\bm Z}|0\rangle=\eta_{\bm{\mu}_\ell,\bm{Z}}(x) A_{\ell,\bm Z}|0\rangle\quad \text{and}\quad x.\bar{A}_{\ell,\bm Z}|0\rangle=
\eta_{\bar{\bm{\mu}_\ell},\bm{Z}}(x) \bar{A}_{\ell,\bm Z}|0\rangle.
\end{equation*}
\end{lemt}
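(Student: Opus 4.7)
\textbf{Proof plan for Lemma \ref{le:dualpair1le4}.}
The Cartan $\widehat{\mathfrak{gl}}_m(\mathbb{C}_q)_0$ is spanned by $\bm c$ and the elements $E_{i,i}t_1^b$ for $1\le i\le m$, $b\in \Z$. Since the central element acts on $\mathcal{F}_N(\bm Z)$ by $-n$ (by \eqref{eq:glmmodact}), which agrees with $\eta_{\bm\mu_\ell,\bm Z}(\bm c)=-n=\eta_{\bar{\bm\mu}_\ell,\bm Z}(\bm c)$, it suffices to verify the identities on each $x=E_{i,i}t_1^b$. For this I will write
\[
x.(A_{\ell,\bm Z}|0\rangle)=[x,A_{\ell,\bm Z}]|0\rangle+A_{\ell,\bm Z}\,(x.|0\rangle)
\]
and compute the two summands separately (and analogously for $\bar A_{\ell,\bm Z}|0\rangle$).

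For the vacuum piece, using \eqref{eq:glmmodact} and the normal-ordering convention \eqref{no}, only the term $r=0$ (present only when $\bm Z=\Z$) survives on $|0\rangle$, and a direct check gives $x.|0\rangle=(\epsilon n-n\omega_{\bm Z}(b))|0\rangle$ uniformly in both cases. For the commutator piece, note that in $\phi$-coordinates the operator $x+n\omega_{\bm Z}(b)$ is a sum of terms of the form $q^{-br}:\phi_{\alpha}^k\bar\phi_{\beta}^k:$; since $\phi$'s commute with everything in $A_{\ell,\bm Z}$, and only $\bar\phi_{\beta}^k$ yields a non-trivial commutator with a $\phi_{-s+\frac{1}{2}-\epsilon}^{k}$ (same upper index $k$), one computes from \eqref{eq:commu} that $[\bar\psi_i^k(r),\phi_{-s+\frac{1}{2}-\epsilon}^k]=1$ precisely when $s=(r-\tfrac{1}{2}+\epsilon)m+i$, i.e.\ when $r\in J_{i,\ell}^{\bm Z}$ and the column to be ``hit'' is $s\in\{1,\dots,\ell\}$. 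Moreover for that exact value of $r$ one has the pleasant identity $\psi_i^k(-r)=\phi_{-s+\frac{1}{2}-\epsilon}^k$, which means that the term $\sum_{k=1}^\ell\psi_i^k(-r)[\bar\psi_i^k(r),A_{\ell,\bm Z}]$ reassembles, via cofactor expansion along the $s$-th column, exactly into $A_{\ell,\bm Z}$ itself.

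Collecting these contributions gives $[x,A_{\ell,\bm Z}]=\bigl(\sum_{r\in J_{i,\ell}^{\bm Z}}q^{-br}\bigr)A_{\ell,\bm Z}$, so that altogether $x.(A_{\ell,\bm Z}|0\rangle)=\bigl(\sum_{r\in J_{i,\ell}^{\bm Z}}q^{-br}+\epsilon n-n\omega_{\bm Z}(b)\bigr)A_{\ell,\bm Z}|0\rangle$, which is precisely $\eta_{\bm\mu_\ell,\bm Z}(x)A_{\ell,\bm Z}|0\rangle$ in view of the computation of $\eta_{\bm\mu_\ell,\bm Z}(E_{i,i}t_1^b)$ (the only surviving summand in the defining formula for $\bm\mu_\ell=(1^\ell,0^{n-\ell})$ is the $\mu_p\sum_{r\in J_{i,p}^{\bm Z}}q^{-br}$ term with $p=\ell$).

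The argument for $\bar A_{\ell,\bm Z}|0\rangle$ is entirely parallel, with the roles of $\phi$ and $\bar\phi$ interchanged: now $\psi_i^k(-r)$ contributes the non-trivial commutator with $\bar\phi_{\beta(r,0,i)-\frac{1}{2}-\epsilon}^k$ sitting in column $\alpha$ of $\bar A_{\ell,\bm Z}$, pinning $r$ to the set $\bar J_{i,n-\ell+1}^{\bm Z}$ and producing a contribution $-\sum_{r\in\bar J_{i,n-\ell+1}^{\bm Z}}q^{-br}$ after cofactor expansion (the sign coming from the swap $\phi\leftrightarrow\bar\phi$ in the canonical commutator). Adding the vacuum piece gives exactly $\eta_{\bar{\bm\mu}_\ell,\bm Z}(E_{i,i}t_1^b)$, since for $\bar{\bm\mu}_\ell=(0^{\ell-1},-1^{n-\ell+1})$ only the $\mu_s\sum_{r\in\bar J_{i,n-s+1}^{\bm Z}}q^{-br}$ term (with $s=\ell$, $\mu_s=-1$) is non-vanishing. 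The main book-keeping obstacle is matching the indexing conventions: converting between $(\psi_i^k(r),\bar\psi_i^k(r))$ and $(\phi_a^k,\bar\phi_a^k)$ and verifying that the unique allowed value of $r$ for which $\psi_i^k(-r)$ equals the entry being replaced is indeed $r=(s-i)/m+\tfrac{1}{2}-\epsilon$; once this bookkeeping is settled, the identification with $J_{i,\ell}^{\bm Z}$ and $\bar J_{i,n-\ell+1}^{\bm Z}$ is immediate, and the normal-ordering subtlety at $r=0$ (only relevant when $\bm Z=\Z$) is handled uniformly because $\psi_i^k(0)$ commutes with $[\bar\psi_i^k(0),A_{\ell,\bm Z}]$.
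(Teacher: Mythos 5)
Your proposal is correct and follows essentially the same route as the paper: the paper also writes $E_{i,i}t_1^b$ in the form $\sum_r q^{-br}\sum_k:\psi_i^k(-r)\bar\psi_i^k(r):-n\omega_{\bm Z}(b)$ and deduces the eigenvalue from the same column-replacement/cofactor commutator mechanism (there packaged in Lemmas \ref{le:dualpair1le1} and \ref{le:dualpair1le1bar}), together with the vacuum contribution. You merely carry out the diagonal-case commutator and the $r=0$ normal-ordering bookkeeping explicitly, which is exactly what the paper's "deduced directly" elides.
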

\begin{proof}
Recall that $\widehat{\mathfrak{gl}}_m(\mathbb{C}_q)_0$ is spanned by the elements $\bf{c}$ and $E_{i,i}t_1^b$ for $1\le i\le m$ and $b\in \Z$.
And, as operator on $\mathcal{F}_N(\bm{Z})$, we have
    $$E_{i,i}t_1^b=\sum_{r\in\bm Z}q^{-br}\left(\sum_{k=1}^n:\psi_i^k(-r)\bar{\psi}_i^k(r):\right)-n\omega_{\bm Z}(b).$$
Then the assertion  can be deduced directly by using Lemma \ref{le:dualpair1le1} and Lemma \ref{le:dualpair1le1bar}.
\end{proof}

For every $\bm{\mu}=(\mu_1,\mu_2,\dots,\mu_n)\in\mathcal{R}(\mathrm{GL}_n)$, we set
  $$v_{\bm{\mu},\bm Z}:=A_{1,\bm Z}^{\mu_1-\mu_2}A_{2,\bm Z}^{\mu_2-\mu_3}\ldots A_{p-1,\bm Z}^{\mu_{p-1}-\mu_p}A_{p,\bm Z}^{\mu_p}\bar{A}_{s,\bm Z}^{-\mu_{s}}\bar{A}_{s+1,\bm Z}^{\mu_{s}-\mu_{s+1}}\cdots \bar{A}_{n,\bm Z}^{\mu_{n-1}-\mu_{n}}|0\rangle\in \mathcal{F}_N(\bm{Z}),$$
  where the positive numbers $p,s$ are  as in  \eqref{eq:muformGLn}. We have:

\begin{prpt}\label{pr:dualpair1}
For every $\bm{\mu}\in\mathcal{R}(\mathrm{GL}_n)$, $v_{\bm{\mu},\bm Z}$ is a
  $(\mathrm{GL}_n,\widehat{\mathfrak{gl}}_m(\mathbb{C}_q))$-highest weight vector in $\mathcal{F}_N(\bm{Z})$
  with highest weight $(\bm\mu,\eta_{\bm\mu,\bm{Z}})$.
\end{prpt}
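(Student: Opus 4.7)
The plan is to verify the four defining conditions of a $(\mathrm{GL}_n,\widehat{\mathfrak{gl}}_m(\C_q))$-highest weight vector for $v_{\bm\mu,\bm Z}$ by successive application of Lemmas \ref{le:dualpair1le1}--\ref{le:dualpair1le4}. Non-vanishing of $v_{\bm\mu,\bm Z}$ is immediate because each factor $A_{\ell,\bm Z}$ and $\bar A_{\ell,\bm Z}$ is a polynomial in mutually commuting creation-type modes, so their product acts injectively on $|0\rangle$. For the $\mathrm{N}^+_{\mathrm{GL}_n}$-invariance it suffices to kill $v_{\bm\mu,\bm Z}$ by each Chevalley generator $E_{p,p+1}$ ($1\le p\le n-1$); under the prescribed action this operator equals $\sum_{k=1}^{m}\sum_{r\in\bm Z}{:}\psi_k^p(-r)\bar\psi_k^{p+1}(r){:}$ (with no scalar correction since $p\ne p+1$). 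A short case split on the sign of $r$ gives $E_{p,p+1}|0\rangle=0$, and Lemma \ref{le:dualpair1le2} provides $[E_{p,p+1},A_{\ell,\bm Z}]=[E_{p,p+1},\bar A_{\ell,\bm Z}]=0$ for each $\ell$; hence $E_{p,p+1}$ commutes past the whole product defining $v_{\bm\mu,\bm Z}$ and annihilates $|0\rangle$.

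The crux of the proof is the annihilation by $\widehat{\mathfrak{gl}}_m(\C_q)_{+,\bm Z}$. Pick $y=E_{i,j}t_0^at_1^b$ with either $a>0$ or $a=0$ and $i<j$; the central correction $-n\delta_{a,0}\delta_{i,j}\omega_{\bm Z}(b)$ in \eqref{eq:glmmodact} vanishes in both cases, so $y$ acts on $\mathcal{F}_N(\bm Z)$ as $\sum_{r\in\bm Z}q^{-br}\sum_{k=1}^{n}{:}\psi_i^k(a-r)\bar\psi_j^k(r){:}$. A direct case check on $r$ (using $\psi_i^k(r'),\bar\psi_j^k(r')|0\rangle=0$ for $r'>0$, plus $\bar\psi_j^k(0)|0\rangle=0$ when $\bm Z=\Z$) shows $y|0\rangle=0$. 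Commute $y$ rightward through the product defining $v_{\bm\mu,\bm Z}$: by Lemma \ref{le:dualpair1le1}, the commutator of $\sum_k\psi_i^k(a-r)\bar\psi_j^k(r)$ with $A_{\ell,\bm Z}$ is (up to a $q$-scalar) the modified determinant $A_{\ell,\bm Z}^{i,j,r}$, and by Lemma \ref{le:dualpair1le1bar} the commutator with $\bar A_{\ell,\bm Z}$ is $\bar A_{\ell,\bm Z}^{i,j,r}$. The second parts of those lemmas guarantee that each such commutator operator commutes with every $A_{\ell',\bm Z}$ ($\ell'\ge\ell$) and $\bar A_{\ell'',\bm Z}$ ($\ell''>\ell$) and kills $|0\rangle$. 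Because the partition \eqref{eq:muformGLn} forces $p<s$, every $A$-factor in $v_{\bm\mu,\bm Z}$ has index $\le p<s\le$ the index of every $\bar A$-factor, so any such commutator operator can be propagated rightward through all remaining factors down to $|0\rangle$, where it vanishes. Combined with $y|0\rangle=0$, this yields $y.v_{\bm\mu,\bm Z}=0$.

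For the weight computations, the $\mathrm{H}_{\mathrm{GL}_n}$-weight of $v_{\bm\mu,\bm Z}$ is additive in the factors because the diagonal torus acts on the Weyl algebra by derivations annihilating $|0\rangle$; combining Lemma \ref{le:dualpair1le3} with the coordinate-wise telescoping identity produces exactly $\bm\mu$. For the $\widehat{\mathfrak{gl}}_m(\C_q)_0$-weight, each $E_{i,i}t_1^b$ decomposes as a quadratic normal-ordered derivation $D_{i,b}$ minus the scalar $n\omega_{\bm Z}(b)$; iterating Lemma \ref{le:dualpair1le4} factor by factor, together with the observation that $|0\rangle$ itself is a weight vector with weight $\eta_{\bm0,\bm Z}$ (the vacuum case of the same formula) and that the total weight of a product of commuting weight vectors on $|0\rangle$ is the sum of their individual weights minus the vacuum weight once per extra factor, produces a telescoping sum of $q$-power sums over the sets $J^{\bm Z}_{i,\ell}$, $\bar J^{\bm Z}_{i,\ell}$ that collapses to $\eta_{\bm\mu,\bm Z}(E_{i,i}t_1^b)$; the central element $\bm c$ acts throughout as the scalar $-n=\eta_{\bm\mu,\bm Z}(\bm c)$.

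The main obstacle is the commutation bookkeeping in the annihilation step: one must verify that each commutator produced at an $A$- or $\bar A$-factor is inert with respect to all subsequent factors and thus can be cleanly propagated to $|0\rangle$. The separation condition $p<s$ is what makes this possible; if an $\bar A$-factor could stand to the left of an $A$-factor, a commutator of $\bar A_\ell^{i,j,r}$ type would need to traverse an $A_{\ell'}$ with $\ell'\le\ell$, and Lemma \ref{le:dualpair1le1bar} does not guarantee this. A secondary technical nuisance is the $\widehat{\mathfrak{gl}}_m(\C_q)_0$-weight, where one has to account carefully for the central shifts and the normal-ordering corrections so that the iterated telescoping matches the precise functional $\eta_{\bm\mu,\bm Z}$.
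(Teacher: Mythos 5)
Your proposal is correct and follows essentially the same route as the paper's proof: annihilation by $\widehat{\mathfrak{gl}}_m(\mathbb{C}_q)_{+,\bm Z}$ via the commutator propagation of Lemmas \ref{le:dualpair1le1} and \ref{le:dualpair1le1bar}, $\mathrm{N}^+_{\mathrm{GL}_n}$-invariance via Lemma \ref{le:dualpair1le2}, and the two weight computations via Lemmas \ref{le:dualpair1le3} and \ref{le:dualpair1le4}. You merely spell out details the paper leaves implicit (vacuum annihilation, the role of $p<s$ in the ordering of the $A$- and $\bar A$-factors, and the additive weight bookkeeping), all of which are consistent with the paper's argument.
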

\begin{proof}
Recall that the action of $\widehat{\mathfrak{gl}}_m(\mathbb{C}_q)_{+,\bm{Z}}$  on $\mathcal{F}_N(\bm{Z})$ is as follows:
\[
E_{i,j}t_0^at_1^b=\sum_{r\in\bm Z}q^{-br}\left(\sum_{k=1}^n:\psi_i^k(a-r)\bar{\psi}_i^k(r):\right),
\]
where $1\le i,j\le m$ and $a,b\in \Z$ such that either $a>0$ or $a=0$ and $i<j$.
Then one can conclude from  Lemma \ref{le:dualpair1le1} and Lemma \ref{le:dualpair1le1bar} that
\begin{equation*} \widehat{\mathfrak{gl}}_m(\mathbb{C}_q)_{+,\bm{Z}}.v_{\bm{\mu},\bm{Z}}=0.
\end{equation*}
Similarly, Lemma \ref{le:dualpair1le2} implies that
\begin{equation*}
\mathrm{N}^+_{\mathrm{GL}_n}.v_{\bm{\mu},\bm{Z}}=v_{\bm{\mu},\bm{Z}},
\end{equation*}
while Lemma \ref{le:dualpair1le3} and Lemma \ref{le:dualpair1le4} imply that
\[
x.v_{\bm{\mu},\bm{Z}}=\eta_{\bm{\mu},\bm{Z}}(x) v_{\bm{\mu},\bm{Z}}\quad\text{and}\quad h.v_{\bm{\mu},\bm{Z}}
=h^{\bm\mu}v_{\bm{\mu},\bm{Z}}
\]
for $x\in \widehat{\mathfrak{gl}}_m(\mathbb{C}_q)_0$ and $h\in \mathrm{H}_{\mathrm{GL}_n}$.
This completes the proof.
\end{proof}

For every $\bm{\mu}=(\mu_1,\mu_2,\dots,\mu_{2n})\in \mathcal{R}(\mathrm{Sp}_{2n})$, set
$$v_{\bm{\mu},\bm Z}:=A_{1,\bm Z}^{\mu_1-\mu_2}A_{2,\bm Z}^{\mu_2-\mu_3}\cdots A_{n,\bm Z}^{\mu_{n}-\mu_{n+1}}|0\rangle\in \mathcal{F}_N(\bm{Z}).$$
Similar to Proposition \ref{pr:dualpair1}, we have

\begin{prpt}\label{pr:dualpair2}
When $\bm Z=\mathbb{Z}+\frac{1}{2}$, for every  $\bm\mu\in \mathcal{R}(\mathrm{Sp}_{2n})$,
$v_{\bm{\mu},\bm Z}$ is a
  $(\mathrm{Sp}_{2n},\widehat{\mathfrak{so}}_m(\mathbb{C}_q))$-highest weight vector in $\mathcal{F}_N(\bm{Z})$
  with highest weight $(\bm\mu,\eta_{\bm\mu,\bm{Z}})$.
\end{prpt}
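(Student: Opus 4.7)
The plan is to mimic the proof of Proposition \ref{pr:dualpair1}, verifying four conditions on $v_{\bm\mu,\bm{Z}}$: annihilation by $\widehat{\mathfrak{so}}_m(\mathbb{C}_q)_{+,\bm{Z}}$, fixation by $\mathrm{N}^+_{\mathrm{Sp}_{2n}}$, the weight identity $h.v_{\bm\mu,\bm{Z}}=h^{\bm\mu}v_{\bm\mu,\bm{Z}}$ for $h\in\mathrm{H}_{\mathrm{Sp}_{2n}}$, and the weight identity $x.v_{\bm\mu,\bm{Z}}=\eta_{\bm\mu,\bm{Z}}(x)v_{\bm\mu,\bm{Z}}$ for $x\in\widehat{\mathfrak{so}}_m(\mathbb{C}_q)_0$. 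For each, the strategy is to establish analogs of Lemmas \ref{le:dualpair1le1}--\ref{le:dualpair1le4} in the present setting.

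The key observation on the Lie algebra side is that by \eqref{eq:sommodact}, each $e_{i,j}(a,b)$ acts on $\mathcal{F}_N(\bm{Z})$ as a difference of two ``$\sum_k :\psi\bar\psi:$''-type operators (plus a scalar when $a=0$, $i=j$). After the reindexing $r\mapsto a-r$ in the second piece, both summands have exactly the shape to which Lemma \ref{le:dualpair1le1} applies, with index pairs $(i,j)$ and $(m+1-j,m+1-i)$ respectively. Thus $[e_{i,j}(a,b),A_{\ell,\bm{Z}}]$ is a difference of two column-replaced determinants. A case by case inspection for each positive root type prescribed by $\Pi(\mathfrak{g},\mathbb{Z}+\frac{1}{2})$ in \eqref{eq:Pi2}---and for every pair $(\alpha,a)$ with $a>0$---shows that \emph{both} replacement columns still land in the creation range, so each resulting determinant annihilates $|0\rangle$; the $b$-dependence is separated by the standard Vandermonde argument used in the proof of Lemma \ref{le:howedual}.

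On the group side, under the embedding \eqref{eq:dualpair3} the simple positive root vectors of $\mathfrak{sp}_{2n}$ act either as ``$\mathfrak{gl}_n$-type'' operators $\sum_{k=1}^m\sum_r :\psi_k^p(-r)\bar\psi_k^{p+1}(r):$ (annihilating $v_{\bm\mu,\bm{Z}}$ by Lemma \ref{le:dualpair1le2} verbatim) or as the long-root element $g_{n,n}$, which acts as a pure creation operator $\sum_{k=1}^m\sum_{r\in\mathbb{Z}+\frac{1}{2}}\psi_k^n(-r)\psi_{m+1-k}^n(r)$. Such operators commute with every $A_{\ell,\bm{Z}}$ by \eqref{eq:commu} and annihilate $|0\rangle$: crucially for $\bm{Z}=\mathbb{Z}+\frac{1}{2}$ there is no zero mode, so in every summand one of the two $\psi_k^n(\pm r)$ factors has positive mode and commutes past to kill the vacuum. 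The weight identity under $\mathrm{H}_{\mathrm{Sp}_{2n}}$ then follows directly from the isomorphism \eqref{eq:sp2nmodiso}: each $A_{\ell,\bm{Z}}$ carries weight $h_1 h_2 \cdots h_\ell$, and the telescoping product yields $h^{\bm\mu}$.

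The main obstacle is the identification $e_{i,i}(0,b).v_{\bm\mu,\bm{Z}}=\eta_{\bm\mu,\bm{Z}}(e_{i,i}(0,b))\,v_{\bm\mu,\bm{Z}}$. Applying the column-replacement rule of Lemma \ref{le:dualpair1le1} to both pieces of $e_{i,i}(0,b)$ and iterating over the factors $A_{\ell',\bm{Z}}^{\mu_{\ell'}-\mu_{\ell'+1}}$, one must carefully track which columns of each determinant are hit by the indices $\alpha(r,0,i)$ and $\alpha(r,0,m+1-i)$. Matching these landing positions against $J_{i,k}^{\bm{Z}}$ and $J_{m+1-i,k}^{\bm{Z}}$ produces precisely the displayed formula for $\eta_{\bm\mu,\bm{Z}}(e_{i,i}(0,b))$. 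The hypothesis $\bm{Z}=\mathbb{Z}+\frac{1}{2}$ is essential: it aligns these index sets with the half-integer modes in the determinants and is what lets the pure-creation operators kill the vacuum in the preceding step.
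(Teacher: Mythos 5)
Your proposal is correct and follows essentially the same route as the paper's proof: Lemma \ref{le:dualpair1le1} applied to both summands of \eqref{eq:sommodact} handles the annihilation by $\widehat{\mathfrak{so}}_m(\mathbb{C}_q)_{+,\bm Z}$ and the $\widehat{\mathfrak{so}}_m(\mathbb{C}_q)_0$-eigenvalue, Lemma \ref{le:dualpair1le2} plus the observation that the $g_{p,s}$ involve only $\psi$'s (so they commute with every $A_{\ell,\bm Z}$ and kill $|0\rangle$ because there are no zero modes when $\bm Z=\mathbb{Z}+\frac{1}{2}$) gives $\mathrm{N}^+_{\mathrm{Sp}_{2n}}.v_{\bm\mu,\bm Z}=v_{\bm\mu,\bm Z}$, and the weight count of Lemma \ref{le:dualpair1le3} gives the $\mathrm{H}_{\mathrm{Sp}_{2n}}$-weight. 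The only difference is organizational: the paper shortcuts the toroidal side by citing Proposition \ref{pr:dualpair1} through the inclusions $\widehat{\mathfrak{so}}_m(\mathbb{C}_q)_{+,\bm Z}\subset\widehat{\mathfrak{gl}}_m(\mathbb{C}_q)_{+,\bm Z}$ and $\widehat{\mathfrak{so}}_m(\mathbb{C}_q)_{0}\subset\widehat{\mathfrak{gl}}_m(\mathbb{C}_q)_{0}$ rather than redoing the column-replacement computation, and your appeal to a Vandermonde separation of the $b$-dependence is unnecessary since each mode's contribution already vanishes termwise.
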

\begin{proof}
Firstly, recall that
\[\widehat{\mathfrak{so}}_m(\mathbb{C}_q)_{+,\bm{Z}}=\widehat{\mathfrak{so}}_m(\mathbb{C}_q)\cap \widehat{\mathfrak{gl}}_m(\mathbb{C}_q)_{+,\bm{Z}},\quad
\widehat{\mathfrak{so}}_m(\mathbb{C}_q)_{0}=\widehat{\mathfrak{so}}_m(\mathbb{C}_q)\cap \widehat{\mathfrak{gl}}_m(\mathbb{C}_q)_{0}.\]
Then it follows from Proposition \ref{pr:dualpair1} that
\[\widehat{\mathfrak{so}}_m(\mathbb{C}_q)_{+,\bm{Z}}.v_{\bm{\mu},\bm Z}=0,\qquad x.v_{\bm{\mu},\bm Z}=\eta_{\bm\mu,\bm{Z}}(x)v_{\bm{\mu},\bm Z}\]
for $x\in \widehat{\mathfrak{so}}_m(\mathbb{C}_q)_{0}$.
Secondly, Lemma \ref{le:dualpair1le2} implies that the elements
\[f_{p,s}=\sum_{k=1}^m\sum_{r\in\frac{1}{2}+\mathbb{Z}}:\psi_k^p(-r)\bar{\psi}_k^s(r):,\quad 1\leq p< s\leq n\]
act trivially on $v_{\bm{\mu},\bm Z}$, and it is routine to check that the elements
\[g_{p,s}=\sum_{k=1}^m\sum_{r\in\frac{1}{2}+\mathbb{Z}}:\psi_k^p(-r)\psi_{m+1-k}^s(r):,\quad 1\leq p\leq s\leq n\]
act trivially on $v_{\bm{\mu},\bm Z}$ as well. This means  that
\[\mathrm{N}_{\mathrm{Sp}_{2n}}^+.v_{\bm{\mu},\bm Z}=v_{\bm{\mu},\bm Z}.\]
 Finally, by using Lemma \ref{le:dualpair1le3}, we see that
 \[
 h.v_{\bm{\mu},\bm Z}=h^{\bm{\mu}} v_{\bm{\mu},\bm Z}
 \]
 for $h\in \mathrm{H}_{\mathrm{Sp}_{2n}}$. This completes the proof.
\end{proof}

\subsection{$(\mathrm{SO}_n,\widehat{\mathfrak{sp}}_{2m}(\mathbb{C}_q))$-highest weight vectors in $\mathcal{F}_N(\bm{Z})$}
In this subsection, we determine the $(\mathrm{SO}_n,\widehat{\mathfrak{sp}}_{2m}(\mathbb{C}_q))$-highest weight vectors in $\mathcal{F}_N(\bm{Z})$
and complete the proof of Theorem \ref{thm:main}.

For every $\ell=1,2,\dots,n$, we define an element $B_{\ell,\bm Z}$ in the Weyl algebra $\mathcal{W}_N(\bm Z)$ by
\begin{equation}
B_{\ell,\bm Z}=\textrm{det}
\begin{pmatrix}
 \phi_{-\frac{1}{2}-\epsilon}^1 & \phi_{-\frac{3}{2}-\epsilon}^1 & \cdots & \phi_{-\bar{\ell}_{1}+\frac{1}{2}-\epsilon}^1&\bar{\phi}_{-\frac{1}{2}-\epsilon}^n & \bar{\phi}_{-\frac{3}{2}-\epsilon}^n & \cdots & \bar{\phi}_{-\bar{\ell}_{2}+\frac{1}{2}-\epsilon}^n \\
  \phi_{-\frac{1}{2}-\epsilon}^2 & \phi_{-\frac{3}{2}-\epsilon}^2 & \cdots & \phi_{-\bar{\ell}_{1}+\frac{1}{2}-\epsilon}^2&\bar{\phi}_{-\frac{1}{2}-\epsilon}^{n-1} & \bar{\phi}_{-\frac{3}{2}-\epsilon}^{n-1} & \cdots & \bar{\phi}_{-\bar{\ell}_{2}+\frac{1}{2}-\epsilon}^{n-1} \\
  \cdots&\cdots&\ddots&\cdots&\cdots&\cdots&\ddots&\cdots\\
  \phi_{-\frac{1}{2}-\epsilon}^\ell & \phi_{-\frac{3}{2}-\epsilon}^\ell & \cdots & \phi_{-\bar{\ell}_{1}+\frac{1}{2}-\epsilon}^\ell&\bar{\phi}_{-\frac{1}{2}-\epsilon}^{n-\ell+1} & \bar{\phi}_{-\frac{3}{2}-\epsilon}^{n-\ell+1} & \cdots & \bar{\phi}_{-\bar{\ell}_{2}+\frac{1}{2}-\epsilon}^{n-\ell+1} \\
\end{pmatrix},
\end{equation}
 recalling that
\begin{equation*}
\bar{\ell}_1:=
\begin{cases}
  \frac{\ell_1}{2}m+\ell_2 & \mbox{if } \ell_1 \textrm{ is even} \\
  \frac{\ell_1+1}{2}m & \mbox{if } \ell_1 \textrm{ is odd}
\end{cases},\qquad
\bar{\ell}_2:=
\begin{cases}
  \frac{\ell_1}{2}m & \mbox{if } \ell_1 \textrm{ is even}\\
  \frac{\ell_1-1}{2}m+\ell_2 & \mbox{if } \ell_1 \textrm{ is odd}
\end{cases},
\end{equation*}
and $\ell_1,\ell_2$ are the (unique) nonnegative integers such that
\[\ell=\ell_1m+\ell_2\quad\text{and}\quad 1\leq \ell_2\leq m.\]

When $n$ is even, we also define  $\bar{B}_{\frac{n}{2},\bm Z}$  to be the determinant obtained by replacing the $\frac{n}{2}$-th row in $B_{\frac{n}{2},\bm Z}$ by
   $$(\phi_{-\frac{1}{2}-\epsilon}^{\frac{n}{2}+1} , \phi_{-\frac{3}{2}-\epsilon}^{\frac{n}{2}+1} , \ldots , \phi_{-\bar{\ell}_1+\frac{1}{2}-\epsilon}^{ \frac{n}{2}+1},\bar{\phi}_{-\frac{1}{2}-\epsilon}^{\frac{n}{2}} ,\bar{\phi}_{-\frac{3}{2}-\epsilon}^{\frac{n}{2}} , \ldots  , \bar{\phi}_{-\bar{\ell}_2+\frac{1}{2}-\epsilon}^{\frac{n}{2}}).$$

\begin{lemt}\label{le:dualpair2le1}
Let $1\leq i,j\leq m$, $a\in\mathbb{Z}$ and  $1\leq \ell\leq \lfloor\frac{n}{2}\rfloor$. Assume that either $a>0$ or $a=0$ and $i<j$.
Then we have
\begin{equation}\label{eq:dualpair2eq1}
[\sum_{k=1}^{n}\psi_i^k(a-r)\bar{\psi}_j^k(r),B_{\ell,\bm Z}]=
\begin{cases}
  B_{\ell,\bm Z}^{r,i,j}+\bar{B}_{\ell,\bm Z}^{r,i,j} & \mbox{if } \alpha(r,a,i)\in I_{\bar{\ell}_2},~ \beta(r,0,j)\in I_{\bar{\ell}_1}  \\
  \bar{B}_{\ell,\bm Z}^{r,i,j}& \mbox{if }\alpha(r,a,i)\in I_{\bar{\ell}_2},~\beta(r,0,j)\notin I_{\bar{\ell}_1}\\
  B_{\ell,\bm Z}^{r,i,j}& \mbox{if }\alpha(r,a,i)\notin I_{\bar{\ell}_2},~\beta(r,0,j)\in I_{\bar{\ell}_1}\\
  0 & \mbox{otherwise}
\end{cases}.
\end{equation}
Here, when $\beta(r,0,j)\in I_{\bar{\ell}_1}$, $B_{\ell,\bm Z}^{r,i,j}$ is the determinant obtained by replacing the $\beta(r,0,j)$-column in $B_{\ell,\bm Z}$ by the column vector
  $$(\phi_{\alpha(r,a,i)-\frac{1}{2}-\epsilon}^1,\phi_{\alpha(r,a,i)-\frac{1}{2}-\epsilon}^2,\ldots,
  \phi_{\alpha(r,a,i)-\frac{1}{2}-\epsilon}^\ell)^{t},$$
  and, when $\alpha(r,a,i)\in I_{\bar{\ell}_2}$,
  $\bar{B}_{\ell,\bm Z}^{r,i,j}$ is the determinant obtained by replacing the $(\bar{\ell}_1+\alpha(r,a,i))$-column in $B_{\ell,\bm Z}$ by the column vector
  $$(-\bar{\phi}_{\beta(r,0,j)-\frac{1}{2}-\epsilon}^1,-\bar{\phi}_{\beta(r,0,j)-\frac{1}{2}-\epsilon}^2,
  \ldots,-\bar{\phi}
  _{\beta(r,0,j)-\frac{1}{2}-\epsilon}^\ell)^{t}.$$

Furthermore, for the case that $\beta(r,0,j)\in I_{\bar{\ell}_1}$ we have
 \begin{eqnarray}\label{eq:dualpair2eq3}
[\bar{B}_{\frac{n}{2},\bm Z}, B_{\ell^{\prime\prime},\bm Z}^{r,i,j}]=0\ (\text{with $n$ even}),\quad [B_{\ell',\bm Z}, B_{\ell,\bm Z}^{r,i,j}]=0\quad\text{and}\quad
 B_{\ell,\bm Z}^{r,i,j}|0\rangle=0,
\end{eqnarray}
and for the case that $\alpha(r,a,i)\in I_{\bar{\ell}_2}$ we have
 \begin{eqnarray}\label{eq:dualpair2eq2}
[\bar{B}_{\frac{n}{2},\bm Z}, \bar{B}_{\ell^{\prime\prime},\bm Z}^{r,i,j}]=0\ (\text{with $n$ even}),\quad
[B_{\ell',\bm Z}, \bar{B}_{\ell,\bm Z}^{r,i,j}]=0\quad\text{and}\quad
 \bar{B}_{\ell,\bm Z}^{r,i,j}|0\rangle=0,
\end{eqnarray}
where $\ell',\ell'',=1,2,\dots,n$ with $\ell'+\ell\leq n$ and $\ell^{\prime\prime}<\frac{n}{2}$.
\end{lemt}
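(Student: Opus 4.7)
My plan is to compute each commutator by the canonical relations \eqref{eq:commu} and a column-wise Leibniz expansion of $B_{\ell,\bm Z}$. Using the identifications $\psi_i^k(a-r)=\phi^k_{\alpha(r,a,i)-\frac{1}{2}-\epsilon}$ and $\bar\psi_j^k(r)=\bar\phi^k_{\beta(r,0,j)-\frac{1}{2}-\epsilon}$, each summand $\psi_i^k(a-r)\bar\psi_j^k(r)$ is a product of one $\phi$-type and one $\bar\phi$-type operator. Since $\phi$'s commute among themselves and $\bar\phi$'s commute among themselves, the Leibniz identity $[\phi\bar\phi,B_{\ell,\bm Z}]=\phi[\bar\phi,B_{\ell,\bm Z}]+[\phi,B_{\ell,\bm Z}]\bar\phi$ splits the calculation into two independent contractions: (i) $\bar\psi_j^k(r)$ against the first $\bar\ell_1$ ($\phi$-type) columns of $B_{\ell,\bm Z}$, and (ii) $\psi_i^k(a-r)$ against the last $\bar\ell_2$ ($\bar\phi$-type) columns.

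Contraction (i) is nonzero iff $\beta(r,0,j)\in I_{\bar\ell_1}$; after summing $k$ from $1$ to $n$, only $k\in\{1,\dots,\ell\}$ survive, and the effect is to replace column $\beta(r,0,j)$ by $(\phi^1_{\alpha-\frac{1}{2}-\epsilon},\dots,\phi^\ell_{\alpha-\frac{1}{2}-\epsilon})^t$, producing $B_{\ell,\bm Z}^{r,i,j}$. Contraction (ii) is nonzero iff $\alpha(r,a,i)\in I_{\bar\ell_2}$; the minus sign in the column defining $\bar B_{\ell,\bm Z}^{r,i,j}$ is accounted for by $[\phi,\bar\phi]=-[\bar\phi,\phi]$. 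The four subcases of \eqref{eq:dualpair2eq1} are then the four disjoint combinations of the conditions $\alpha\in I_{\bar\ell_2}$ and $\beta\in I_{\bar\ell_1}$.

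The commutativity claims in \eqref{eq:dualpair2eq3} and \eqref{eq:dualpair2eq2} follow by the same mechanism: any nonzero commutator of $B_{\ell,\bm Z}^{r,i,j}$ (resp.\ $\bar B_{\ell,\bm Z}^{r,i,j}$) with $B_{\ell',\bm Z}$ or $\bar B_{\frac{n}{2},\bm Z}$ must come from a contraction between the new column and the $\bar\phi$-block (resp.\ $\phi$-block) of the other determinant. Using the arithmetic identity $\alpha(r,a,i)+\beta(r,0,j)=am+j-i+1$ together with the hypothesis $a>0$ or ($a=0$, $i<j$), one gets $\alpha+\beta\ge 2$; combined with the range constraints $\ell'+\ell\le n$ (or $\ell''<\frac{n}{2}$) and the explicit form of $\bar\ell_1,\bar\ell_2$, this forces the new subscript to lie outside the contraction range of the relevant block, so all such contractions vanish.

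For the vacuum annihilations $B_{\ell,\bm Z}^{r,i,j}|0\rangle=0$ and $\bar B_{\ell,\bm Z}^{r,i,j}|0\rangle=0$, the argument mirrors that of Lemma \ref{le:dualpair1le1}: from $\beta\in I_{\bar\ell_1}$ and $\alpha+\beta\ge 2$ one derives $\alpha-\frac{1}{2}-\epsilon\ge -\bar\ell_1+\frac{3}{2}-\epsilon$, which forces the replacement subscript either to duplicate one of the surviving $\phi$-columns of $B_{\ell,\bm Z}$ (so the determinant vanishes identically by antisymmetry), or to lie in the annihilation range for the $\phi$-mode; in the latter case the new column can be pushed to the right through the remaining columns and dies on $|0\rangle$. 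The main obstacle in the plan is the sub-case $\alpha\in I_{\bar\ell_2}$ of this last step, where commuting the new $\phi$'s through the $\bar\phi$-block produces nontrivial scalar contractions; tracking these via a Wick-type expansion and exploiting the antisymmetry of the determinant shows that the surviving contributions cancel in pairs. The remainder is routine index bookkeeping built entirely on \eqref{eq:commu}.
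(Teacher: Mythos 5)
Your skeleton matches the paper's: the Leibniz/cofactor computation behind \eqref{eq:dualpair2eq1} (which the paper dismisses as ``straightforward''), and the inequality $\alpha(r,a,i)+\beta(r,0,j)=am+j-i+1\ge 2$, giving $\alpha(r,a,i)-\tfrac12-\epsilon\ge-\bar{\ell}_1+\tfrac32-\epsilon$ when $\beta(r,0,j)\in I_{\bar{\ell}_1}$ (and symmetrically for $\bar{B}^{r,i,j}_{\ell,\bm Z}$), are exactly what the paper uses. However, two of your steps rest on the wrong mechanism. For the bracket relations \eqref{eq:dualpair2eq3} and \eqref{eq:dualpair2eq2} you claim that $\alpha+\beta\ge 2$ together with $\ell'+\ell\le n$ ``forces the new subscript to lie outside the contraction range'' of the other determinant. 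That is false as a subscript statement: take $m=3$, $n=5$, $\ell=1$, $\ell'=4$, $a=1$, $i=3$, $j=1$, $r=\tfrac12-\epsilon$; then $\beta(r,0,j)=1\in I_{\bar{\ell}_1}$, $\alpha(r,a,i)=1$, and the new column entry $\phi^{1}_{\frac12-\epsilon}$ has exactly the complementary subscript to the entry $\bar{\phi}^{k}_{-\frac12-\epsilon}$ occurring in the $\bar{\phi}$-block of $B_{4,\bm Z}$. What kills every contraction — and this is all the paper means by ``conclude from \eqref{eq:commu}'' — is disjointness of the \emph{superscripts}: the $\phi$'s in $B^{r,i,j}_{\ell,\bm Z}$ carry superscripts $1,\dots,\ell$ while the $\bar{\phi}$'s in $B_{\ell',\bm Z}$ (resp.\ $\bar{B}_{\frac n2,\bm Z}$) carry superscripts $n,\dots,n-\ell'+1$ (resp.\ $\{\tfrac n2\}\cup\{\tfrac n2+2,\dots,n\}$), and symmetrically; the constraints $\ell'+\ell\le n$ and $\ell''<\tfrac n2$ keep these ranges disjoint, so by \eqref{eq:commu} every pair of generators commutes. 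The hypothesis on $a,i,j$ plays no role in this part.

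The same observation dissolves your self-declared ``main obstacle'' in the vacuum step. In any monomial of the determinant expansion of $B^{r,i,j}_{\ell,\bm Z}$ the factors come from distinct rows, so the $\phi$-factors have superscripts $\le\ell$ and the $\bar{\phi}$-factors have superscripts $\ge n-\ell+1>\ell$ (this is where $\ell\le\lfloor\tfrac n2\rfloor$ enters); hence all factors in a single monomial commute pairwise and \emph{no} Wick contractions arise when an annihilating $\phi$-mode is moved to the right past the $\bar{\phi}$-block. Your assertion that ``nontrivial scalar contractions \dots cancel in pairs'' is therefore unnecessary, and as written it is the one genuine gap: it is an unproved cancellation claim standing in for the crux of your argument. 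Once superscript disjointness is in hand, your dichotomy works exactly as in the paper: the new subscript $\alpha(r,a,i)-\tfrac12-\epsilon\ge-\bar{\ell}_1+\tfrac32-\epsilon$ either coincides with a surviving $\phi$-column (determinant vanishes by antisymmetry; note it cannot coincide with the \emph{removed} column, since that would force $\alpha+\beta=1$, contradicting $\alpha+\beta\ge2$) or is an annihilating mode, giving $B^{r,i,j}_{\ell,\bm Z}|0\rangle=0$, and likewise $\bar{B}^{r,i,j}_{\ell,\bm Z}|0\rangle=0$.
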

\begin{proof}
The verification of  \eqref{eq:dualpair2eq1} is straightforward.
  For the equations \eqref{eq:dualpair2eq3}
 and \eqref{eq:dualpair2eq2},
we can conclude from \eqref{eq:commu} that $[\bar{B}_{\frac{n}{2},\bm Z}, B_{\ell^{\prime\prime},\bm Z}^{r,i,j}]=[\bar{B}_{\frac{n}{2},\bm Z}, \bar{B}_{\ell^{\prime\prime},\bm Z}^{r,i,j}]=0$ if $n$ is even and $\ell''<\frac{n}{2}$, and
that $[B_{\ell',\bm Z}, \bar{B}_{\ell,\bm Z}^{r,i,j}]=0=[B_{\ell',\bm Z}, B_{\ell,\bm Z}^{r,i,j}]=0$.
Furthermore, if $\beta(r,0,j)\in I_{\bar{\ell}_1}$, then
  $$\alpha(r,a,i)-\frac{1}{2}-\epsilon+\beta(r,0,j)-\frac{1}{2}-\epsilon=am+j-i-2\epsilon\geq 1-2\epsilon.$$
Using this, we get that
  $$\alpha(r,a,i)-\frac{1}{2}-\epsilon \geq 1-2\epsilon-(\bar{\ell}_1-\epsilon-\frac{1}{2})\geq -\bar{\ell}_1+\frac{3}{2}-\epsilon, $$
  which implies $B_{\ell,\bm Z}^{r,i,j}|0\rangle=0$.
Similarly, if $\alpha(r,a,i)\in I_{\bar{\ell}_2}$,
  then
  $$\beta(r,0,j)-\frac{1}{2}-\epsilon \geq 1-2\epsilon-(\bar{\ell}_2-\epsilon+\frac{1}{2})\geq -\bar{\ell}_2+\frac{3}{2}-\epsilon, $$
  which implies $\bar{B}_{\ell,\bm Z}^{r,i,j}|0\rangle=0$.
\end{proof}
\begin{lemt}\label{le:dualpair2le2}
Let $1\leq i,j\leq m$, $a\in\mathbb{Z}$, $r\in\bm Z$ and $\lfloor\frac{n}{2}\rfloor<\ell\leq n$. Assume that either $a>0$ or $a=0$ and $i<j$. Then we have
 \begin{eqnarray}\label{eq:dualpair2eq4}
 \sum_{k=1}^{n}\psi_i^k(a-r)\bar{\psi}_j^k(r)B_{\ell,\bm Z}|0\rangle=0.
 \end{eqnarray}
\end{lemt}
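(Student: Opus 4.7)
The plan is to decompose
\[\Omega\,B_{\ell,\bm Z}|0\rangle=[\Omega,B_{\ell,\bm Z}]|0\rangle+B_{\ell,\bm Z}\,\Omega|0\rangle,\qquad \Omega:=\sum_{k=1}^{n}\psi_i^k(a-r)\bar{\psi}_j^k(r),\]
and show each summand vanishes. The second summand is easy: under the hypothesis ($a>0$, or $a=0$ and $i<j$), I claim $\psi_i^k(a-r)\bar{\psi}_j^k(r)|0\rangle=0$ for every $k$ and $r\in\bm Z$. Indeed, either $\bar{\psi}_j^k(r)|0\rangle=0$ directly, or I commute past using $[\psi_i^k(a-r),\bar{\psi}_j^k(r)]=-\delta_{i,j}\delta_{a,0}=0$ and then apply $\psi_i^k(a-r)|0\rangle=0$, which holds because $a-r>0$.

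For the first summand, the commutator formula~\eqref{eq:dualpair2eq1} proved in Lemma~\ref{le:dualpair2le1} depends only on the Weyl relations and is therefore valid for all $1\le\ell\le n$. When at most one of $\beta(r,0,j)\in I_{\bar{\ell}_1}$, $\alpha(r,a,i)\in I_{\bar{\ell}_2}$ holds, the arguments from Lemma~\ref{le:dualpair2le1} apply verbatim: the unique contributing replacement column either duplicates an existing column (so the determinant vanishes) or has entries that commute with every other entry of the matrix and annihilate $|0\rangle$ once moved to the right, with the absence of the other indicator precisely ensuring the disappearance of the entry with which the replacement would otherwise fail to commute.

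The main obstacle is the remaining case $\beta\in I_{\bar{\ell}_1}$ and $\alpha\in I_{\bar{\ell}_2}$, in which the individual terms $B_{\ell,\bm Z}^{r,i,j}|0\rangle$ and $\bar{B}_{\ell,\bm Z}^{r,i,j}|0\rangle$ may be nonzero but their sum must vanish. The approach is to expand each determinant as $\sum_{\sigma\in S_\ell}\mathrm{sgn}(\sigma)\prod_\rho M_{\rho,\sigma(\rho)}$, set $\rho_\beta=\sigma^{-1}(\beta)$ and $\rho_\alpha=\sigma^{-1}(\bar{\ell}_1+\alpha)$, and extend the movement argument: the replacement entry in $B_{\ell,\bm Z}^{r,i,j}$ fails to commute with some entry appearing in $\sigma$'s product only when $\rho_\beta+\rho_\alpha=n+1$, and similarly for $\bar{B}_{\ell,\bm Z}^{r,i,j}$. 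For such $\sigma$, using $[\phi^{\rho_\beta}_{\alpha-\frac{1}{2}-\epsilon},\bar{\phi}^{\rho_\beta}_{-\alpha+\frac{1}{2}-\epsilon}]=-1$, the single non-commuting pair collapses and contributes $-\mathrm{sgn}(\sigma)\prod_{\rho\neq\rho_\beta,\rho_\alpha}M_{\rho,\sigma(\rho)}|0\rangle$ to $B_{\ell,\bm Z}^{r,i,j}|0\rangle$ if $\rho_\beta<\rho_\alpha$ and to $\bar{B}_{\ell,\bm Z}^{r,i,j}|0\rangle$ if $\rho_\beta>\rho_\alpha$ (never both). The involution $\sigma\mapsto(\beta,\bar{\ell}_1+\alpha)\circ\sigma$ then swaps the two subcases, negates $\mathrm{sgn}(\sigma)$, and preserves the reduced product, so all surviving contributions cancel in pairs. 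I expect the careful sign bookkeeping and the precise tracking of the movement argument in this paired case to be the most delicate part.
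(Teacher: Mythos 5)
Your proof is correct, but it is organized differently from the paper's. The paper never extends the commutator formula \eqref{eq:dualpair2eq1} beyond $\ell\leq\lfloor\frac{n}{2}\rfloor$: after disposing of the easy cases exactly as you do, it treats the remaining case $\alpha(r,a,i)\in I_{\bar{\ell}_2}$, $\beta(r,0,j)\in I_{\bar{\ell}_1}$ by computing $\sum_{k}\psi_i^k(a-r)\bar{\psi}_j^k(r)B_{\ell,\bm Z}|0\rangle$ directly through two successive cofactor expansions of $B_{\ell,\bm Z}$ (along the column $\beta(r,0,j)$, then along the column $\bar{\ell}_1+\alpha(r,a,i)$), which reduces everything to double minors $B_{k,\beta(r,0,j)}^{\,n-k+1,\,\alpha(r,a,i)+\bar{\ell}_1}$ appearing in two ranges of $k$ with opposite signs; these cancel under $k\leftrightarrow n-k+1$. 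Your route --- split off $[\Omega,B_{\ell,\bm Z}]$ since $\Omega|0\rangle=0$, note that the Weyl-relation computation behind \eqref{eq:dualpair2eq1} is insensitive to $\ell$, and then kill $\bigl(B_{\ell,\bm Z}^{r,i,j}+\bar{B}_{\ell,\bm Z}^{r,i,j}\bigr)|0\rangle$ by the sign-reversing involution $\sigma\mapsto(\beta,\bar{\ell}_1+\alpha)\circ\sigma$ --- implements the same cancellation one level lower, at the level of permutation terms instead of double minors, and your detailed claims all check out: the unique non-commuting pair occurs exactly when $\rho_\beta+\rho_\alpha=n+1$, the collapse contributes $-\mathrm{sgn}(\sigma)$ times the reduced product, $B_{\ell,\bm Z}^{r,i,j}$ contributes only for $\rho_\beta<\rho_\alpha$ and $\bar{B}_{\ell,\bm Z}^{r,i,j}$ only for $\rho_\beta>\rho_\alpha$, and the reduced products match, so the pairing does cancel everything. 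The only point you should make explicit is a convention: for $\ell>\lfloor\frac{n}{2}\rfloor$ the entries of $B_{\ell,\bm Z}^{r,i,j}$ and $\bar{B}_{\ell,\bm Z}^{r,i,j}$ no longer pairwise commute (a row $\rho$ and a row $n+1-\rho$ can both occur), so these ``determinants'' must be fixed, say, as the row-ordered expansions $\sum_\sigma\mathrm{sgn}(\sigma)M_{1\sigma(1)}\cdots M_{\ell\sigma(\ell)}$; with that convention the extension of \eqref{eq:dualpair2eq1} is literally what the bracket computation produces, and since only the sum applied to $|0\rangle$ is needed, nothing else in your argument is affected. Your treatment of the easy cases (duplicate column or a replacement entry commuting with everything and annihilating $|0\rangle$, the missing indicator removing the dangerous partner) and of $B_{\ell,\bm Z}\Omega|0\rangle=0$ agrees with the paper's.
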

\begin{proof}
It is clear that   \eqref{eq:dualpair2eq4} holds when either $ \alpha(r,a,i)\notin I_{\bar{\ell}_2}$ or $\beta(r,0,j)\notin I_{\bar{\ell}_1}$.
Assume now that $\alpha(r,a,i)\in I_{\bar{\ell}_2}$ and $\beta(r,0,j)\in I_{\bar{\ell}_1}$.
For $1\leq i_1,i_2,j_1,j_2\leq \ell$,
 define $B_{i_1,j_1}$ to be the determinant  obtained by deleting the $i_1$-th row and $j_1$-th column of $B_{\ell,\bm Z}$, and
  define $B_{i_1,j_1}^{i_2,j_2}$ to be the determination  obtained by deleting the $i_1$-th, $i_2$-th rows and $j_1$-th, $j_2$-th columns of $B_{\ell,\bm Z}$.
  With the above notations, we obtain
 \begin{equation*}
  \begin{split}
 &\sum_{k=1}^n\psi_i^k(a-r)\bar{\psi}_j^k(r)B_{\ell,\bm Z}|0\rangle\\
=&\sum_{k=1}^\ell\phi_{\alpha(r,a,i)-\frac{1}{2}-\epsilon}^k\bar{\phi}_{\beta(r,0,j)-\frac{1}{2}-\epsilon}^k(\sum_{s=1}^{\ell}
(-1)^{s+\beta(r,0,j)}\phi_{-\beta(r,0,j)+\frac{1}{2}-\epsilon}^{s}B_{s,\beta(r,0,j)}|0\rangle\\
=&\sum_{k=1}^{\ell}(-1)^{k+\beta(r,0,j)}\phi_{\alpha(r,a,i)-\frac{1}{2}-\epsilon}^kB_{k,\beta(r,0,j)}|0\rangle\\
 =&\sum_{k=1}^\ell(-1)^{k+\beta(r,0,j)}\phi_{\alpha(r,a,i)-\frac{1}{2}-\epsilon}^k(\sum_{s=1}^{k-1}
 (-1)^{s+\bar{\ell}_1+\alpha(r,a,i)-1}\bar{\phi}_{-\alpha(r,a,i)+\frac{1}{2}-\epsilon}^{n-s+1}
 B_{k,\beta(r,0,j)}^{s,\alpha(r,a,i)+\bar{\ell}_1}\\
 &+\sum_{s=k+1}^{\ell}(-1)^{s+\bar{\ell}_1+\alpha(r,a,i)}
 \bar{\phi}_{-\alpha(r,a,i)+\frac{1}{2}-\epsilon}^{n-s+1}
 B_{k,\beta(r,0,j)}^{s,\alpha(r,a,i)+\bar{\ell}_1})|0\rangle\\
=&\sum_{k=n-\lfloor \frac{n}{2}\rfloor+1}^{\ell}(-1)^{n+\alpha(r,a,i)+\beta(r,0,j)+\bar{\ell}_1+1}
B_{k,\beta(r,0,j)}^{n-k+1,\alpha(r,a,i)+\bar{\ell}_1}|0\rangle\\
&+\sum_{k=n-\ell+1}^{\lfloor \frac{n}{2}\rfloor}(-1)^{n+\alpha(r,a,i)+\beta(r,0,j)+\bar{\ell}_1}
B_{k,\beta(r,0,j)}^{n-k+1,\alpha(r,a,i)+\bar{\ell}_1}|0\rangle=0.\\
  \end{split}
  \end{equation*}
\end{proof}

 Similar to Lemma \ref{le:dualpair2le1} and Lemma \ref{le:dualpair2le2}, we have the following results.

\begin{lemt}\label{le:dualpair2le3}
Let $1\leq i,j\leq m$, $a\in\mathbb{Z}$ and  $1\leq \ell\leq \lfloor\frac{n}{2}\rfloor$.
Assume that either $a>0$ or $a=0$ and $\bm Z=\mathbb{Z}+\frac{1}{2}$.
Then we have
\begin{equation}
[\sum_{k=1}^{n}\psi_i^k(a-r)\psi_j^{n+1-k}(r),B_{\ell,\bm Z}]=
\begin{cases}
  C_{\ell,\bm Z}^{r,i,j}+ \bar{C}_{\ell,\bm Z}^{r,i,j}& \mbox{if } \alpha(-r,0,j)\in I_{\bar{\ell}_2},~ \alpha(r,a,i)\in I_{\bar{\ell}_2}  \\
   C_{\ell,\bm Z}^{r,i,j}& \mbox{if }   \alpha(-r,0,j)\in I_{\bar{\ell}_2},~\alpha(r,a,i)\notin I_{\bar{\ell}_2}\\
  \bar{C}_{\ell,\bm Z}^{r,i,j}& \mbox{if }   \alpha(-r,0,j)\notin I_{\bar{\ell}_2},~\alpha(r,a,i)\in I_{\bar{\ell}_2}\\
  0 & \mbox{otherwise}
\end{cases}.
\end{equation}
Here, when $\alpha(-r,0,j)\in I_{\bar{\ell}_2}$, $C_{\ell,\bm Z}^{r,i,j}$ is the determinant obtained by replacing the $(\bar{\ell}_1+\alpha(-r,0,j))$-column in $B_{\ell,\bm Z}$ by the column vector
  $$(-\phi_{\alpha(r,a,i)-\frac{1}{2}-\epsilon}^1,-\phi_{\alpha(r,a,i)-\frac{1}{2}-\epsilon}^2,\ldots,
  -\phi_{\alpha(r,a,i)-\frac{1}{2}-\epsilon}^\ell)^{t},$$
  and, when $\alpha(r,a,i)\in I_{\bar{\ell}_2}$,
  $\bar{C}_{\ell,\bm Z}^{r,i,j}$ is the determinant obtained by replacing the $(\bar{\ell}_1+\alpha(r,a,i))$-column in $B_{\ell,\bm Z}$ by the column vector
  $$(-\phi_{\alpha(-r,0,j)-\frac{1}{2}-\epsilon}^1,-\phi_{\alpha(-r,0,j)-\frac{1}{2}-\epsilon}^2,
  \ldots,-\phi
  _{\alpha(-r,0,j)-\frac{1}{2}-\epsilon}^\ell)^{t}.$$

Furthermore, for the case that $\alpha(-r,0,j)\in I_{\bar{\ell}_2}$ we have
\begin{eqnarray}
[\bar{B}_{\frac{n}{2},\bm Z}, C_{\ell^{\prime\prime},\bm Z}^{r,i,j}]=0\ (\text{with $n$ even}),\quad [B_{\ell',\bm Z}, C_{\ell,\bm Z}^{r,i,j}]=0\quad\text{and}\quad
 C_{\ell,\bm Z}^{r,i,j}|0\rangle=0,
\end{eqnarray}
and for the case that $\alpha(r,a,i)\in I_{\bar{\ell}_2}$ we have
\begin{eqnarray}
[\bar{B}_{\frac{n}{2},\bm Z}, \bar{C}_{\ell^{\prime\prime},\bm Z}^{r,i,j}]=0\ (\text{with $n$ even}),\quad
[B_{\ell',\bm Z}, \bar{C}_{\ell,\bm Z}^{r,i,j}]=0\quad\text{and}\quad
 \bar{C}_{\ell,\bm Z}^{r,i,j}|0\rangle=0,
\end{eqnarray}
where $\ell'=1,2,\dots,n$ with $\ell'+ \ell\le n$ and $\ell''<\frac{n}{2}$.
\end{lemt}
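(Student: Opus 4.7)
The plan is to adapt the proof of Lemma \ref{le:dualpair2le1} to the fact that the operator now has two $\phi$-type factors rather than one $\phi$ and one $\bar\phi$. Writing $\psi_i^k(a-r)\psi_j^{n+1-k}(r) = \phi^k_A \phi^{n+1-k}_B$ with $A = \alpha(r,a,i) - \frac{1}{2} - \epsilon$ and $B = \alpha(-r,0,j) - \frac{1}{2} - \epsilon$, I will expand the commutator with $B_{\ell,\bm{Z}}$ via the Leibniz rule, using that the only nontrivial bracket from \eqref{eq:commu} is $[\phi,\bar\phi]$. For each $k$, the factor $\phi^k_A$ can only pair with the row-$(n-k+1)$ entry at column $\bar\ell_1+\alpha(r,a,i)$ of $B_{\ell,\bm{Z}}$ (requiring $k\in\{n-\ell+1,\ldots,n\}$ and $\alpha(r,a,i)\in I_{\bar\ell_2}$), while $\phi^{n+1-k}_B$ can only pair with the row-$k$ entry at column $\bar\ell_1+\alpha(-r,0,j)$ (requiring $k\in\{1,\ldots,\ell\}$ and $\alpha(-r,0,j)\in I_{\bar\ell_2}$). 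Since $\ell\leq\lfloor n/2\rfloor$, these two ranges of $k$ are disjoint. Cofactor expansion, exactly as in Lemma \ref{le:dualpair2le1}, then reassembles the $k\in\{1,\ldots,\ell\}$ contributions into $C^{r,i,j}_{\ell,\bm{Z}}$ and the $k\in\{n-\ell+1,\ldots,n\}$ contributions into $\bar C^{r,i,j}_{\ell,\bm{Z}}$, producing the four cases of the stated formula.

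For the vanishing on the vacuum, it suffices to show $\alpha(r,a,i)\geq 1-\bar\ell_1$: then either $\alpha(r,a,i)\geq 1$ and every entry of the inserted column kills $|0\rangle$, or $\alpha(r,a,i)\in\{1-\bar\ell_1,\ldots,0\}$, in which case the inserted $\phi$-column coincides up to sign with one of the existing $\phi$-columns of $B_{\ell,\bm{Z}}$ and the determinant vanishes. The bound follows from $\alpha(r,a,i)+\alpha(-r,0,j)=(a+1-2\epsilon)m-i-j+2$ combined with the integrality of $r\in\bm{Z}$, which forces $\alpha(-r,0,j)+j\equiv 1\pmod m$ and hence $\alpha(-r,0,j)+j-1\leq m\lceil\bar\ell_2/m\rceil$; a short case analysis on the parity of $\ell_1$ gives $m\lceil\bar\ell_2/m\rceil\leq\bar\ell_1$, and the estimate holds under either hypothesis $a>0$ or ($a=0$ and $\bm{Z}=\Z+\frac{1}{2}$). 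The argument for $\bar C^{r,i,j}_{\ell,\bm{Z}}|0\rangle=0$ is symmetric, with the roles of $\alpha(r,a,i)$ and $\alpha(-r,0,j)$ interchanged.

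Finally, the commutation identities reduce to a superscript-range disjointness check: $C^{r,i,j}_{\ell,\bm{Z}}$ and $\bar C^{r,i,j}_{\ell,\bm{Z}}$ carry $\phi$-superscripts in $\{1,\ldots,\ell\}$ and $\bar\phi$-superscripts in $\{n-\ell+1,\ldots,n\}$; under $\ell'+\ell\leq n$, the analogous sets for $B_{\ell',\bm{Z}}$ are disjoint from the opposite-type sets in $C,\bar C$, so \eqref{eq:commu} forces each commutator to vanish. The $\bar B_{\frac{n}{2},\bm{Z}}$ case (with $\ell''<\frac{n}{2}$) is handled identically, noting that $\bar B_{\frac{n}{2},\bm{Z}}$ has $\phi$-superscripts in $\{1,\ldots,\frac{n}{2}-1,\frac{n}{2}+1\}$ and $\bar\phi$-superscripts in $\{\frac{n}{2},\frac{n}{2}+2,\ldots,n\}$. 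I expect the main technical step to be the integrality/modular argument in the vacuum-annihilation bound, which is finer than the direct inequality used in Lemma \ref{le:dualpair2le1}.
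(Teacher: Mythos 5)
Your proposal is correct, and it follows the same contraction-plus-cofactor strategy that the paper uses for Lemma \ref{le:dualpair2le1} (for the present lemma the paper offers no separate argument, saying only that it is ``similar to Lemma \ref{le:dualpair2le1} and Lemma \ref{le:dualpair2le2}''). The one place where you genuinely go beyond a literal transplant is the vacuum step, and your refinement there is exactly what is needed: the analogue of the inequality used in Lemma \ref{le:dualpair2le1} would only give $\alpha(r,a,i)\ge (a+1-2\epsilon)m-i-j+2-\bar\ell_2$, which for $a=0$, $\bm Z=\Z+\frac12$ can be as low as $2-m-\bar\ell_2$ and so does not reach the required threshold $1-\bar\ell_1$ whenever $\bar\ell_1-\bar\ell_2<m-1$. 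Your observation that $\alpha(-r,0,j)+j-1$ is a positive multiple of $m$ (because the coefficient of $m$ in $\alpha(-r,0,j)$ is an integer for $r\in\bm Z$), hence at most $m\lceil\bar\ell_2/m\rceil\le\bar\ell_1$ by the parity case analysis on $\ell_1$, closes this gap and yields $\alpha(r,a,i)\ge 1-\bar\ell_1$ under either hypothesis on $a$; the dichotomy ``annihilating index or repeated $\phi$-column'' is then legitimate because the hypothesis $\ell\le\lfloor\frac n2\rfloor$ makes the $\phi$-superscripts $\{1,\dots,\ell\}$ and $\bar\phi$-superscripts $\{n-\ell+1,\dots,n\}$ disjoint, so all entries of $C^{r,i,j}_{\ell,\bm Z}$ and $\bar C^{r,i,j}_{\ell,\bm Z}$ mutually commute and the determinants are alternating and unambiguous. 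The same disjointness correctly disposes of the commutation identities with $B_{\ell',\bm Z}$ ($\ell'+\ell\le n$) and $\bar B_{\frac n2,\bm Z}$ ($\ell''<\frac n2$). So your write-up is not merely ``the same proof'': it supplies a finer estimate at the one point where the cited model proof does not carry over verbatim, which is a worthwhile addition.
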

\begin{lemt}\label{le:dualpair2le4}
Let $1\leq i,j\leq m$, $a\in\mathbb{Z}$, $r\in\bm Z$ and $\lfloor\frac{n}{2}\rfloor<\ell\leq n$. Assume that either $a>0$ or $a=0$ and $\bm Z=\mathbb{Z}+\frac{1}{2}$.
Then we have
 \begin{eqnarray}
 \sum_{k=1}^{n}\psi_i^k(a-r)\psi_j^{n+1-k}(r)B_{\ell,\bm Z}|0\rangle=0.
 \end{eqnarray}
\end{lemt}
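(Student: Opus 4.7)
The argument follows the proof of Lemma \ref{le:dualpair2le2} closely, with the $\bar\psi$ factor there replaced by a second $\psi$. Under the hypothesis, for every $r\in\bm Z$ either $a-r>0$ (so $\psi_i^k(a-r)|0\rangle=0$) or $r>0$ (so $\psi_j^{n+1-k}(r)|0\rangle=0$); since the two $\psi$'s commute, their product annihilates $|0\rangle$ for every $k$. If either $\alpha(r,a,i)\notin I_{\bar\ell_2}$ or $\alpha(-r,0,j)\notin I_{\bar\ell_2}$, the corresponding $\psi$ operator has no matching entry in $B_{\ell,\bm Z}$ and hence commutes with it, so the claim reduces to $B_{\ell,\bm Z}\psi_i^k(a-r)\psi_j^{n+1-k}(r)|0\rangle=0$, which is immediate.

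Assume henceforth that $q_1:=\bar\ell_1+\alpha(r,a,i)$ and $q_2:=\bar\ell_1+\alpha(-r,0,j)$ are both valid and distinct column indices of $B_{\ell,\bm Z}$. I would perform a double Laplace expansion of $B_{\ell,\bm Z}$ along the columns $q_1$ and $q_2$:
\[
B_{\ell,\bm Z}=\sum_{p<s}(-1)^{p+s+q_1+q_2}D_{p,s}\bigl(E_{p,q_1}E_{s,q_2}-E_{p,q_2}E_{s,q_1}\bigr),
\]
where $D_{p,s}$ denotes the $(\ell-2)\times(\ell-2)$ minor of $B_{\ell,\bm Z}$ with rows $p,s$ and columns $q_1,q_2$ deleted, and each $E_{p,q_i}$ is the corresponding $\bar\phi$ entry. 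Because the removed columns eliminate their only possible non-commuting entries, both $\psi_i^k(a-r)$ and $\psi_j^{n+1-k}(r)$ commute with every $D_{p,s}$; the only nonzero commutators with the $E$'s are $[\psi_i^k(a-r),E_{p,q_1}]=-\delta_{p,n+1-k}$ and $[\psi_j^{n+1-k}(r),E_{p,q_2}]=-\delta_{p,k}$. Running the two $\psi$'s through these commutators, using $\psi_i^k(a-r)\psi_j^{n+1-k}(r)|0\rangle=0$, and noting that the scalar $\delta$-contributions telescope after summing over $k$ (via $\delta_{s,n+1-p}=\delta_{p,n+1-s}$), one obtains
\[
\sum_{k=1}^{n}\psi_i^k(a-r)\psi_j^{n+1-k}(r)B_{\ell,\bm Z}|0\rangle=\sum_{p<s}(-1)^{p+s+q_1+q_2}D_{p,s}\,R(p,s)|0\rangle,
\]
where $R(p,s):=\bigl(E_{p,q_2}\psi_j^s(r)-E_{s,q_2}\psi_j^p(r)\bigr)+\bigl(E_{s,q_1}\psi_i^p(a-r)-E_{p,q_1}\psi_i^s(a-r)\bigr)$.

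The final step is to show $R(p,s)|0\rangle=0$ for every pair $p\ne s$. Each of the two brackets of $R(p,s)$ is antisymmetric under $p\leftrightarrow s$, and each summand is of the form $\bar\psi(\alpha)\psi(-\alpha)$ (with $\alpha=-r$ in the first bracket and $\alpha=r-a$ in the second). Using $[\bar\psi_\ell(\alpha),\psi_{\ell'}(-\alpha)]=\delta_{\ell,\ell'}$ together with the hypothesis, each such product on $|0\rangle$ evaluates either to $0$ (when the $\psi$ factor kills the vacuum) or to $\delta_{p+s,n+1}|0\rangle$ (when only the $\bar\psi$ factor does); in either case the two summands within each bracket give identical scalar contributions and cancel. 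Hence $R(p,s)|0\rangle=0$.

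The main technical obstacle is the careful sign and index bookkeeping in the double Laplace expansion and the commutator calculations; the pointwise vanishing in the last step, once correctly set up, is a short verification from the canonical commutation relations. The degenerate case $q_1=q_2$ (which forces $i=j$ and $a=2r$) is handled in the same spirit via a single-column Laplace expansion.
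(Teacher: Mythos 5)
Your main computation (the case where both $q_1=\bar\ell_1+\alpha(r,a,i)$ and $q_2=\bar\ell_1+\alpha(-r,0,j)$ are genuine columns) is correct: the two-column Laplace expansion, the telescoping of the double-contraction scalars, and $R(p,s)|0\rangle=0$ all check out, and this is essentially the same cancellation mechanism the paper intends (its proof of this lemma is only indicated as ``similar to'' Lemma \ref{le:dualpair2le2}, where the cancellation is organized instead as iterated single-column expansions paired via $k\leftrightarrow n+1-k$). In fact, in this case you could shorten the last step: $\alpha(r,a,i)\ge 1$ and $\alpha(-r,0,j)\ge 1$ force $a-r>0$ and $r>0$, so all four summands of $R(p,s)$ kill $|0\rangle$ outright; your ``$\delta_{p+s,n+1}$'' alternative never occurs here, though your argument covers it anyway. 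The degenerate case $q_1=q_2$ is also correctly identified and your sketch for it works.

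The genuine gap is the preliminary reduction. If \emph{exactly one} of $\alpha(r,a,i),\alpha(-r,0,j)$ lies in $I_{\bar\ell_2}$, only the corresponding $\psi$ commutes with $B_{\ell,\bm Z}$; the other one still contracts with its column, so the claim does \emph{not} reduce to $B_{\ell,\bm Z}\psi_i^k(a-r)\psi_j^{n+1-k}(r)|0\rangle=0$. After summing over $k$, what remains is $\pm$ the determinant obtained from $B_{\ell,\bm Z}$ by replacing the surviving column by the column $(\phi^p_{\alpha-\frac12-\epsilon})_{1\le p\le\ell}$, where $\alpha$ is the out-of-range index, applied to $|0\rangle$. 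When $\alpha>\bar\ell_2$ the leftover $\psi$ is an annihilation operator commuting with everything and each term dies, but when $\alpha\le 0$ it is a creation operator and nothing is immediate; this really happens, e.g.\ $\bm Z=\Z+\tfrac12$, $a=0$, $i=j=1$, $r=\tfrac12$, $m=2$, $n=\ell=5$ (so $\bar\ell_1=3$, $\bar\ell_2=2$), where $\alpha(-r,0,j)=2\in I_{\bar\ell_2}$ but $\alpha(r,a,i)=0$ and $\psi_1^k(-\tfrac12)$ creates. The statement still holds there, but by a different mechanism: the replacing column coincides with the $(1-\alpha)$-th column of $B_{\ell,\bm Z}$, so the determinant has two equal columns of mutually commuting entries and vanishes identically. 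Moreover the inequality $\alpha\ge 1-\bar\ell_1$ needed for this duplication is not automatic; it follows from the in-range condition on the other index together with the specific shape of $\bar\ell_1,\bar\ell_2$ (there is no multiple of $m$ in the interval $[\bar\ell_1+1,\bar\ell_2+m-1]$), an estimate of exactly the kind used for $B^{r,i,j}_{\ell,\bm Z}|0\rangle=0$ and $\bar C^{r,i,j}_{\ell,\bm Z}|0\rangle=0$ in Lemmas \ref{le:dualpair2le1} and \ref{le:dualpair2le3}. You need to add this mixed case (and its mirror with the roles of the two $\psi$'s exchanged); as written, your ``which is immediate'' skips a step that requires a real argument.
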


\begin{lemt}\label{le:dualpair2le5}
Let $1\leq i,j\leq m$, $a\in\mathbb{Z}$ and $1\leq \ell\leq \lfloor\frac{n}{2}\rfloor$. Assume that either $a>0$ or $a=0$ and $\bm Z=\mathbb{Z}$.
Then we have
\begin{equation}
[\sum_{k=1}^{n}\bar{\psi}_i^k(a-r)\bar{\psi}_j^{n+1-k}(r),B_{\ell,\bm Z}]=
\begin{cases}
  D_{\ell,\bm Z}^{r,i,j}+ \bar{D}_{\ell,\bm Z}^{r,i,j}& \mbox{if } \beta(r,0,j)\in I_{\bar{\ell}_1},~\beta(-r,a,i)\in I_{\bar{\ell}_1}  \\
  D_{\ell,\bm Z}^{r,i,j}& \mbox{if }  \beta(r,0,j)\in I_{\bar{\ell}_1},~\beta(-r,a,i)\notin I_{\bar{\ell}_1}\\
  \bar{D}_{\ell,\bm Z}^{r,i,j}& \mbox{if }  \beta(r,0,j)\notin I_{\bar{\ell}_1},~\beta(-r,a,i)\in I_{\bar{\ell}_1}\\
  0 & \mbox{otherwise}
\end{cases}.
\end{equation}
Here, when $\beta(r,0,j)\in I_{\bar{\ell}_1}$, $D_{\ell,\bm Z}^{r,i,j}$ is the determinant obtained by replacing the $\beta(r,0,j)$-column in $B_{\ell,\bm Z}$ by the column vector
  $$(\bar{\phi}_{\beta(-r,a,i)-\frac{1}{2}-\epsilon}^1,\bar{\phi}_{\beta(-r,a,i)-\frac{1}{2}-\epsilon}^2,\dots,
  \bar{\phi}_{\beta(-r,a,i)-\frac{1}{2}-\epsilon}^\ell)^{t},$$
  and, when $\beta(-r,a,i)\in I_{\bar{\ell}_1}$,
  $\bar{D}_{\ell,\bm Z}^{r,i,j}$ is the determinant obtained by replacing the $\beta(-r,a,i)$-column in $B_{\ell,\bm Z}$ by the column vector
  $$(\bar{\phi}_{\beta(r,0,j)-\frac{1}{2}-\epsilon}^1,\bar{\phi}_{\beta(r,0,j)-\frac{1}{2}-\epsilon}^2,\dots,
  \bar{\phi}_{\beta(r,0,j)-\frac{1}{2}-\epsilon}^\ell)^{t}.$$

Furthermore, for the case $\beta(r,0,j)\in I_{\bar{\ell}_1}$ we have
 \begin{eqnarray}
 [\bar{B}_{\frac{n}{2},\bm Z}, D_{\ell^{\prime\prime},\bm Z}^{r,i,j}]=0\ (\text{with $n$ even}),
[B_{\ell',\bm Z}, D_{\ell,\bm Z}^{r,i,j}]=0\quad\text{and}\quad
D_{\ell,\bm Z}^{r,i,j}|0\rangle=0,
\end{eqnarray}
and for the case $\beta(-r,a,i)\in I_{\bar{\ell}_1}$ we have
\begin{eqnarray}
[\bar{B}_{\frac{n}{2},\bm Z}, D_{\ell^{\prime\prime},\bm Z}^{r,i,j}]=0\ (\text{with $n$ even}),
[B_{\ell',\bm Z}, \bar{D}_{\ell,\bm Z}^{r,i,j}]=0\quad\text{and}\quad
 \bar{D}_{\ell,\bm Z}^{r,i,j}|0\rangle=0,
\end{eqnarray}
where $\ell'=1,2,\dots,n$ with $\ell'+ \ell\leq n$ and $\ell''<\frac{n}{2}$.
\end{lemt}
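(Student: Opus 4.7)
The plan is to mirror the proofs of Lemma \ref{le:dualpair2le1} and Lemma \ref{le:dualpair2le3}, adapting them to the operator $\sum_{k=1}^n\bar\psi_i^k(a-r)\bar\psi_j^{n+1-k}(r)$. First I would rewrite this operator in $\bar\phi$-notation as
\[\sum_{k=1}^n\bar\phi^k_{\beta(-r,a,i)-\tfrac12-\epsilon}\,\bar\phi^{n+1-k}_{\beta(r,0,j)-\tfrac12-\epsilon},\]
and then invoke the commutation relations \eqref{eq:commu}. Since the $\bar\phi$'s commute among themselves and in particular with the $\bar\phi$-entries of $B_{\ell,\bm Z}$, the commutator with $B_{\ell,\bm Z}$ is produced solely by the interaction of the two $\bar\phi$ factors with the $\phi^s_*$ entries in columns $1,\dots,\bar\ell_1$ and rows $1\le s\le\ell$.

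Using $[\bar\phi^k_a,\phi^s_b]=-\delta_{k,s}\delta_{a+b,-2\epsilon}$, the factor $\bar\phi^k_{\beta(-r,a,i)-\frac12-\epsilon}$ produces (for each fixed $k\le\ell$) a single Kronecker contribution in column $\beta(-r,a,i)$, provided $\beta(-r,a,i)\in I_{\bar\ell_1}$; the factor $\bar\phi^{n+1-k}_{\beta(r,0,j)-\frac12-\epsilon}$ analogously acts in column $\beta(r,0,j)$ provided $\beta(r,0,j)\in I_{\bar\ell_1}$. Applying the Leibniz rule $[yz,D]=y[z,D]+[y,D]z$, summing over $k$, and invoking multilinearity of the determinant, the $k$-sum collapses into the determinants $D_{\ell,\bm Z}^{r,i,j}$ and $\bar D_{\ell,\bm Z}^{r,i,j}$ obtained by replacing the appropriate column by a column of $\bar\phi$'s; the four cases in the displayed formula correspond to which of the two membership conditions hold. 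The hypothesis ``either $a>0$, or $a=0$ and $\bm Z=\Z$'' rules out the normal-ordering ambiguity that would otherwise reintroduce a diagonal contribution when $\alpha+\beta=-2\epsilon$.

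The vacuum-annihilation statements $D_{\ell,\bm Z}^{r,i,j}|0\rangle=\bar D_{\ell,\bm Z}^{r,i,j}|0\rangle=0$ would be argued exactly as in Lemma \ref{le:dualpair2le1}: translating the replaced column back to $\bar\psi$-modes via the substitution rules for $\bar\phi$, the hypothesis $\beta(r,0,j)\in I_{\bar\ell_1}$ (respectively $\beta(-r,a,i)\in I_{\bar\ell_1}$) combined with the bound $a\ge 0$ forces each entry of the replaced column to be a $\bar\psi$ at a strictly positive mode, which kills $|0\rangle$. Since this column is common to every term in the Laplace expansion, the whole determinant annihilates $|0\rangle$.

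The delicate step, which I expect to be the main obstacle, is the nested commutator vanishing $[B_{\ell',\bm Z},D_{\ell,\bm Z}^{r,i,j}]=0$ (and the analogous statement with $\bar B_{n/2,\bm Z}$). The plan is to classify the non-commuting entry-pairs across the two determinants:
\begin{enumerate}
\item[(i)] a $\phi^s$ of $B_{\ell',\bm Z}$ ($s\le\ell'$) with an unchanged $\bar\phi^{n-t+1}$ of $D_{\ell,\bm Z}^{r,i,j}$ ($t\le\ell$): nontriviality forces $s+t=n+1>\ell'+\ell$, impossible;
\item[(ii)] a $\bar\phi^{n-s+1}$ of $B_{\ell',\bm Z}$ with a $\phi^t$ of $D_{\ell,\bm Z}^{r,i,j}$: same contradiction;
\item[(iii)] a $\phi^s$ of $B_{\ell',\bm Z}$ with a $\bar\phi^s$ in the \emph{replaced} column of $D_{\ell,\bm Z}^{r,i,j}$.
\end{enumerate}
Only (iii) is dangerous. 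The plan for disposing of it is to expand $D_{\ell,\bm Z}^{r,i,j}$ by Laplace expansion along the replaced column and track the commutator: each nonzero contribution produces a determinant in which the $\phi$-block of $B_{\ell',\bm Z}$ has had one of its $\phi^s$-entries replaced by a scalar while its row $s$ still contains the $\bar\phi^{n-s+1}$-entries; summing over $s$ and using antisymmetry together with the Leibniz expansion of $B_{\ell',\bm Z}\cdot D_{\ell,\bm Z}^{r,i,j}-D_{\ell,\bm Z}^{r,i,j}\cdot B_{\ell',\bm Z}$ produces two determinants which differ by a row swap, so they cancel. The computation for $\bar D$ and for $\bar B_{n/2,\bm Z}$ is entirely parallel. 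Combining all four steps yields the lemma.
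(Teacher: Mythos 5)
Your overall route --- rewriting the operator in $\bar\phi$-notation, computing the bracket with $B_{\ell,\bm Z}$ entry by entry via \eqref{eq:commu}, and checking the auxiliary bracket and vacuum statements by inspecting superscripts and modes --- is the route the paper intends (it omits the proof of this lemma as being parallel to Lemma \ref{le:dualpair2le1}). Two small corrections on the first step: by \eqref{eq:commu} the relevant contraction is $[\bar\phi^k_a,\phi^s_b]=+\delta_{k,s}\delta_{a+b,-2\epsilon}$, not $-\delta_{k,s}\delta_{a+b,-2\epsilon}$ (this is precisely why the replaced columns of $D^{r,i,j}_{\ell,\bm Z}$, $\bar D^{r,i,j}_{\ell,\bm Z}$ carry no minus signs, unlike Lemma \ref{le:dualpair2le3}), and the hypothesis on $a$ plays no role in the commutator formula itself (the two $\bar\phi$-factors commute, so there is no normal-ordering ambiguity); it is needed only for the ``furthermore'' statements. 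The genuine gap is in your vacuum-annihilation argument: it is \emph{not} true that $\beta(r,0,j)\in I_{\bar\ell_1}$ together with $a\ge 0$ forces the entries of the replaced column to annihilate $|0\rangle$. Take for instance $\bm Z=\Z$, $m=2$, $\ell=5$ (so $\bar\ell_1=3$, $\bar\ell_2=2$, $n\ge 10$), $a=0$, $i=j=1$, $r=1$: then $\beta(r,0,j)=3\in I_{\bar\ell_1}$ but $\beta(-r,a,i)=-1$, and the replaced column consists of modes $\bar\psi^{\,\cdot}_1(-1)$, i.e.\ creation operators. The correct mechanism, as in the proof of Lemma \ref{le:dualpair2le1}, is ``annihilate or duplicate'': one proves a lower bound on the mode $\beta(-r,a,i)-\frac12-\epsilon$ guaranteeing that either every entry of the new column kills $|0\rangle$, or the new column coincides with one of the original $\bar\phi$-columns of $B_{\ell,\bm Z}$ (as happens in the example, where it equals the column of mode $-\frac32-\epsilon$), so the determinant vanishes identically. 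Note also that the bound needed here is $\beta(-r,a,i)\ge 1-\bar\ell_2$, whereas the crude estimate $\beta(-r,a,i)+\beta(r,0,j)\ge 2$ only yields $\beta(-r,a,i)\ge 2-\bar\ell_1$, which is weaker since $\bar\ell_1\ge\bar\ell_2$; you must additionally use that $\beta(r,0,j)\le\bar\ell_1$ forces $r-\frac12+\epsilon\le\frac{\ell_1}{2}$ (resp.\ $\frac{\ell_1-1}{2}$) by integrality, together with the explicit form of $\bar\ell_1,\bar\ell_2$. None of this is in your plan.

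Concerning your ``delicate step (iii)'': if you carry out the contraction carefully, the column replacing column $\beta(r,0,j)$ has entry $\bar\phi^{\,n+1-s}_{\beta(-r,a,i)-\frac12-\epsilon}$ in row $s$ (for the $k$-th summand the surviving factor has superscript $k=n+1-s$), so its superscripts $n,n-1,\dots,n-\ell+1$ match the $\bar\phi$-block of $B_{\ell,\bm Z}$; the superscripts $1,\dots,\ell$ printed in the lemma are a misprint (the same issue affects the replaced columns of $\bar B$, $C$, $\bar C$ in Lemmas \ref{le:dualpair2le1} and \ref{le:dualpair2le3}). With the correct superscripts your dangerous case (iii) simply never occurs: a contraction of $\phi^s$ ($s\le\ell'$) with $\bar\phi^{\,n+1-t}$ ($t\le\ell$) would force $s+t=n+1>\ell+\ell'$, so $[B_{\ell',\bm Z},D^{r,i,j}_{\ell,\bm Z}]=0$ and the statements involving $\bar B_{\frac n2,\bm Z}$ follow at once from \eqref{eq:commu}, exactly as the paper argues in Lemma \ref{le:dualpair2le1}. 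By contrast, if one takes the printed superscripts $1,\dots,\ell$ literally, the bracket can actually be nonzero (already for $\ell=\ell'=1$ it is a nonzero scalar whenever $\beta(-r,a,i)=1$), so the row-swap cancellation you propose cannot be made to work; the fix is the corrected superscripts, not a cancellation --- and this matching of superscripts is also what makes the ``duplicate column'' alternative in the vacuum argument available.
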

\begin{lemt}\label{le:dualpair2le6}
Let $1\leq i,j\leq m$, $a\in\mathbb{Z}$ and $\lfloor\frac{n}{2}\rfloor< \ell\leq n$. Assume that either $a>0$ or $a=0$ and $\bm Z=\mathbb{Z}$. Then we have
 \begin{eqnarray}
 \sum_{k=1}^{n}\bar{\psi}_i^k(a-r)\bar{\psi}_j^{n+1-k}(r)B_{\ell,\bm Z}|0\rangle=0.
 \end{eqnarray}
\end{lemt}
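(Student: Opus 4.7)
The plan is to mimic the double cofactor-expansion argument used in the proofs of Lemma \ref{le:dualpair2le2} and Lemma \ref{le:dualpair2le4}, adapted to the present case where the summation involves two $\bar{\psi}$'s. I would first dispose of the easy cases, classified by Lemma \ref{le:dualpair2le5} according to whether $\beta(r,0,j)$ and $\beta(-r,a,i)$ belong to $I_{\bar{\ell}_1}$. If neither belongs, the operator commutes with $B_{\ell,\bm Z}$ and the expression reduces to $B_{\ell,\bm Z}\sum_{k}\bar{\psi}_i^k(a-r)\bar{\psi}_j^{n+1-k}(r)|0\rangle$; since the $\bar{\psi}$'s commute and at least one of $a-r, r$ must be non-negative under our hypothesis on $a$ and $\bm Z$, the two vectors kill the vacuum. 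If only one of the two indices lies in $I_{\bar{\ell}_1}$, one argues directly that the unmatched $\bar{\phi}$ commutes freely past the determinant to annihilate $|0\rangle$, reducing the task to $D_{\ell,\bm Z}^{r,i,j}|0\rangle=0$ (or the analogous statement with $\bar{D}$); this will follow from the fact that the relevant index then satisfies the vacuum-annihilation bound.

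In the principal case, when both $\beta(r,0,j)$ and $\beta(-r,a,i)$ lie in $I_{\bar{\ell}_1}$, I would write $\bar{\psi}_i^k(a-r)\bar{\psi}_j^{n+1-k}(r) = \bar{\phi}^k_{\beta(-r,a,i)-\frac{1}{2}-\epsilon}\,\bar{\phi}^{n+1-k}_{\beta(r,0,j)-\frac{1}{2}-\epsilon}$ and expand $B_{\ell,\bm Z}$ by cofactors along its $\beta(r,0,j)$-column. By the commutation relations \eqref{eq:commu}, the operator $\bar{\phi}^{n+1-k}$ pairs only with the $\phi^{n+1-k}$ entry at the matching index, while its contributions to the other summands commute past to the vacuum and vanish (because $\beta(r,0,j)\geq 1$). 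The expression thus reduces, for $n+1-\ell\leq k\leq \ell$ with $k\neq n+1-k$, to
\begin{equation*}
(-1)^{(n+1-k)+\beta(r,0,j)}\,\bar{\phi}^k_{\beta(-r,a,i)-\frac{1}{2}-\epsilon}\,B_{n+1-k,\beta(r,0,j)}|0\rangle.
\end{equation*}
A second cofactor expansion of $B_{n+1-k,\beta(r,0,j)}$ along its (possibly shifted) $\beta(-r,a,i)$-column, together with the analogous pairing of $\bar{\phi}^k$ with $\phi^k$, then yields a double minor $B_{n+1-k,\beta(r,0,j)}^{k,\beta(-r,a,i)}|0\rangle$ carrying an explicit sign. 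Summing over $k$ in the range $n+1-\ell\leq k \leq \ell$, each pair $(k, n+1-k)$ contributes the same double minor but with opposite signs, because the position of row $k$ inside $B_{n+1-k,\beta(r,0,j)}$ shifts by one as $k$ crosses $(n+1)/2$; the total sum therefore vanishes. When $n$ is odd the central term $k=(n+1)/2$ is automatically zero since one attempts to remove the same row twice.

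The main obstacle is the bookkeeping of the cofactor signs, particularly tracking the column shift $\beta(-r,a,i)\mapsto \beta(-r,a,i)-1$ (when $\beta(-r,a,i)>\beta(r,0,j)$) and the row-position shift $k\mapsto k-1$ (when $k>n+1-k$); both must be handled carefully to verify the promised pairwise cancellation. The hypothesis $\ell > \lfloor n/2\rfloor$ is precisely what guarantees that the summation range $\{n+1-\ell,\ldots,\ell\}$ is symmetric about $(n+1)/2$, so that every term in the sum has its cancelling partner. Once the antisymmetry of the signs under the involution $k\leftrightarrow n+1-k$ is established, the vanishing follows immediately.
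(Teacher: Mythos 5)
Your principal case (both $\beta(r,0,j)$ and $\beta(-r,a,i)$ in $I_{\bar{\ell}_1}$) is handled the same way the paper handles it: it is exactly the double cofactor expansion in the proof of Lemma \ref{le:dualpair2le2}, with the two contractions now landing in rows $n+1-k$ and $k$ of the $\phi$-block, the contributing range being $n+1-\ell\le k\le \ell$, and the $k\leftrightarrow n+1-k$ pairing producing the same double minor with opposite signs; your sign bookkeeping and the remark about the central term for $n$ odd are correct. (You should also note the degenerate subcase $\beta(r,0,j)=\beta(-r,a,i)$, possible when $i=j$ and $a=2r$, where the second contraction is blocked by the already deleted column and the term dies because the second operator kills the vacuum; this is harmless.)

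The genuine gap is in your mixed case. You claim that when exactly one of the two indices lies in $I_{\bar{\ell}_1}$, the unmatched $\bar{\phi}$ commutes past everything and annihilates $|0\rangle$, so that the problem reduces to a determinant whose replaced column kills the vacuum. That vacuum-annihilation claim is false for certain $r$: the unmatched operator can be a creation operator. Concretely, take $m=1$, $i=j=1$, $\bm Z=\mathbb{Z}$, $a=1$, $r=2$, $n=6$, $\ell=5$ (so $\bar{\ell}_1=3$, $\bar{\ell}_2=2$); then $\beta(r,0,j)=3\in I_{\bar{\ell}_1}$, $\beta(-r,a,i)=0\notin I_{\bar{\ell}_1}$, and the unmatched operator is $\bar{\psi}_1^k(-1)$, which does not kill $|0\rangle$. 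In this situation the individual $k$-terms $\bar{\psi}_1^k(-1)\bar{\psi}_1^{n+1-k}(2)B_{\ell,\bm Z}|0\rangle$ are nonzero; only the sum over $k$ vanishes, and for a different reason: after the single contraction in the $\beta(r,0,j)$-column, the sum over $k$ reassembles (cofactor expansion read backwards) into the determinant obtained from $B_{\ell,\bm Z}$ by replacing that column with the column whose $s$-th entry is $\bar{\phi}^{\,n+1-s}_{\beta(-r,a,i)-\frac{1}{2}-\epsilon}$, i.e.\ essentially $D^{r,i,j}_{\ell,\bm Z}$ of Lemma \ref{le:dualpair2le5}; when $-\bar{\ell}_2+1\le \beta(-r,a,i)\le 0$ this replaced column coincides with one of the existing $\bar{\phi}$-columns of $B_{\ell,\bm Z}$, and since all entries commute the determinant vanishes identically. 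One must further check, using the hypothesis ($a>0$, or $a=0$ and $\bm Z=\mathbb{Z}$), that $\beta(r,0,j)+\beta(-r,a,i)=(a-1+2\epsilon)m+i+j\ge 2$ together with the congruences $\beta(r,0,j)\equiv j$, $\beta(-r,a,i)\equiv i \pmod m$ rules out $\beta(-r,a,i)\le-\bar{\ell}_2$ when $\beta(r,0,j)\in I_{\bar{\ell}_1}$ (and symmetrically), since there neither the annihilation argument nor the repeated-column argument would apply. So the mixed case requires a sum-over-$k$ cancellation of its own (the repeated-column identity plus this arithmetic exclusion), not the term-by-term vanishing you propose; with that repaired, your argument is complete and coincides with the paper's intended proof.
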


The following result can be also checked directly.

\begin{lemt}\label{le:dualpair2le7}
Let $1\leq p\leq n-1$ and $1\le \ell\le n$. Then we have
\begin{equation*}
  [\sum_{k=1}^{m}\sum_{r\in\bm Z}(:\psi_k^p(-r)\bar{\psi}_k^{p+1}(r):-:\psi_k^{n-p}(-r)\bar{\psi}_k^{n+1-p}(r):),B_{\ell,\bm Z}]=0.
\end{equation*}
\end{lemt}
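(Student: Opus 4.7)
The plan is to compute the commutator via the Leibniz rule for determinants and then to exploit the row-matching that results. First, denote the operator in question by $X_p$, translate it into the $\phi,\bar\phi$-notation, and extract from the Weyl algebra relations \eqref{eq:commu} the two elementary commutators
\[
[X_p,\phi_a^s]=\delta_{s,p+1}\phi_a^p-\delta_{s,n+1-p}\phi_a^{n-p},
\qquad
[X_p,\bar\phi_a^s]=-\delta_{s,p}\bar\phi_a^{p+1}+\delta_{s,n-p}\bar\phi_a^{n+1-p}.
\]
The scalar corrections coming from normal ordering are central and drop out of these commutators.

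Next, since $B_{\ell,\bm Z}$ is the determinant of an $\ell\times\ell$ matrix whose $t$-th row has $\phi$-entries of superscript $t$ in its first $\bar\ell_1$ columns and $\bar\phi$-entries of superscript $n-t+1$ in its last $\bar\ell_2$ columns, the Leibniz rule expresses $[X_p,B_{\ell,\bm Z}]$ as a sum over $t=1,\ldots,\ell$ of determinants in which row $t$ is replaced by its commutator with $X_p$. The delta functions above force only the two row indices $t=p+1$ and $t=n+1-p$ to contribute nontrivially.

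The heart of the argument --- and the main obstacle to writing the proof cleanly --- is the verification that each surviving derived determinant has two proportional rows and hence vanishes. For $t=p+1\le\ell$ (with $p\neq n/2$), the replaced row has $\phi$-superscript $p$ and $\bar\phi$-superscript $n+1-p$, matching row $p$ of $B_{\ell,\bm Z}$ column-by-column. Analogously, for $t=n+1-p\le\ell$ (with $p\neq n/2$), the replaced row equals $-1$ times row $n-p$, whose $\phi$-superscript is $n-p$ and whose $\bar\phi$-superscript is $p+1$. In the degenerate case ($n$ even, $p=n/2$), the two candidate row indices coincide and the two delta contributions cancel within the replaced row itself, so it becomes identically zero. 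The bookkeeping that rows $p$ and $n-p$ lie in $\{1,\ldots,\ell\}$ whenever they are invoked is immediate from $1\le p\le n-1$ together with the condition $t\le\ell$. Assembling these cancellations yields $[X_p,B_{\ell,\bm Z}]=0$, as claimed.
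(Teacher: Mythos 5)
Your proof is correct, and it is exactly the kind of direct verification the paper has in mind: the paper states this lemma with the remark that it ``can be checked directly'' and supplies no argument, and your computation (the two elementary commutators from \eqref{eq:commu}, the Leibniz rule row-by-row, and the observation that each surviving determinant has a repeated row up to sign, or a zero row when $p=n/2$) is precisely that check. The bookkeeping points you flag (rows $p$ and $n-p$ lying in range, scalars from normal ordering dropping out of commutators) are all handled correctly.
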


For $\ell=1,2,\dots,n$, define  $\bm{\mu}_\ell$  and $\bm{\mu}^{\prime}_\ell$ in $\mathcal{R}(\mathrm{O}_n)$ by

\[\bm{\mu}_\ell:=(\underbrace{1,1,\ldots,1}_{\ell},0,0,\dots,0)\quad\text{and}\quad\bm{\mu}^{\prime}_\ell:
=(\underbrace{1,1,\ldots,1}_{min\{\ell,n-\ell\}},0,0,\dots,0).\]

\begin{lemt}\label{le:dualpair2le8}
For $\ell=1,2,\dots,n$ and $h\in \mathrm{H}_{\mathrm{SO}_n}$, we have
\begin{equation*}
  h.B_{\ell,\bm Z}|0\rangle=h^{\bm{\mu}^{\prime}_\ell} B_{\ell,\bm Z}|0\rangle.
\end{equation*}
\end{lemt}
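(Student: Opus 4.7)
The plan is to verify the lemma by computing directly the $\mathrm{H}_{\mathrm{SO}_n}$-weight of each monomial appearing in the expansion of the determinant $B_{\ell,\bm Z}|0\rangle$, and then showing all these weights coincide and equal $h^{\bm{\mu}'_\ell}$.

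First, I would record the $\mathrm{H}_{\mathrm{GL}_n}$-action on the $\phi^k_s$ and $\bar{\phi}^k_s$. This can be extracted directly from the proof of Lemma \ref{le:dualpair1le3} (or equivalently from the module identification \eqref{eq:glnmodiso}): for $h=\mathrm{diag}(h_1,\dots,h_n)\in \mathrm{H}_{\mathrm{GL}_n}$, one has $h.\phi^k_s=h_k\phi^k_s$ and $h.\bar{\phi}^k_s=h_k^{-1}\bar{\phi}^k_s$. Consequently $\phi^k_s$ has $\mathrm{GL}_n$-weight $\epsilon_k$ and $\bar{\phi}^k_s$ has weight $-\epsilon_k$.

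Next, I would expand the determinant $B_{\ell,\bm Z}$ as a signed sum over permutations $\sigma\in S_\ell$, with term $\mathrm{sgn}(\sigma)\prod_{p=1}^{\ell}M_{p,\sigma(p)}$. Inspecting the entries of $B_{\ell,\bm Z}$, for each $p\in\{1,\dots,\ell\}$ the entries $M_{p,1},\dots,M_{p,\bar{\ell}_1}$ involve only $\phi^p$, while the entries $M_{p,\bar{\ell}_1+1},\dots,M_{p,\bar{\ell}_1+\bar{\ell}_2}$ involve only $\bar{\phi}^{n-p+1}$. Setting $S(\sigma):=\{p\mid \sigma(p)\le \bar{\ell}_1\}$ (which has cardinality $\bar{\ell}_1$), the associated monomial has $\mathrm{GL}_n$-weight
\begin{equation*}
\sum_{p\in S(\sigma)}\epsilon_p-\sum_{p\notin S(\sigma)}\epsilon_{n+1-p}.
\end{equation*}
Thus under $h\in \mathrm{H}_{\mathrm{GL}_n}$ the monomial is scaled by $\prod_{p\in S(\sigma)}h_p\cdot \prod_{p\notin S(\sigma)}h_{n+1-p}^{-1}$.

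Now I would restrict to $h\in \mathrm{H}_{\mathrm{SO}_n}$. The defining relations for $\mathrm{O}_n$ force $h_k h_{n+1-k}=1$ for all $k$, hence $h_{n+1-p}^{-1}=h_p$. This immediately collapses the displayed product, independently of $\sigma$, to $h_1 h_2\cdots h_\ell$. Therefore every term in the expansion transforms by the same scalar and $B_{\ell,\bm Z}|0\rangle$ is a weight vector of weight $h_1\cdots h_\ell$.

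It remains to identify $h_1\cdots h_\ell$ with $h^{\bm{\mu}'_\ell}$. When $\ell\le\lfloor n/2\rfloor$ this is immediate since $\bm{\mu}'_\ell$ has $\ell$ leading ones. When $\ell>\lfloor n/2\rfloor$, I would pair indices $p$ and $n+1-p$ in the product and repeatedly apply $h_p h_{n+1-p}=1$ to cancel factors; for $n$ odd the extra constraint $h_{(n+1)/2}=1$, coming from $\det h=1$ on $\mathrm{SO}_n$, absorbs the unpaired middle index. In both parity cases this reduces $h_1\cdots h_\ell$ to $h_1\cdots h_{n-\ell}=h^{\bm{\mu}'_\ell}$, completing the proof.

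I expect no real obstacle beyond careful bookkeeping; the only mildly delicate step is the last one, where the passage from the $\mathrm{H}_{\mathrm{O}_n}$-weight $h_1\cdots h_\ell$ to $h^{\bm{\mu}'_\ell}$ relies on the $\mathrm{SO}_n$ determinant condition (in the $n$ odd case) to kill the central diagonal entry.
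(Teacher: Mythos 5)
Your proposal is correct and is in substance the same weight computation as the paper's: the paper evaluates $[e_{p,p},B_{\ell,\bm Z}]$ for the $\mathfrak{so}_n$-Cartan elements $e_{p,p}=\sum_{k,r}\big(:\psi_k^p(-r)\bar{\psi}_k^{p}(r):-:\psi_k^{n+1-p}(-r)\bar{\psi}_k^{n+1-p}(r):\big)$ and applies the result to $|0\rangle$, which is exactly your observation (phrased infinitesimally) that the $\phi^{p}$- and $\bar{\phi}^{n+1-p}$-entries in row $p$ of the determinant carry the same character of $\mathrm{H}_{\mathrm{SO}_n}$. Your final reduction of $h_1\cdots h_\ell$ to $h^{\bm{\mu}'_\ell}$ for $\ell>\lfloor\frac{n}{2}\rfloor$, using $h_ph_{n+1-p}=1$ and $h_{\frac{n+1}{2}}=1$ for odd $n$, matches the paper's case distinction $p\le \ell$, $p\le n-\ell$, so the two arguments coincide.
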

\begin{proof}
For $1\leq p \leq \lfloor \frac{n}{2}\rfloor$, as operators on $\mathcal{F}_N(\bm{Z})$, we have
 $$e_{p,p}=\sum_{k=1}^{m}\sum_{r\in\bm Z}:\psi_k^p(-r)\bar{\psi}_k^{p}(r):-:\psi_k^{n+1-p}(-r)\bar{\psi}_k^{n+1-p}(r):.$$
Using this, one can get that
   \begin{equation*}
   e_{p,p}B_{\ell,\bm Z}=
     \begin{cases}
       B_{\ell,\bm Z}e_{p,p}+B_{\ell,\bm Z} & \mbox{if } p\leq \ell\textrm{ and } p\leq n-\ell \\
       B_{\ell,\bm Z}e_{p,p} &\mbox{otherwise }
     \end{cases},
   \end{equation*}
  which implies the lemma.
\end{proof}

\begin{lemt}\label{le:dualpair2le9}
For $\ell=1,2,\dots,n$ and $x\in \widehat{\mathfrak{sp}}_m(\mathbb{C}_q)_0$, we have
\begin{equation*}
x.B_{\ell,\bm Z}|0\rangle=\eta_{\bm{\mu}_\ell,\bm{Z}}(x) B_{\ell,\bm Z}|0\rangle.
\end{equation*}
\end{lemt}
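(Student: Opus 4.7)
The plan is to argue exactly as in Lemma \ref{le:dualpair1le4}: since $\widehat{\mathfrak{sp}}_{2m}(\mathbb{C}_q)_0 = \mathbb{C}\bm{c} \oplus \bigoplus_{1 \le i \le m,\, b \in \mathbb{Z}} \mathbb{C} f_{i,i}(0,b)$ and all the $f_{i,i}(0,b)$ commute, it suffices to verify the scalar action on each generator. For the central element $\bm c$, note that under the embedding $\widehat{\mathfrak{sp}}_{2m}(\mathbb{C}_q) \hookrightarrow \widehat{\mathfrak{sp}}_{2N}(\mathbb{C}_q)$ one has $\bm c \mapsto n\bm c$, and $\rho_{\bm Z}(\bm c) = -1$; hence $\bm c$ acts as $-n$ on the whole of $\mathcal{F}_N(\bm Z)$, matching $\eta_{\bm\mu_\ell, \bm Z}(\bm c) = -n$.

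For the diagonal generators, I would split
\[
f_{i,i}(0,b).B_{\ell,\bm Z}|0\rangle = [f_{i,i}(0,b), B_{\ell,\bm Z}]|0\rangle + B_{\ell,\bm Z}\, f_{i,i}(0,b)|0\rangle
\]
and evaluate the two summands separately, using the explicit formula \eqref{eq:sp2mmodact}. A direct check of the annihilation properties of $|0\rangle$, incorporating the normal-ordering shift at $r=0$ that appears only when $\bm Z = \mathbb{Z}$, yields $f_{i,i}(0,b)|0\rangle = (\epsilon n - n\omega_{\bm Z}(b))|0\rangle$, which accounts precisely for the ``tail'' terms $\epsilon n - n\omega_{\bm Z}(b)$ in the definition of $\eta_{\bm\mu_\ell, \bm Z}(f_{i,i}(0,b))$.

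The main input is the commutator $[f_{i,i}(0,b), B_{\ell,\bm Z}]$. Because $f_{i,i}(0,b)$ is built only from diagonal quadratic expressions ${:}\psi_i^k(-r)\bar\psi_i^k(r){:}$, the relations \eqref{eq:commu} show that bracketing with any $\phi_c^k$ or $\bar\phi_c^k$ in $B_{\ell,\bm Z}$ merely rescales that entry by a scalar; no new modes are produced. By the derivation property on the determinant, each column then contributes a scalar multiple of $B_{\ell,\bm Z}$ itself: identifying entries through $\phi_{(r+\frac{1}{2}-\epsilon)m - i + \frac{1}{2}-\epsilon}^k = \psi_i^k(r)$ and $\bar\phi_{(r-\frac{1}{2}+\epsilon)m + i - \frac{1}{2}-\epsilon}^k = \bar\psi_i^k(r)$, a $\phi$-column labelled by the mode $\psi_i^k(r(j))$ contributes $q^{br(j)}$, a $\bar\phi$-column labelled by $\bar\psi_i^k(r'(j'))$ contributes $-q^{-br'(j')}$, and every column whose $\mathfrak{sp}_{2m}$-index differs from $i$ contributes nothing.

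It then remains to verify that, after the change of variable $r \mapsto -r(j)$ on the $\phi$-side and $r \mapsto r'(j')$ on the $\bar\phi$-side, the two sums collapse respectively to $\sum_{r \in J_{i,\bar\ell_1}^{\bm Z}} q^{-br}$ and $\sum_{r \in \bar J_{i,\bar\ell_2}^{\bm Z}} q^{-br}$; adding the vacuum contribution then recovers $\eta_{\bm\mu_\ell, \bm Z}(f_{i,i}(0,b))$ term-by-term. The main obstacle is exactly this combinatorial bookkeeping: the $\phi$-block and the $\bar\phi$-block of $B_{\ell,\bm Z}$ have widths $\bar\ell_1$ and $\bar\ell_2$ that swap roles according to the parity of $\ell_1$ in $\ell = \ell_1 m + \ell_2$, and one must check that this parity-dependence is precisely what produces the asymmetric pair $(\bar\ell_1,\bar\ell_2)$ appearing in the definition of $\eta_{\bm\mu_\ell,\bm Z}$, so the two sides agree exactly rather than merely up to reindexing.
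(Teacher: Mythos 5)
Your proposal is correct and amounts to essentially the same computation as the paper's proof: the paper evaluates $\sum_{k}:\psi_i^k(-r)\bar{\psi}_i^k(r):$ on $B_{\ell,\bm Z}|0\rangle$ case by case in $r$, with the cases $\beta(r,0,i)\in I_{\bar{\ell}_1}$ and $\alpha(r,0,i)\in I_{\bar{\ell}_2}$ giving exactly your two column sums over $J^{\bm Z}_{i,\bar{\ell}_1}$ and $\bar{J}^{\bm Z}_{i,\bar{\ell}_2}$. Your only reorganization is the split into commutator plus vacuum contribution, which if anything makes the constant $\epsilon n-n\omega_{\bm Z}(b)$ (the normal-ordering term) appear more transparently; there is no gap.
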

\begin{proof}
Recall that $\widehat{\mathfrak{sp}}_{2m}(\mathbb{C}_q)_0$ is spanned by the elements $\bm{c}$ and $f_{i,i}(0,b)$ for $1\le i\le m$ and $b\in \Z$.
And, as operator on $\mathcal{F}_N(\bm{Z})$, we have
$$f_{i,i}(0,b)=\sum_{r\in\bm Z}q^{-br}\sum_{k=1}^n:\psi_i^k(-r)\bar{\psi}_i^k(r):-n\omega_{\bm Z}(b).$$
Observe that
   \begin{equation*}
   \sum_{k=1}^n\psi_i^k(-r)\bar{\psi}_i^k(r)B_{\ell,\bm Z}|0\rangle=
  \begin{cases}
    B_{\ell,\bm Z}|0\rangle & \mbox{if }  \beta(r,0,i)\in I_{\bar{\ell}_1} \\
    -B_{\ell,\bm Z}|0\rangle & \mbox{if } \alpha(r,0,i) \in I_{\bar{\ell}_2} \\
    0 & \mbox{otherwise}
  \end{cases}.
  \end{equation*}
  Then  $x.B_{\ell,\bm Z}|0\rangle=\eta_{\bm{\mu}_\ell,\bm{Z}}(x) B_{\ell,\bm Z}|0\rangle$ follows immediately.
\end{proof}

   For every $\bm\mu=(\mu_1,\mu_2,\dots,\mu_n)\in \mathcal{R}(\mathrm{O}_n)$
we set
   $$v_{\bm\mu,\bm Z}:=B_{1,\bm Z}^{\mu_1-\mu_2}B_{2,\bm Z}^{\mu_2-\mu_3}\cdots B_{n,\bm Z}^{\mu_n-\mu_{n+1}}|0\rangle\in\mathcal{F}_N(\bm Z),$$
   where $\mu_{n+1}=0$.
  If $n$ is even and $d(\bm\mu)=\frac{n}{2}$,  we also set
   $$\bar{v}_{\bm\mu,\bm Z}:=B_{1,\bm Z}^{\mu_1-\mu_2}B_{2,\bm Z}^{\mu_2-\mu_3}\dots \bar{B}_{ \frac{n}{2},\bm Z}^{\mu_ \frac{n}{2}}|0\rangle\in\mathcal{F}_N(\bm Z),$$
   recalling  $\overline{\bm\mu}=(\mu_1,\mu_2,\dots,\mu_{\frac{n}{2}-1},-\mu_{\frac{n}{2}})$.

%

\begin{prpt}\label{pr:dualpair3}
Let $\bm\mu\in \mathcal{R}(\mathrm{O}_n)$.
 If $d(\bm\mu)<\frac{n}{2}$, then
\begin{itemize}
\item $v_{\bm\mu,\bm Z}$ is an  $(\mathrm{SO}_n,\widehat{\mathfrak{sp}}_{2m}(\mathbb{C}_q))$-highest weight vector in $\mathcal{F}_N(\bm Z)$ with highest weight $(\bm\mu,\eta_{\bm\mu,\bm Z})$;
    \item $v_{\tilde{\bm\mu},\bm Z}$ is an $(\mathrm{SO}_n,\widehat{\mathfrak{sp}}_{2m}(\mathbb{C}_q))$-highest weight vector in $\mathcal{F}_N(\bm Z)$ with highest weight ($\bm\mu,\eta_{\tilde{\bm\mu},\bm Z}$).
    \end{itemize}
And, if $d(\bm\mu)=\frac{n}{2}$ (and so $n$ is even), then
\begin{itemize}
\item $v_{\bm\mu,\bm Z}$ is an $(\mathrm{SO}_n,\widehat{\mathfrak{sp}}_{2m}(\mathbb{C}_q))$-highest weight vector in $\mathcal{F}_N(\bm Z)$ with
    highest weight ($\bm\mu,\eta_{\bm\mu,\bm Z}$);
    \item $\bar{v}_{\bm\mu,\bm Z}$ is an $(\mathrm{SO}_n,\widehat{\mathfrak{sp}}_{2m}(\mathbb{C}_q))$-highest weight vector in $\mathcal{F}_N(\bm Z)$ with
    highest weight  ($\overline{\bm\mu},\eta_{\bm\mu,\bm Z}$).
\end{itemize}
  \end{prpt}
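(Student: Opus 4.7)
My plan is to verify, case by case, that each candidate vector satisfies the four defining conditions of a $(\mathrm{SO}_n,\widehat{\mathfrak{sp}}_{2m}(\C_q))$-highest weight vector: annihilation by $\widehat{\mathfrak{sp}}_{2m}(\C_q)_{+,\bm Z}$, fixation by $\mathrm{N}^+_{\mathrm{SO}_n}$, the prescribed $\mathrm{H}_{\mathrm{SO}_n}$-weight, and the prescribed $\widehat{\mathfrak{sp}}_{2m}(\C_q)_0$-eigenvalue. The argument runs parallel to the proofs of Propositions \ref{pr:dualpair1} and \ref{pr:dualpair2} and uses Lemmas \ref{le:dualpair2le1}--\ref{le:dualpair2le9} essentially as a black box.

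For annihilation by $\widehat{\mathfrak{sp}}_{2m}(\C_q)_{+,\bm Z}$, I would separate the relevant generators into three families corresponding to $f_{i,j}(a,b)$, $g_{i,j}(a,b)$ and $h_{i,j}(a,b)$, restricted by the simple-root descriptions \eqref{eq:Pi3}--\eqref{eq:Pi4}. For each such generator $X$ and each factor $B_{\ell,\bm Z}$ appearing in the product, if $\ell\le\lfloor n/2\rfloor$ then Lemma \ref{le:dualpair2le1}, \ref{le:dualpair2le3}, or \ref{le:dualpair2le5} expresses $[X,B_{\ell,\bm Z}]$ as an auxiliary determinant that both kills the vacuum and commutes with every other $B_{\ell',\bm Z}$ in the product (when $\ell+\ell'\le n$) and with $\bar B_{\frac n2,\bm Z}$; if $\ell>\lfloor n/2\rfloor$, Lemma \ref{le:dualpair2le2}, \ref{le:dualpair2le4}, or \ref{le:dualpair2le6} shows $X\,B_{\ell,\bm Z}|0\rangle=0$ outright. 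Telescoping $X$ past the ordered product then yields $X.v_{\bm\mu,\bm Z}=0$. The fixation by $\mathrm{N}^+_{\mathrm{SO}_n}$ reduces by integration to checking simple-root vectors of $\mathfrak{n}^+_{\mathfrak{so}_n}$, which is exactly what Lemma \ref{le:dualpair2le7} provides.

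The weight calculations are then immediate from Lemmas \ref{le:dualpair2le8} and \ref{le:dualpair2le9} applied factor-wise. The $\mathrm{H}_{\mathrm{SO}_n}$-weight of $v_{\bm\mu,\bm Z}$ is $\sum_\ell(\mu_\ell-\mu_{\ell+1})\bm\mu'_\ell$, where $\bm\mu'_\ell=(1^{\min\{\ell,n-\ell\}},0,\ldots,0)$. A short bookkeeping step confirms this equals $\bm\mu$ when $d(\bm\mu)\le\frac n2$, and also equals $\bm\mu$ (rather than $\tilde{\bm\mu}$) in the $v_{\tilde{\bm\mu},\bm Z}$ case, since the extra factor $B_{n-d(\bm\mu),\bm Z}$ contributes the weight $(1^{d(\bm\mu)},0,\ldots)$ rather than an all-ones row; this fortuitous collapse is the reason the statement records the $\mathrm{SO}_n$-weight as $\bm\mu$ in both cases. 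The $\widehat{\mathfrak{sp}}_{2m}(\C_q)_0$-eigenvalue computation from Lemma \ref{le:dualpair2le9} likewise adds up to the correct $\eta_{\bm\mu,\bm Z}$ or $\eta_{\tilde{\bm\mu},\bm Z}$, since the two prescriptions differ precisely in the ``upper half'' versus ``lower half'' indicator.

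The main obstacle I anticipate is the case $d(\bm\mu)=\frac n2$ with $\bar v_{\bm\mu,\bm Z}$, since the modified determinant $\bar B_{\frac n2,\bm Z}$ appears in Lemmas \ref{le:dualpair2le1}--\ref{le:dualpair2le6} only as a commuting partner, not as the object being commuted with a raising operator. I would therefore need to establish direct analogues of those lemmas for $\bar B_{\frac n2,\bm Z}$. Since $\bar B_{\frac n2,\bm Z}$ is obtained from $B_{\frac n2,\bm Z}$ by interchanging its $\frac n2$-th and $(\frac n2+1)$-th rows, these analogues follow by replaying the same row-expansion/cancellation arguments (with $e_{\frac n2,\frac n2}$ and $e_{\frac n2+1,\frac n2+1}$ swapped), and this same row-swap flips the $\mathrm{H}_{\mathrm{SO}_n}$-weight contribution at $\ell=\frac n2$ from $\mu_{\frac n2}\epsilon_{\frac n2}$ to $-\mu_{\frac n2}\epsilon_{\frac n2}$, producing the $\overline{\bm\mu}$-weight demanded by the statement while leaving the $\widehat{\mathfrak{sp}}_{2m}(\C_q)_0$-eigenvalue $\eta_{\bm\mu,\bm Z}$ unchanged.
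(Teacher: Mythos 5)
Your treatment of the cases $v_{\bm\mu,\bm Z}$, $v_{\tilde{\bm\mu},\bm Z}$ and (for $d(\bm\mu)=\frac n2$) $v_{\bm\mu,\bm Z}$ is essentially the paper's proof: the same telescoping of the generators of $\widehat{\mathfrak{sp}}_{2m}(\C_q)_{+,\bm Z}$ through the ordered product via Lemmas \ref{le:dualpair2le1}--\ref{le:dualpair2le6} (your observation that all pairs of indices occurring satisfy $\ell+\ell'\le n$ is the needed and correct justification), Lemma \ref{le:dualpair2le7} for $\mathrm{N}^+_{\mathrm{SO}_n}$, and Lemmas \ref{le:dualpair2le8}, \ref{le:dualpair2le9} for the weights, including the correct bookkeeping showing that $v_{\tilde{\bm\mu},\bm Z}$ carries $\mathrm{SO}_n$-weight $\bm\mu$ because $B_{n-d(\bm\mu),\bm Z}$ contributes $\bm\mu'_{n-d(\bm\mu)}=(1^{d(\bm\mu)},0,\dots,0)$. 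Where you genuinely diverge is the vector $\bar v_{\bm\mu,\bm Z}$: you propose to re-derive analogues of Lemmas \ref{le:dualpair2le1}--\ref{le:dualpair2le6} for $\bar B_{\frac n2,\bm Z}$ by repeating the determinant expansions with the row indices $\frac n2$ and $\frac n2+1$ interchanged, whereas the paper sidesteps all of this by noting that $\bar B_{\frac n2,\bm Z}|0\rangle=\tau.B_{\frac n2,\bm Z}|0\rangle$ for the element $\tau\in\mathrm{O}_n\setminus\mathrm{SO}_n$, and then transporting every property from $B_{\frac n2,\bm Z}|0\rangle$ using the fact (already available from Proposition \ref{prop:dualpairs} and the locally regular $(\mathrm{O}_n,\widehat{\mathfrak{sp}}_{2m}(\C_q))$-structure) that $\tau$ commutes with the $\widehat{\mathfrak{sp}}_{2m}(\C_q)$-action: this gives annihilation by $\widehat{\mathfrak{sp}}_{2m}(\C_q)_{+,\bm Z}$ and the eigenvalue $\eta_{\bm\mu,\bm Z}$ for free, while $h.\tau.v=\tau.(\tau^{-1}h\tau).v$ produces exactly the twisted Cartan weight $\overline{\bm\mu}$. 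Your route is workable in principle (the operators \eqref{eq:sp2mmodact} are symmetric in the superscript $k$, and the vacuum-annihilation of the auxiliary determinants depends only on the mode subscripts, which the row swap does not change), but it is substantially longer and leaves some items you do not spell out, in particular an analogue of Lemma \ref{le:dualpair2le7} giving $\mathrm{N}^+_{\mathrm{SO}_n}$-fixation of vectors built from $\bar B_{\frac n2,\bm Z}$, and the commutation of the new auxiliary determinants with the remaining factors $B_{\ell',\bm Z}$; the paper's $\tau$-conjugation obtains all of these at once, which is precisely what that trick buys.
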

  \begin{proof} Note that the positive parts   $\widehat{\mathfrak{sp}}_{2m}(\mathbb{C}_q)_{+,\Z}$ and
   $\widehat{\mathfrak{sp}}_{2m}(\mathbb{C}_q)_{+,\Z+\frac{1}{2}}$ of $\widehat{\mathfrak{sp}}_{2m}(\mathbb{C}_q)$ are different.
And,
  the action of $\widehat{\mathfrak{sp}}_{2m}(\mathbb{C}_q)_{+,\bm{Z}}$  on $\mathcal{F}_N(\bm{Z})$ is as follows:
\begin{itemize}
\item
$f_{i,j}t_0^at_1^b=\sum_{r\in\bm Z}q^{-br}\left(\sum_{k=1}^n:\psi_i^k(a-r)\bar{\psi}_j^k(r):\right)
$ for $1\le i,j\le m$ and $a,b\in \Z$ such that either $a>0$ or $a=0$ and $i<j$;
\item
$g_{i,j}t_0^at_1^b=\sum_{r\in\bm Z}q^{-br}\left(\sum_{k=1}^n:\psi_i^k(a-r)\psi_j^{n+1-k}(r):\right)$ for $1\le i,j\le m$ and $a,b\in \Z$ such that either $a>0$ or $a=0$ and $\bm Z=\mathbb{Z}+\frac{1}{2}$;
\item
$h_{i,j}t_0^at_1^b=\sum_{r\in\bm Z}q^{-br}\left(\sum_{k=1}^n:\bar{\psi}_i^k(a-r)\bar{\psi}_j^{n+1-k}(r):\right)
$
for $1\le i,j\le m$ and $a,b\in \Z$ such that either $a>0$ or $a=0$ and $\bm Z=\mathbb{Z}$.
\end{itemize}

  If $\bm\mu\in \mathcal{R}(\mathrm{O}_n)$ with $d(\bm\mu)<\frac{n}{2}$, then we have
  $$v_{\tilde{\bm\mu},\bm Z}=B_{1,\bm Z}^{\mu_1-\mu_2}B_{2,\bm Z}^{\mu_2-\mu_3}\cdots B_{p,\bm Z}^{\mu_p-1}B_{n-d(\bm\mu),\bm Z}|0\rangle.$$
From Lemmas \ref{le:dualpair2le1}-\ref{le:dualpair2le6}, one can conclude that
  \[\widehat{\mathfrak{sp}}_{2m}(\mathbb{C}_q)_{+,\bm Z}v_{\bm\mu,\bm Z}=0\quad\textrm{and}\quad\widehat{\mathfrak{sp}}_{2m}(\mathbb{C}_q)_{+,\bm Z}v_{\tilde{\bm\mu},\bm Z}=0. \]
Meanwhile, it follows from Lemmas \ref{le:dualpair2le7},  \ref{le:dualpair2le8} and \ref{le:dualpair2le9}  that
 \begin{eqnarray*}
 && \mathrm{N}_{\mathrm{SO}_n}^{+}v_{\bm\mu,\bm Z}=v_{\bm\mu,\bm Z},\quad \mathrm{N}_{\mathrm{SO}_n}^{+}v_{\tilde{\bm\mu},\bm Z}=v_{\tilde{\bm\mu},\bm Z},\quad
x.v_{\bm\mu,\bm Z}=\eta_{\bm\mu,\bm Z}(x)v_{\bm\mu,\bm Z},\\
&& h.v_{\bm\mu,\bm Z}=h^{\bm\mu}v_{\bm\mu,\bm Z},\quad x.v_{\tilde{\bm\mu},\bm Z}=\eta_{\tilde{\bm\mu},\bm Z}(x)v_{\bm\mu,\bm Z},\quad h.v_{\tilde{\bm\mu},\bm Z}=h^{\tilde{\bm\mu}}v_{\bm\mu,\bm Z}
  \end{eqnarray*}
  for $x\in \widehat{\mathfrak{sp}}_{2m}(\mathbb{C}_q)_0$ and $h\in \mathrm{H}_{\mathrm{SO}_n}$.
  This proves the first two assertions in proposition.

    If $n$ is even and $\bm\mu\in \mathcal{R}(\mathrm{O}_n)$ with $d(\bm\mu)=\frac{n}{2}$,
    then it follows from Lemmas \ref{le:dualpair2le1}-\ref{le:dualpair2le9} that $v_{\bm\mu,\bm Z}$ is an $(\mathrm{SO}_n,\widehat{\mathfrak{sp}}_{2m}(\mathbb{C}_q))$-highest weight vector in $\mathcal{F}_N(\bm Z)$ with highest weight ($\bm\mu,\eta_{\bm\mu,\bm Z}$). In particular, $B_{\frac{n}{2},\bm Z}|0\rangle$ is an $(\mathrm{SO}_n,\widehat{\mathfrak{sp}}_{2m}(\mathbb{C}_q))$-highest weight vector with highest weight ($\bm\mu_{\frac{n}{2}},\eta_{\bm\mu_{\frac{n}{2}},\bm Z}$).
Note that from \eqref{eq:glnmodiso} we have
  $\tau.B_{\frac{n}{2},\bm Z}|0\rangle=\bar{B}_{\frac{n}{2},\bm Z}|0\rangle$,
   recalling that $\tau$ denotes the $n\times n$-matrix which interchanges the $\frac{n}{2}$-th row and $(\frac{n}{2}+1)$-th row of the identity matrix $I_n$.
     Then by Proposition \ref{prop:dualpairs} we have
  \begin{eqnarray*}&&y.\bar{B}_{\frac{n}{2},\bm Z}|0\rangle=y.\tau.B_{\frac{n}{2},\bm Z}|0\rangle=\tau.y.B_{\frac{n}{2},\bm Z}|0\rangle=0,\\
 &&x.\bar{B}_{\frac{n}{2},\bm Z}|0\rangle=x\tau.B_{\frac{n}{2},\bm Z}|0\rangle=\tau x.B_{\frac{n}{2},\bm Z}|0\rangle=\eta_{\bm\mu, \bm Z}(x)\bar{B}_{\frac{n}{2},\bm Z}|0\rangle,\\
 &&h.\bar{B}_{\frac{n}{2},\bm Z}|0\rangle=h\tau.B_{\frac{n}{2},\bm Z}|0\rangle=\tau(\tau^{-1}h\tau).B_{\frac{n}{2},\bm Z}|0\rangle=(\tau^{-1}h\tau)^{\bm\mu}\bar{B}_{\frac{n}{2},\bm Z}|0\rangle=(h)^{\overline{\bm\mu}}\bar{B}_{\frac{n}{2},\bm Z}|0\rangle
 \end{eqnarray*}
  for $y\in \widehat{\mathfrak{sp}}_{2m}(\mathbb{C}_q)_{+,\bm Z}$, $x\in \widehat{\mathfrak{sp}}_{2m}(\mathbb{C}_q)_0$ and $h\in\mathrm{H}_{\mathrm{SO}_n}$. This together with Lemmas \ref{le:dualpair2le1}-\ref{le:dualpair2le9} implies that $\bar{v}_{\bm\mu,\bm Z}$ is an $(\mathrm{SO}_n,\widehat{\mathfrak{sp}}_{2m}(\mathbb{C}_q))$-highest weight vector with highest weight ($\overline{\bm\mu},\eta_{\bm\mu,\bm Z}$). Thus we complete the proof.

\end{proof}

\textbf{Proof of Theorem \ref{thm:main}:}
In view of Proposition \ref{prop:howedual}, we have the decomposition
\begin{eqnarray*}
\mathcal{F}_N(\bm{Z})=\bigoplus_{\bm{\mu}\in \mathrm{Spec}_{\mathrm{G}}(\mathcal{F}_N(\bm{Z}))} L_{\mathrm{G}}(\bm\mu)\otimes
 \mathrm{Hom}_{\mathrm{G}}(L_{\mathrm{G}}(\bm\mu),\mathcal{F}_N(\bm{Z})),
\end{eqnarray*}
where  $\mathrm{Spec}_{\mathrm{G}}(\mathcal{F}_N(\bm{Z}))\subset \mathcal{R}(\mathrm{G})$
denotes the  spectrum of $\mathcal{F}_N(\bm{Z})$ as a locally regular $\mathrm{G}$-module.
From Propositions \ref{pr:dualpair1}, \ref{pr:dualpair2} and \ref{pr:dualpair3}, one can conclude that
that $\mathrm{Spec}_{\mathrm{G}}(\mathcal{F}_N(\bm{Z}))=\mathcal{R}(\mathrm{G})$.
 Furthermore, since
each multiplicity space $\mathrm{Hom}_{\mathrm{G}}(L_{\mathrm{G}}(\bm\mu),\mathcal{F}_N(\bm{Z}))$ is irreducible as a $\mathfrak{g}$-module,
we have $\mathrm{Hom}_{\mathrm{G}}(L_{\mathrm{G}}(\bm\mu),\mathcal{F}_N(\bm{Z}))=L_{\mathfrak{g},\bm{Z}}(\eta_{\bm{\mu},\bm{Z}})$.
This completes the proof of Theorem \ref{thm:main}.


\begin{thebibliography}{AAGBP}
\bibitem[BGK]{BGK}
S. Berman, Y. Gao, and Y. Krylyuk, Quantum tori and the structure of elliptic quasisimple Lie algebras, J. Funct. Anal. $\bf{135}$ (1996) 339-389.

\bibitem[CHT]{CHT} F. Chen, X. Huang and S. Tan,
Orthogonal toroidal Lie algebras, vertex algebras and skew Howe duality, {\em J. Lie Theory} {\bf 32} (2022) 301-312.

\bibitem[CLTW]{CLTW} F. Chen, H.-S. Li, S. Tan and Q. Wang, Extended affine Lie algebras,  vertex algebras, and reductive groups,
arXiv:2004.02821.

\bibitem[CG]{CG}
H. Chen, Y. Gao, $BC_N$-graded Lie algebras arising from fermionic representations. {\em J. Alg.} {\bf 308} (2007) 545-566.


\bibitem[FF]{FF}
A. Feingold, I. Frenkel, Classical affine algebras, {\em Adv.  Math.} {\bf 56} (1985) 117-172.


\bibitem[FKRW]{FKRW}
E. Frenkel, V. Kac, A. Radul and W. Wang, $W_{1+\infty}$ and $W(gl_N)$ with central charge $N$,
{\em Comm. Math. Phys.} {\bf 170} (1995) 337-357.



\bibitem[GW]{GW} R. Goodman, R. Wallach,
{\em Symmetry, Representations, and Invariants,} Graduate Texts in Mathematics, 255.
\bibitem[H1]{H}
R. Howe, $\theta$-series and invariant theory, {\em Proceedings of Symposium on Pure Mathematics}, Vol. 33, AMS Providence, (1979), 275–285.
\bibitem[H2]{H1}
R. Howe, Remarks on classical invariant theory, {\em Trans. Amer. Math. Soc.} {\bf 313} (1989) 539-570.
\bibitem[H3]{H2}
R. Howe, Perspectives on invariant theory: Schur duality, multiplicity-free actions and beyond,
Israel Math. Conf. Proc., 8 (1992), 1-182.

\bibitem[KR]{KR} V. Kac, A. Radul, Representation theory of the vertex algebra $W_{1+\infty}$,
{\em Transf. Groups} {\bf 1} (1996) 41-70.

\bibitem[KWY]{KWY} V. Kac, W. Wang and C. H. Yan, Quasifinite representations of classical
Lie subalgebras of $W_{1+\infty}$, {\em Adv. Math.} {\bf 139} (1998) 56-140.


\bibitem[W1]{W1}
W. Wang, Dual pairs and tensor categories of modules over the Lie algebras $\wh{gl}_\infty$ and $W_{1+\infty}$,
{\em J. Alg.}  {\bf 216} (1999) 159-177.
\bibitem[W2]{W2}
W. Wang, Duality in infinite dimensional Fock representations, {\em Comm. Contem. Math.} {\bf 1}
(1999) 155-199.

\end{thebibliography}
\end{document}